\documentclass[12pt]{article}

\def \qed{\hfill$\square$}

\def\eps{{\varepsilon}}

\makeatother

\usepackage{xcolor}

\usepackage[margin=3cm]{geometry}

\usepackage[normalem]{ulem}
\usepackage{amsthm, bbm}
\usepackage{amsmath, amssymb}
\usepackage{mathrsfs}
\usepackage{hyperref}
\usepackage{graphicx, psfrag, epstopdf}
\graphicspath{{fig/}}
\usepackage{float}
\usepackage{wrapfig}
\usepackage{xcolor}
\usepackage{changepage}

\theoremstyle{definition}
\newtheorem{definition}{Definition}[section]
\newtheorem{theorem}[definition]{Theorem}
\newtheorem{lemma}[definition]{Lemma}

\newtheorem{corollary}[definition]{Corollary}
\newtheorem{proposition}[definition]{Proposition}

\newtheorem{remark}[definition]{Remark}
\newtheorem*{remark*}{Remark}

\numberwithin{equation}{section}

\definecolor{OliveGreen}{rgb}{0,0.6,0}

\def\R{\mathbb{R}}
\def\Z{\mathbb{Z}}
\def\T{\mathbb{T}}
\def\N{\mathbb{N}}

\newcommand{\lp}{log-Puiseux }
\newcommand{\mm}{{\mathcal M}}
\DeclareMathOperator{\intt}{int}

\theoremstyle{definition}
\newtheorem*{inner}{\innerheader}
\newcommand{\innerheader}{}
\newenvironment{namedtheorem}[1]
 {\renewcommand\innerheader{#1}\begin{inner}}
 {\end{inner}}

\title{Weak mixing for area preserving flows on surfaces}
\author{A. Kanigowski, A. Okunev, R. Zelada}
\date{}

\begin{document}
\maketitle
\begin{abstract}
Let $(\phi_t)$ be an area-preserving smooth flow on a compact, connected, orientable surface $\mathcal M$ with at least one but finitely many fixed points. Assume that $(\phi_t)$ is analytic (up to a canonical change of coordinates) in the neighborhood of each saddle fixed point. We show that the flow $(\phi_t)$ is weakly mixing on each of its (finitely many) quasi-minimal components.

 \end{abstract}
\tableofcontents
\section{Introduction}
Smooth area preserving flows on compact, connected orientable surfaces are one of the most basic classes of dynamical systems. They provide the lowest dimensional class of systems with non trivial ergodic, mixing, and spectral properties. These properties have been studied extensively in the last $50$ years. 
They differ substantially for flows with and without fixed points. If the flow has no fixed points, the Poincar\'e-Hopf index theorem implies that the surface is the $2$-torus and the flow is a {\em reparametrization} (or time change) of a linear flow. Kolmogorov, \cite{Kolg}, showed that under the additional assumption that the  frequency is diophantine, the reparametrization is smoothly conjugated to the original linear flow. On the other hand, Sklover, \cite{Sklo}, showed that there are Liouvillian frequencies for which the corresponding reparametrization is weakly mixing. It follows from a result by Katok, \cite{Kat}, that smooth flows (with no fixed points) on the torus are never mixing.

Mixing properties of flows on surfaces change in the presence of fixed points. Let us now assume that the set of fixed points is non-empty and finite. The two main cases one distinguishes here are non-degenerate and degenerate fixed points (depending on whether the Hessian at a fixed point vanishes or not). Historically, the first result on mixing of surface flows is due to Kochergin, \cite{Koch1},  who showed that on every compact, connected, orientable surface of genus $\geq 1$ there exists a smooth flow which is mixing (these flows are now called Kochergin flows). The examples in \cite{Koch1} were smooth flows with degenerate fixed points. The case of general degenerate saddles\footnote{Given an area-preserving smooth flow  $(\phi_t)$  on a compact, connected, orientable smooth surface $\mathcal M$, a point $p\in\mathcal M$ is called a \textit{saddle} if $p$ is fixed by $(\phi_t)$ and the local Hamiltonian defining $(\phi_t)$ near $p$ does not achieve a local maximum or minimum at $p$.} has not yet been systematically studied and in this paper we address this situation in the analytic case. 

In the case where all the fixed points are non-degenerate, the situation  has been widely studied 
and shown to be quite delicate. First, it was shown by Khanin and Sinai, \cite{KS}, that a flow on $\T^2$  with one homoclinic loop (see Figure~\ref{Fig1}) is mixing (on the transitive component) for almost every frequency $\alpha$. This result was improved by Kochergin to all irrational $\alpha$, \cite{Koch2}. When the genus is $\geq 2$ and all the fixed points are non-degenerate, the main mixing results for ``typical'' flows were obtained by Ulcigrai. Indeed, in \cite{Ulc1} Ulcigrai showed that almost every\footnote{As we explain later, these flows (on quasi-minimal components) can be represented as special flows over IET's (Poincar\'e first return map) with roof function (Poincar\'e first return time) that is smooth except at finitely many points. Almost every refers to a.e. IET with respect to Lebesgue measure.} flow with so called symmetric singularities is not mixing. It was shown in \cite{Ulc} that almost every such flow is weakly mixing.

\begin{wrapfigure}{r}{0.4\textwidth}
 \centering
 \resizebox{!}{5.4cm} {\includegraphics[angle=1]{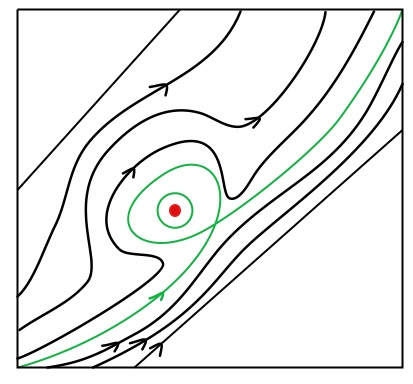}}
  \caption{\small Homoclinic loop of a locally Hamiltonian flow. Figure taken from \cite{FKZ}.}\label{Fig1}
 \label{FigSaddle}
\end{wrapfigure} 
If the singularities are asymmetric, then  it was shown in \cite{Ulc2} that almost every such flow (with one singularity) is mixing. This was improved by Ravotti, \cite{Ravotti}, who showed mixing for almost every such flow with any number of singularities (at a subset of fixed points).
On the other hand, minimal smooth flows that are mixing were first constructed by Chaika-Wright, \cite{ChW}, where the authors showed existence of a flow with symmetric singularities on a genus $5$ surface which is mixing complementing the result of Ulcigrai, \cite{Ulc1}. Recently in \cite{FKZ} Fayad and the first and third author constructed a smooth flow with asymmetric singularities on a genus $2$ surface which is not mixing  complementing the resullts  in \cite{Ulc2},\cite{Ravotti}. Some finer ergodic, mixing, and spectral properties of smooth flows on surfaces were studied in \cite{FKU,KLU,FFK,FaKa}.

One of the main tools to study smooth flows on surfaces and also the related translation flows is {\em renormalization}, i.e. a rich dynamics on the space of all $d$-IET's (which appear as Poincar\'e section for smooth surface flows). This is the main reason why most of the results are proven for almost every flow: one imposes some good behavior of the orbit under renormalization  of an arbitrary $d$-IET which allows to prove a given result and then shows that this good behavior holds on a full measure set of $d$-IET's. 
Using renormalization, Avila and Forni, \cite{AF}, showed that almost every translation flow is weakly mixing (this also holds for a.e. IET).
On the other hand there are very few non-trivial results that hold for {\em every} smooth (or translation) flow. One of the very few theorems of this kind is due to Katok, \cite{Kat}, who showed that no translation flow is mixing. Our main result is a statement about \emph{every smooth surface flow} (under the extra assumption of local analyticity near saddles). We say that a flow is {\em analytic} at a point $p$ if there exists a neighborhood of $p$ and an area-preserving $C^\infty$ change of coordinates for which the flow is analytic in these new coordinates.

\begin{namedtheorem}{Theorem A}
Let $(\phi_t)_{t\in\R}$ be a $C^\infty$ area-preserving flow, which has at least one, but finitely many fixed points, and is analytic at each saddle fixed point. Then $(\phi_t)_{t\in\R}$ is weakly mixing on any of its quasi-minimal components.
\end{namedtheorem}

\noindent In the case where all fixed points are non-degenerate, the above result improves \cite{Ulc} from almost every flow to every flow.\footnote{If the flow is only $C^\infty$ but all its saddles are nondegenerate, Theorem~A$^\prime$ below can still be applied. Indeed, it follows from Morse lemma that every non-degenerate saddle has logarithmic passing time.} The above result is also the first systematic result in the case of degenerate fixed points as Kochergin, \cite{Koch1}, only  studied a class of examples. In fact, using analyticity we are able to estimate the first return time and show that it is always logarithmic, power or a product of those.

As we will see, Theorem A is a direct consequence of the following result for special flows.
\begin{namedtheorem}{Theorem A$^\prime$}
Consider the special flow $T^f$ over a right-continuous IET $T:[0,1)\rightarrow [0,1)$ and under a roof function $f$. 
Assume that there exists a finite set $\mathcal A_f \subset (0, 1)$ such that $f$ is $C^2$ on $(0, 1) \setminus \mathcal A_f$, and $f$ has an infinite singularity at each $x \in \mathcal {A}_f$: 
$\lim_{y \to x^-} f(y) = \lim_{y \to x^+} f(y) = \infty$. 
Suppose that additionally
\begin{enumerate}
  \renewcommand{\labelenumi}{(\roman{enumi})}
    \item Every discontinuity of $T$ belongs to $\mathcal A_f$.

    \item  $f$ is of \textit{at least logarithmic growth}: there are $B,C>0$ such that 
    \begin{equation} \label{e:log-growth-intro}
    f''(x)\geq \frac{C}{\min_{y\in \mathcal A_f}|x-y|^2}-B
    \end{equation}
    for every $x\in (0,1)\setminus\mathcal A_f$. 
\end{enumerate}
Then $T$ is ergodic if and only if $T^f$ is weak mixing.  
\end{namedtheorem}
\begin{remark}
In Section \ref{s:analytic-singularities}, we show that if $(\phi_t)$ is analytic near all its saddle fixed points (and has at least one fixed point), then on each quasi-minimal component the return time function satisfies \eqref{e:log-growth-intro}. Thus, Theorem A follows from Theorem A$^\prime$.
\end{remark}

\paragraph{Outline of the proof of Theorem A.}
The proof of Theorem A consists of two main parts. In the first part (Sections 4-5), we reduce Theorem A to Theorem A$^\prime$.
By a classical decomposition result~\cite{Mai, Kat73, NZ}, the surface $\mathcal M$ splits into finitely many periodic and quasi-minimal components. 
Consider now the restriction $\phi_t|_{\mm_i}$ of the flow $(\phi_t)$ to any of the quasi-minimal components $\mm_i$. 
We use the standard technique called \emph{special flow representation:} $\phi_t|_{\mm_i}$ can be represented (i.e. there exists a measurable conjugacy) as a special flow $T^f$ over an IET $T$ and under a roof function $f$ that is smooth everywhere in the domain of $T$ except for finitely many points.
While it is well-known that $\phi_t|_{\mm_i}$ can be represented as a special flow over an IET, we couldn’t find this fact in the literature in the required generality. We provide a full proof of this fact  in Section~\ref{s:flows2} and, additionally, show that one can pick  $T^f$ to satisfy property~\emph{(i)} from Theorem~A$^\prime$ (an observation which we could also not find in the literature). We then use resolution of analytic singularities~\cite{Hir} to obtain the asymptotics of the time it takes for a trajectory of $(\phi_t)$ to pass a small neighborhood of a saddle (Section~\ref{s:analytic-singularities}). This proves property~\emph{(ii)}, and thus Theorem~A is reduced to Theorem~A$^\prime$.

In the second part of the proof (Sections 6-7), our goal is to show that $\phi_t|_{\mm_i}$  possesses an  abundant class of ``nice" special flow representations which, so to say, ``prevents" the existence of non-trivial $\phi_t|_{\mm_i}$-eigenfunctions. Existence of such an abundant class for a \textit{typical} area-preserving smooth flow was established in \cite{Ulc} by employing renormalization. However, as we mentioned above,  a similar result cannot be achieved (or, at the very least, easily achieved) for \textit{every} area-preserving smooth flow by simply employing renormalization.
The key observation to overcome this difficulty is to note that, roughly speaking, if every discontinuity of $T_\mathcal I$, the IET induced by $T$ on some subinterval $\mathcal I$ of the domain of $T$, has the property that its $T$-forward orbit goes through a discontinuity of $T$ before returning to $\mathcal I$, then the special flow induced by $T^f$ on $\mathcal I$ (which also is a special flow representation of  $\phi_t|_{\mm_i}$) is nice (i.e. inherits the properties~\emph{(i)–(ii)} of $T^f$ mentioned in Theorem A$^\prime$). Combining this observation with the classical fact that the forward orbits of the discontinuities of an aperiodic IET are dense in the domain of the IET, one obtains that there exist arbitrarily fine partitions of the domain of $T$ such that the special flow induced by $T^f$ in any of the members of any of these partitions satisfies~\emph{(i)–(ii)}.  One can then show 
 that the class formed by the special flow representations of $\phi_t|_{\mm_i}$ satisfying (i) and (ii) does indeed "prevent" the existence of non-trivial $\phi_t|_{\mm_i}$-eigenfunctions.

\section{ Background on interval exchange transformations}
In this section we review various results dealing with interval exchange transformations (IETs) needed for the arguments employed in the coming sections.\\
A  (right-continuous) interval exchange transformation $T:[a,b)\rightarrow [a,b)$, $a<b$, is a bijective map for which there exist an $N\geq 1$ and numbers $a_0=a<a_1<\cdots<a_N=b$ and $\sigma_1,...,\sigma_N\in \R$ such that for any $x\in [a_{i-1},a_{i})$, $Tx=x+\sigma_{i}$ for each $i\in\{1,...,N\}$.
Observe that a map $T$ defined in this way has at most $N-1$ discontinuities and preserves the Lebesgue measure.\\
For any right closed and left open interval $\Delta\subseteq [a,b)$, we let
$h_{\Delta,T}:\Delta\rightarrow\N$ be defined by
\begin{equation}\label{eq:FirstReturnTime}
    h_{\Delta,T}(x)=\inf\{n\in\N=\{1,2,...\}\,|\,T^nx\in \Delta\}.
\end{equation}
(Note that because $T$ preserves the Lebesgue measure, the  Poincar{\'e} recurrence Theorem and the fact that $T$ is right-continuous imply that  $h_{\Delta,T}$ is well-defined.)\\
Denote by $T_\Delta$ the first return map induced by $T$ on $\Delta$. In other words, for each $x\in \Delta$, $T_\Delta x=T^{h_{\Delta,T}(x)}x$. We denote the set of all the discontinuities of $T_\Delta$ by $\chi_\Delta$.
\subsection{Induced IETs and their discontinuities}
The following observation due to Katok~\cite[Proof of Lemma 2]{Kat} is needed for the proof of Theorems \ref{thm:RegularDisc}  below. It provides an upper bound for the number of discontinuities of an induced IET (see \cite[p. 128]{CornfeldErgodicTheory} for the correct upper bound) and, perhaps more importantly, it provides a qualitative description of the relationship between an IET and an IET induced by it.
\begin{proposition}\label{prop:TypesOfDiscontinuities}
    Let $N>1$, let $T:[0,1)\rightarrow[0,1)$ be a right-continuous IET with $N-1$ discontinuities, let $\Delta=[a,b)\subseteq [0,1)$, and let  $T_\Delta$ be the map induced by $T$ on $\Delta$. If $\alpha\in \chi_\Delta$ is a discontinuity,  then (a) $T^i\alpha\not\in [a,b)$ for $i\in\{1,...,h_{\Delta,T}(\alpha)-1\}$, (b) $T^{h_{\Delta,T}(\alpha)}\alpha=T_\Delta \alpha$, and (c) at least one of the following holds:
    \begin{enumerate}
    \item [(c.1)] For some $i\in\{0,...,h_{\Delta,T}(\alpha)-1\}$, $T^i\alpha$ is a discontinuity of $T$.
    \item [(c.2)] $T^{h_{\Delta,T}(\alpha)}\alpha=a$.
    \item [(c.3)] There is an  $i\in\{1,...,h_{\Delta,T}(\alpha)-1\}$, such that $\lim_{x\rightarrow \alpha^-}T^ix=b$.
    \end{enumerate}
\end{proposition}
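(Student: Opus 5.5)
Parts (a) and (b) follow directly from the definitions of $h_{\Delta,T}$ and $T_\Delta$: $h=h_{\Delta,T}(\alpha)$ is the first $n\ge 1$ with $T^n\alpha\in\Delta$, so $T^i\alpha\notin[a,b)$ for $1\le i<h$, and $T_\Delta\alpha=T^{h}\alpha$ by definition. The substance is (c). I will argue by contradiction: assume none of (c.1), (c.2), (c.3) holds, and show that $T_\Delta$ is continuous at $\alpha$, contradicting $\alpha\in\chi_\Delta$. Right-continuity of $T_\Delta$ is automatic for IETs, so I only need to produce a left neighbourhood $(\alpha-\eps,\alpha]$ on which $T_\Delta x=T_\Delta\alpha+(x-\alpha)$. (If $\alpha=a$ it is not an interior discontinuity, so I may assume $a<\alpha<b$.)

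\textbf{Step 1 (translation on a left neighbourhood).} Since (c.1) fails, none of $\alpha,T\alpha,\dots,T^{h-1}\alpha$ is a discontinuity of $T$. The discontinuities are finitely many and $T$ is a translation on each half-open piece, so by induction on $i\le h$ there is $\eps>0$ such that for every $x\in(\alpha-\eps,\alpha]$ and every $0\le i\le h$ we have $T^ix=T^i\alpha+(x-\alpha)$. Indeed, if $T^i\alpha$ is not a discontinuity, then $T^i\alpha$ lies in the interior of a piece or is its left endpoint $a_{j-1}$ with $j\ge 2$. The latter is impossible since $a_{j-1}$ would then be a discontinuity, unless $T$ happens to be continuous there, in which case it is harmless. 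So a small left interval around $T^i\alpha$ is translated by the same amount.

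\textbf{Step 2 (no early return for nearby $x$).} I need to show that for $x$ close to $\alpha$ on the left, $T^ix\notin\Delta$ for $1\le i<h$. By (a), $T^i\alpha\notin[a,b)$, so either $T^i\alpha<a$ or $T^i\alpha\ge b$. If $T^i\alpha<a$, then $T^ix<a$ as well, since $T^ix$ lies to the left of $T^i\alpha$. If $T^i\alpha>b$, then $T^ix>b$ after shrinking $\eps$. The only bad case is $T^i\alpha=b$, where $T^ix=b+(x-\alpha)$ lies in $\Delta$ for $x<\alpha$. But then $\lim_{x\to\alpha^-}T^ix=b$, which is exactly (c.3), now excluded. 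I expect this boundary bookkeeping to be the main subtle point: the half-open convention for $\Delta$ means only the right endpoint $b$ can "leak" from the left, and one has to check carefully that no other edge effect arises.

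\textbf{Step 3 (return at time $h$).} I have $T^h\alpha\in[a,b)$. If $T^h\alpha>a$, then $T^hx=T^h\alpha+(x-\alpha)\in\Delta$ for $x$ close to $\alpha$. If $T^h\alpha=a$, then $T^hx<a$ and the return time jumps; this is (c.2), which is excluded. Hence $h_{\Delta,T}(x)=h$ and $T_\Delta x=T_\Delta\alpha+(x-\alpha)$ on $(\alpha-\eps,\alpha]$. Combined with right-continuity of $T_\Delta$ at $\alpha$, this gives continuity of $T_\Delta$ at $\alpha$, the desired contradiction. The counting bound on $|\chi_\Delta|$ is not required by the statement, so I would only mention it in passing: each discontinuity is charged to one of the finitely many events in (c.1)–(c.3).
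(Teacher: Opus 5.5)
Your argument is correct. The paper does not prove this proposition itself; it attributes it to Katok \cite[Proof of Lemma 2]{Kat}, with \cite{CornfeldErgodicTheory} cited for the sharp count. So there is no in-paper route to compare against. Your contrapositive argument is the standard self-contained verification of what the paper takes from the literature: if (c.1), (c.2) and (c.3) all fail, then $T_\Delta$ is translation by $T_\Delta\alpha-\alpha$ on a left neighbourhood of $\alpha$, hence continuous there.

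Two places deserve cleaner wording.

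\emph{Step~1.} The fact you need is simply this: for $p\in(0,1)$, $T$ is continuous at $p$ if and only if it is a single translation on a two-sided neighbourhood of $p$. At an interior breakpoint $a_{j-1}$, continuity forces $\sigma_{j-1}=\sigma_j$. The case $T^i\alpha=0$, where no left neighbourhood exists, never arises. Your inductive hypothesis gives $T^i\alpha-(\alpha-x)=T^ix\ge 0$ for all $x\in(\alpha-\varepsilon,\alpha]$, so $T^i\alpha\ge\varepsilon$.

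\emph{Right-continuity of $T_\Delta$.} Rather than calling it ``automatic for IETs'', note that it follows from the mirror image of your Steps~1--3 on a right neighbourhood of $\alpha$. There no exceptional cases occur, because the pieces of $T$ and $\Delta$ itself are closed on the left and open on the right. The same observation shows $a\notin\chi_\Delta$, which justifies your reduction to $a<\alpha<b$.
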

\begin{remark}
    In light of Proposition \ref{prop:TypesOfDiscontinuities}, we have that if an  IET $T:[0,1)\rightarrow [0,1)$ has $N-1$ discontinuities, then for any $\Delta\subseteq [0,1)$, $T_\Delta$ has at most $N+1$ discontinuities. Equivalently, if $T$ "exchanges" $N$ intervals, $T_\Delta$ exchanges at most $N+2$ intervals.
\end{remark}
\subsection{A sufficient condition for minimality}
A (right-continuous) IET $T:[a,b)\rightarrow [a,b)$ is called \textit{minimal} (or quasi-minimal
to avoid confusion with the concept steming from topological dynamics) if for every $x\in[a,b)$, the set $\{T^nx\,|\,n\in\N\}$ is dense in $[a,b)$. The following proposition states that Keane's Condition (also known as the "disjoint infinite orbits condition") is sufficient to guarantee the minimality of $T$. As we will see, this condition arises naturally when studying the quasi-minimal components of a smooth-area preserving flow. 
\begin{proposition}[Keane's Condition, one-sided variant]\label{thm:KeanesCondition}
    Let $N>1$, let  $T:[0,1)\rightarrow[0,1)$ be a right-continuous IET, and let $a_1<\cdots<a_{N-1}$ be a list of the discontinuities of $T$ (so $0<a_1$ and $a_{N-1}<1$). Suppose that the sets $\Omega_j=\{T^na_j\,|\,n\in\N\}$, $j\in\{1,...,N-1\}$, are pairwise disjoint and that for each $j\in\{1,...,N-1\}$, $\Omega_j$ is infinite. Then, $T$ is quasi-minimal.
\end{proposition}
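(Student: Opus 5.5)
We will prove quasi-minimality by contradiction, following Keane's classical argument adapted to right-continuous IETs. Suppose some $x\in[0,1)$ has forward orbit $\{T^nx\}$ not dense. The plan is to find a nonempty half-open interval $J=[c,d)$ avoiding the forward orbit of $x$. We then enlarge $J$ to a maximal interval $J'$ with this property, and study how $T$ acts on such maximal intervals.

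\textbf{Step 1 (a periodic interval).} Fix $x$ whose forward orbit misses an open interval $U$. Consider the intervals $T^{-n}U$, $n\geq 0$. Since $T$ preserves Lebesgue measure, Poincar\'e recurrence gives $n\ne m$ with $T^nU\cap T^mU$ of positive measure. We prefer a cleaner route, however: choose a maximal interval $I=[c,d)$ among intervals disjoint from the closure of $\{T^nx : n\geq N_0\}$. The set of points of $[0,1)$ not in this closure is open (in the relative sense) and $T$-invariant up to finitely many discontinuity points. Let $W$ be this complement, a finite or countable union of maximal intervals. Consider a component $I$ of $W$ and its images $T^kI$. If the interior of $I$ contains no discontinuity $a_j$ for $k$ steps, then $T^k$ maps $I$ isometrically onto an interval inside $W$.

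\textbf{Step 2 (endpoints must hit discontinuities).} Because each $T^kI$ has the same length as $I$, and the total measure is $1$, there exist $k<l$ such that $T^kI$ and $T^lI$ overlap. Suppose no iterate $T^iI$, $0\le i<l$, contains a discontinuity of $T$ in its interior. Then $T^{l-k}$ translates the interval $T^kI$ onto one overlapping it. By maximality of components, it maps $T^kI$ exactly onto itself, so $T^{l-k}$ is the identity on it. Points there are periodic, but orbits of discontinuities are infinite by assumption, which constrains this case. To reach a genuine contradiction, we would work instead with the maximal intervals. Each time an iterate of $I$ splits at an $a_j$, the piece's left endpoint is $a_j$. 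The right endpoint of the component comes from the limit from the left, and there the left-limits of $T$ enter.

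\textbf{Step 3 (contradiction via disjoint orbits).} The key claim is that the left endpoint of each component of $W$ lies, after finitely many forward iterates, on the forward orbit of some $a_j$. The reason is that the boundary of $W$ consists of accumulation points of the orbit of $x$, and such a boundary point can stop being a boundary point only by passing through a discontinuity. Iterating a component forward, the images must eventually overlap by the measure argument. Since $W$ is a disjoint union, the overlap forces the left endpoint of some later image to coincide with a left endpoint from a split at some $a_j$. Following the chain of splits, we then obtain $T^n a_i=T^m a_j$ with either $i\ne j$, or $i=j$ and $n\ne m$. The first contradicts the disjointness of the $\Omega_j$; the second makes $\Omega_j$ finite. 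Hence every orbit is dense.

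The main obstacle is Step 3, specifically its bookkeeping. Right-continuity means left endpoints behave well under $T$, while right endpoints only behave well via left limits; the hypothesis, however, concerns only forward orbits of the $a_j$ themselves. So we will track only left endpoints of the component intervals. We will show that the measure-overlap argument produces two distinct iterates whose left endpoints coincide and originate from discontinuities. We expect to handle the case where no split ever occurs separately: there $T^{l-k}$ is a translation fixing an interval, which yields an interval of periodic points. Its left endpoint has a finite forward orbit and is reached from some $a_j$, giving finiteness of $\Omega_j$. In the degenerate case where it is not reached from any $a_j$, the whole IET would restrict to an invariant union of periodic intervals, whose boundary must contain a discontinuity with finite orbit.
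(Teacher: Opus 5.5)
Your proposal has a genuine gap at the step you yourself flag as the main obstacle, and the mechanism you sketch for it does not work. In Step 3 you argue that overlapping forward images of a component $I$ of $W$ force a piece's left endpoint to coincide with one produced by a split. But the discontinuity $a_j$ at which $T^iI$ splits lies in the interior of $T^iI$, i.e.\ inside $W$, so the new piece does not start at a point of $\partial W$. Two overlapping subintervals of $W$ whose left endpoints lie in $W$ have no reason to share an endpoint; overlap alone carries no information, since iterates of a short interval overlap with distinct endpoints under any irrational rotation. The component structure of $W$ forces coincidence only for intervals that start at points of the complement $K$.

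The same issue undermines Step 2: forward images of a component need not be components. If the right endpoint of $I$ is a discontinuity $a_j$, the orbit of $x$ can only accumulate on it from the right. Right-continuity then puts $T(a_j)$ in $K$, but says nothing about $\lim_{z\to a_j^-}Tz$, which is the right end of $T(I)$. So $T(I)$ may be a proper subinterval of a component, and ``by maximality, $T^{l-k}$ maps $T^kI$ onto itself'' is unjustified. More fundamentally, a purely forward analysis can at best show that boundary points of $W$ eventually land on discontinuities. Since the hypothesis restricts only the forward orbits \emph{of} the $a_j$, you also need to know that such a boundary point lies on some $\Omega_i$, and nothing in your sketch provides this.

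The missing idea is to follow a single boundary point of $W$ both forwards and backwards. This is the classical Keane argument; the paper does not prove the proposition itself but cites Viana and Cornfeld--Fomin--Sinai.

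First, $T0\neq 0$: otherwise $T$ fixes $[0,a_1)$ and $a_1=Ta_k$ for some $k$, which the hypothesis forbids. Hence $0=Ta_i$ for some $i$, so every discontinuity of $T^{-1}$, each being one of $T0,Ta_1,\dots,Ta_{N-1}$, lies in $\bigcup_i\Omega_i$.

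Next, exclude periodic points. If $T^py=y$, let $[c,y]$ be the maximal interval on which $T^p$ is the identity. If $c>0$, then $T^p$ is discontinuous at $c$, so $T^rc=a_j$ for some $r<p$ and $a_j$ is periodic. If $c=0$, then $a_i=T^{p-1}0$ is periodic.

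Now let $K$ be the $\omega$-limit set of $x$ rather than a forward orbit closure, so that $T^{-1}$ also maps $K$ into itself at its continuity points. Let $W$ be the complement of $K$ and $e\in K$ an endpoint of a component of $W$. If $T$ is continuous at $e$, then $Te$ is again an endpoint of a component on the same side. That component is no shorter than the one at $e$ unless the latter contains some $a_j$. The points $T^ne$ are distinct, so their components are pairwise disjoint and each $a_j$ causes at most one shortening. Never hitting a discontinuity would therefore give infinitely many disjoint intervals of length bounded below, which is impossible. Hence $T^ne=a_j$ for some $n\ge 0$.

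The same argument for $T^{-1}$ gives $T^{-m}e\in\{T0,Ta_1,\dots,Ta_{N-1}\}\subseteq\bigcup_i\Omega_i$. Thus $a_j=T^ne\in\Omega_i$ for some $i$. If $i\neq j$ this gives $Ta_j\in\Omega_i\cap\Omega_j$; if $i=j$ it makes $a_j$ periodic. Either way the hypothesis is contradicted.
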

For the proof of Proposition \ref{thm:KeanesCondition}, we refer the reader to Proposition 4.1 in \cite{viana2006} (see also \cite[Theorem 5.4.1]{CornfeldErgodicTheory}).

\section{Background on smooth area preserving flows}\label{s:flows1}
Throughout this section we will let $\mathcal M$ denote a compact, connected and orientable smooth surface. Let $X:\mathcal M\rightarrow T\mathcal M$ denote a $C^\infty$ smooth tangent vector field with finitely many  or no zeroes, and let $(\phi_t)$, $t \in \mathbb R$, be the corresponding smooth flow.
We assume that $\phi_t$ preserves a smooth nondegenerate area form $\omega$.
Then the flow is \emph{locally Hamiltonian} (a proof is given, e.g., in~\cite{ChFKU}): the pair $(\phi_t)$ and $\omega$ determines a unique smooth closed
differential 1-form $\eta$ with the following properties:
\begin{enumerate}
    \item Locally, $\eta=dH$ for some smooth, real-valued, local function $H$,  called the \textit{local Hamiltonian}. 
    \item $\eta$ is the $X$-contraction of $\omega$: $\eta(Y)=\omega(X, Y)$ for any vector field $Y$. 
\end{enumerate}
Conversely, any smooth closed real-valued
differential 1-form $\eta'$ on $(\mathcal M,\omega)$ determines an  $\omega$-preserving flow known as the \textit{local Hamiltonian flow} associated to $\eta'$.
Selecting local coordinates $(x,y)$ such that $\omega = dx \wedge dy$ (they always exist), the vector field $X$ giving $(\phi_t)$ writes in terms of $H$ (the local Hamiltonian which satisfies $dH=\eta'$) by the usual Hamilton's equations:
\begin{equation} \label{e:hamilton-equations}
\left( \frac{\partial H}{\partial y}, -\frac{\partial H}{\partial x} \right).     
\end{equation}
Locally Hamiltonian flows are often called \emph{multi-valued Hamiltonian}, as one can treat $H$ as a globally defined multivalued function on $\mm$. 

Let us now restate some standard definitions in order to be precise.
For any $\mathcal I\subseteq \R$ and any $p\in\mathcal M$, we set  
$$
\phi_\mathcal I(p):=\{\phi_t(p)\,|\,t\in\mathcal I\}.
$$
For a given point $p\in\mathcal M$, we call the set $\phi_\R(p)$ its \emph{trajectory}, the set $\phi_{[0,\infty)}(p)$ its \emph{forward semi-trajectory}, and the set $\phi_{(-\infty,0]}(p)$ its \emph{backward semi-trajectory}.
A point $x \in \mm$ is a \emph{fixed point}, if $X(x)=0$; we denote the set of all fixed points by $\mm_*$.
Noting that
$dH(X) = \eta(X)=\omega(X, X)=0$, we see that $H$ is conserved by $(\phi_t)$.
Moreover, when restricted to the complement of $\mm \setminus \mm_*$, trajectories of $(\phi_t)$ locally coincide with connected components of the level sets of $H$.
We say that a \emph{stable separatrix} is a trajectory $\phi_{\mathbb R}(x_0)$, $x_0 \in \mm \setminus \mm_*$ that ends at a fixed point: $\lim_{t \to \infty}\phi_t(x_0)$ exists and is a fixed point.
A trajectory that begins at a fixed point, i.e. $\lim_{t \to -\infty}\phi_t(x_0)$ is a fixed point, is called an \emph{unstable separatrix}.
A trajectory can be both a stable and an unstable separatrix, then we call it a \emph{separatrix connection} between the fixed points it begins and ends at.

When $(\phi_t)$  has finitely many fixed points and separatrices, one has the following classical decomposition result.
\begin{theorem}[\cite{Mai, Kat73, NZ}] \label{t:decomposition}
    The surface $\mm$ can be decomposed into a finite union of $(\phi_t)$-invariant closed domains $\mm_i$ with nonempty disjoint interiors. The boundary of each $\mm_i$ is formed by polycycles (closed curves formed by fixed points and separatrix connections between them). The dynamics on each $\mm_i$ can only be:
    \begin{enumerate}
    \item $\mm_i$ is a \emph{quasi-minimal component}: any forward semi-trajectory in $\mm_i$ that is not a stable separatrix and not a fixed point is dense in $\mm_i$; any backward semi-trajectory in $\mm_i$ that is not an unstable separatrix and not a fixed point is dense in $\mm_i$.
    \item $\mm_i$ is an annulus filled by periodic orbits, or a disc filled by periodic orbits winding around a unique fixed point.
    \item $\mm_i$ is the whole $\mm$. Then either $\mm = S^2$ is the sphere with equatorial flow\footnote{i.e., the flow has two fixed points of the center type, and the punctured sphere between them is filled by periodic points}, or $\mm = \mathbb T^2$ is the torus filled by periodic orbits.
    \end{enumerate}
\end{theorem}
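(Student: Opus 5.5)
We prove the decomposition theorem by a Poincaré–Bendixson type analysis adapted to surfaces, using the locally Hamiltonian structure to exclude limit cycles and wandering behaviour.

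\textbf{Step 1: the separatrix skeleton.} Since there are finitely many fixed points and separatrices, the union $\Sigma$ of all fixed points and separatrix closures is a finite graph. We cut $\mm$ along the separatrix connections and fixed points. Each connected component $U$ of the complement of the closures of the separatrix connections is an open invariant set, and there are finitely many of them. We will show that each such $U$ is either periodic (an annulus or disc of closed orbits) or quasi-minimal. The closures $\mm_i=\overline U$ then have boundary made of polycycles. Here we use that a separatrix that is not a connection lies inside one component.

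\textbf{Step 2: recurrence via area preservation.} We take a transversal arc $J$ through a regular point and the first return map to $J$. This map preserves the measure induced by $\eta$ (the flux $|dH|$ restricted to $J$). Because trajectories are level sets of a local $H$, the return map is a local translation in the coordinate $H$ on $J$, i.e.\ piecewise an isometry. It is defined away from finitely many points whose orbits hit fixed points, so it is an IET on a finite union of intervals. By Poincaré recurrence, every non-separatrix orbit returns. This rules out limit cycles and non-recurrent orbits.

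\textbf{Step 3: dichotomy in each component.} In a component $U$, suppose some regular orbit is closed. By the local Hamiltonian flow-box, nearby orbits are also closed (the return map is an isometry fixing a point, hence locally the identity). The set of closed orbits is therefore open, and also closed in $U$ minus separatrices. Hence $U$ is filled by closed orbits and is an annulus, or a disc around a center. The global cases, the sphere with equatorial flow or the periodic torus, arise when $\Sigma$ is empty of saddles, or empty altogether.

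Otherwise all regular orbits in $U$ are non-closed and recurrent. Here we apply Keane-type reasoning, Proposition~\ref{thm:KeanesCondition}, in its contrapositive spirit. A rational relation between discontinuity orbits of the return IET produces a separatrix connection. Connections have been cut out, so within $U$ the discontinuity orbits are infinite and disjoint. Hence the induced IET is minimal, and every semi-trajectory that is not a separatrix is dense in $U$.

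\textbf{Main obstacle.} The hardest part is the topological bookkeeping. We must show that the return map to a well-chosen transversal (one meeting every orbit of $U$) is a genuine IET whose discontinuities correspond exactly to separatrices. We must also verify that the Keane condition matches the absence of connections inside $U$. First we would try a transversal built from short arcs at each saddle sector, glued into one interval using the flux parametrization.
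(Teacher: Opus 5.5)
Your outline follows the same overall route as the paper: a first-return IET on a transversal plus Keane's criterion, with the periodic case treated separately. However, the Keane step is wrong as stated. Besides the separatrix-type discontinuities, the return IET on a transversal $[a,b)$ has discontinuities at points whose orbit runs into an endpoint of the transversal before returning (criterion 2 in the definition of $\mathcal D$). Coincidences among their $T$-orbits involve no separatrix at all. For example, take an irrational linear flow on $\T^2$, which has no fixed points and hence no separatrices. Suppose $b$ lies on the forward orbit of $a$, and this orbit crosses $(a,b)$ at $p_1,\dots,p_k$ before reaching $b$. Then the discontinuity $d_1$ with $Td_1=a$ and the discontinuity $d_2=p_k$, whose orbit runs into $b$, satisfy $T^{k+1}d_1=d_2$. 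So the absence of connections in $U$ does not give Keane's condition. The transversal has to be chosen so that the endpoint-type discontinuities are harmless. The paper does this in Proposition~\ref{p:flows-Keane} by letting $\Gamma$ run from a point $x_0$, whose orbit is neither a stable separatrix nor periodic, to its first return $\phi_\tau(x_0)$. Your proposed repair, gluing short arcs at saddle sectors into one interval, makes this worse, since each extra arc brings two more uncontrolled endpoints. Moreover, minimality of $T$ only gives density in the closed saturation $\Lambda$ of the transversal, so you still need $\Lambda=\overline U$. The paper gets this from connectedness of $\Lambda$ and the fact that $\partial\Lambda$ consists of fixed points and separatrix connections, which do not separate $\mm_i$.

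The second missing ingredient is a degeneration lemma, which you use twice without proof. In Step 2, Poincar\'e recurrence only says that almost every point of the transversal returns. The non-returning set is closed and null but could a priori be a Cantor set. So ``defined away from finitely many points'' and ``every non-separatrix orbit returns'' require showing that each endpoint of a maximal interval of returning points either runs into an endpoint of the transversal or lies on a stable separatrix. In Step 3, ``closed in $U$ minus separatrices'' and ``hence an annulus or a disc'' (and likewise the sphere and torus cases) require knowing how a continuous family of periodic orbits can terminate. The paper gets both from Proposition~\ref{p:psi-s-star}: push a trajectory segment along the orthogonal flow $(\psi_s)$, for which $dH/ds=1$. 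It either exists for all $s$ or converges in Hausdorff distance to a separatrix path or a fixed point. This gives Lemmas~\ref{l:good-intervals} and~\ref{l:the-set-D}, hence Proposition~\ref{p:first-return-IET}. Applied to a periodic orbit in both directions, it yields the annulus, the disc, or the equatorial sphere. Indefinite continuation instead forces the family to close up on itself, so $\mm$ is filled by periodic orbits and is $\T^2$ by Poincar\'e--Hopf. Without such a lemma, your Steps 2 and 3 are assertions rather than arguments.
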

For the convenience of the reader, we provide a self-contained sketch of the proof of Theorem \ref{t:decomposition} in the next section. 
\begin{remark}
Maier's classical result (\cite{Mai}, see also the book~\cite{NZ}) actually describes nontrivial recurrent trajectories of non-area preserving flows as well. He also proved that the number of quasi-minimal components (or their analogues for non-area preserving case -- see~\cite{Mai, NZ}) is always bounded by the genus $g$ of $\mathcal M$. 
\end{remark}

\section{Special flow representations of smooth area-preserving flows} \label{s:flows2}
As before, throughout this section we will let $\mathcal M$ denote a compact, connected and orientable smooth surface and we will let $(\phi_t)$ denote a smooth-area preserving flow defined on $\mathcal M$ having at most finitely many (possibly zero) fixed points and a finite number of separatrices.

A standard tool used to study  dynamics on each quasi-minimal component of $(\phi_t)$ is to represent it by a special flow over an IET. This has been proven useful when studying ergodic properties of area-preserving surface flows -- see, for example,~\cite{ChW, Ravotti, Ulc1}.
To prove Theorem~A, we need the following additional property that holds if $\mm$ has at least one fixed point: one can choose the special flow representation in such a way that the roof function has two-sided infinite singularities at each point where it is undefined, and, additionally, it has a singularity  at every discontinuity of the IET.
\begin{theorem}\label{thm:AnalyticFlowRepresentation}
    Suppose that $\mm$ has at least one fixed point and let 
    $\mm_i$ be one of its  quasi-minimal components.
    Then the restriction of $(\phi_t)$ to $\mm_i$ can be represented (i.e. is measure theoretically isomorphic) as a special flow over a minimal IET $T:[a,b) \mapsto [a,b)$ and under a roof function $f$ that possess the following properties:
    \begin{enumerate}
    \item The function $f$ is defined at all but finitely many $x\in[a,b)$ and it is smooth at any $x \in (a,b)$ where it is defined. Furthermore, if $f$ is defined at $x$, one has that $Tx=\phi_{f(x)}(x)$.

    \item If $f$ is not defined at $x\in (a,b)$ it has a singularity there\footnote{Moreover, there is a separatrix paths (see Section~\ref{s:orthogonal} for the definition) of $(\phi_t)$ connecting $x$ with $Tx$ and another (potentially equal) separatrix path connecting $x$ with $\lim_{y\rightarrow x^-}Ty$.}:
    $$\lim_{y\rightarrow x^-}f(y)=\lim_{y\rightarrow x^+}f(y)=\infty.$$
    
    \item $f$ has at least one singularity on $(a,b)$, and it has a singularity at each of the discontinuities of $T$.

    \item \label{prop:endpoints}  For every integer $n \ge 0$, both $\lim_{x\rightarrow a^+}f^{(n)}(x)$ and $\lim_{x\rightarrow b^-}f^{(n)}(x)$ exist. 
    \end{enumerate}
\end{theorem}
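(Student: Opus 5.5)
Take a transversal segment inside the interior of the quasi-minimal component $\mm_i$ and study its Poincaré first return map. Concretely, pick a regular point $p\in\intt \mm_i$ that is not on any separatrix. In a flow box around $p$, with coordinates in which $H$ is the vertical coordinate, choose a short transversal $\Sigma$ through $p$ along which $H$ is strictly monotone, and parametrize $\Sigma$ by $H$; rescaling gives $[a,b)$. Since the flow preserves $\omega$, the first return map $T$ to $\Sigma$ preserves the measure $dH$. By Theorem~\ref{t:decomposition}, every semi-trajectory in $\mm_i$ that is not a separatrix is dense, so it returns to $\Sigma$. Only finitely many points of $\Sigma$ fail to return: those lying on stable separatrices, plus the endpoint issues.

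Away from these finitely many points, the first return time $f$ is smooth (implicit function theorem), and $T$ acts as a translation in the $H$-coordinate. Indeed, $H$ is conserved along trajectories, and the multivaluedness of $H$ only contributes a locally constant period shift. Hence $T$ is an IET and $Tx=\phi_{f(x)}(x)$. The map from $\{(x,s):0\le s<f(x)\}$ to $\mm_i$ given by $(x,s)\mapsto\phi_s(x)$ is a measurable conjugacy of full measure, because its image omits only the separatrices, the fixed points, and the trajectories through the endpoints of $\Sigma$. Minimality of $T$ follows from the density of non-separatrix semi-trajectories, or alternatively from Proposition~\ref{thm:KeanesCondition}.

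For items 2 and 3, the points where $f$ is undefined are the hits of $\Sigma$ by stable separatrices. A point $x$ just to either side of such a hit follows the separatrix into a small neighbourhood of the saddle. There the trajectory spends an arbitrarily long time, since the vector field vanishes at the saddle, and it then exits along an adjacent unstable separatrix. This gives $f(y)\to\infty$ as $y\to x^\pm$. The two sides may exit along different unstable separatrices, which produces the separatrix paths mentioned in the footnote.

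Conversely, a discontinuity of $T$ in $(a,b)$ arises only when the return orbits on the two sides split. This can happen for two reasons: either the orbit meets a fixed point, which is a singularity of $f$, or its continuation hits an endpoint of $\Sigma$. The second case is the main obstacle, because it produces discontinuities of $T$ with no singularity of $f$. To remove it, I will choose the endpoints of $\Sigma$ themselves on separatrices. Take the left endpoint $a$ to be the first point of an unstable separatrix crossing a transversal. The right endpoint $b$ is chosen by following a stable separatrix backwards until it lands on the transversal, so that the forward trajectory of $b$ ends at a saddle before it could return. Such choices are possible because separatrices in a quasi-minimal component are dense (Theorem~\ref{t:decomposition}), and a fixed point exists by assumption. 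With these choices, any orbit that would otherwise be cut by an endpoint is cut by a separatrix, so every discontinuity of $T$ is a singularity of $f$. Since $T$ is minimal and not a rotation of the full circle, it has at least one discontinuity, which gives item 3. If $T$ happens to have no discontinuity, I will shrink $\Sigma$ so that a separatrix crosses it, which still yields item 3.

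Finally, for item 4, orbits starting near $a^+$ and near $b^-$ return along the trajectory of the endpoint. That trajectory is regular for a finite time, because the endpoints lie on separatrices of the type just chosen and not on the singular side. Hence $f$ extends smoothly to a one-sided neighbourhood of each endpoint, and all derivatives have limits there. I expect checking item 4 to require care in orienting the endpoint separatrices correctly: $a$ should lie on an unstable separatrix and $b$ on the backward continuation of a stable one, so that each endpoint has a regular return from the interior side.
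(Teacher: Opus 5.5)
Your overall strategy (first return to a transversal, with endpoints placed on separatrices so that no discontinuity of $T$ is caused by an endpoint) is the right one. The choice of the right endpoint $b$, however, breaks the argument. An endpoint $e$ of $\Sigma$ creates interior discontinuities of $T$ through the points $x\in(a,b)$ whose forward orbit first returns to $\Sigma$ exactly at $e$. These points are governed by the \emph{backward} orbit of $e$; the forward orbit of $e$ plays no role in this.

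You place $b$ on a stable separatrix, which controls only its forward orbit. Unless that separatrix is a separatrix connection, its backward semi-trajectory is dense in $\mm_i$ (Theorem~\ref{t:decomposition}), so it meets $\Sigma$. Its most recent crossing before reaching $b$ is an interior point $x$ whose forward orbit first hits $\Sigma$ at $b$.
\begin{itemize}
\item For $y\to x^-$, the orbit lands on $\Sigma$ just left of $b$ after a bounded time. So $\lim_{y\to x^-}Ty=b\notin[a,b)$, and $f$ stays bounded on that side.
\item For $y\to x^+$, the orbit passes just beyond $b$, shadows the stable separatrix into the saddle, and $f(y)\to\infty$.
\end{itemize}
Thus $x$ is a discontinuity of $T$ at which $f$ has only a one-sided singularity, contradicting items 2 and 3. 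Item 4 also fails at $b$ itself: every $y\to b^-$ shadows the forward orbit of $b$ into the saddle, so $f(y)\to\infty$. A stable separatrix has no ``regular side'', so your orientation remark cannot rescue this. Taking the separatrix through $b$ to be a separatrix connection does not help either, since item 4 still fails at $b$.

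The missing idea is to put \emph{both} endpoints on the same unstable separatrix that is not a separatrix connection.
\begin{itemize}
\item Take a short transversal $\Gamma_0\subset\intt\mm_i$, avoiding separatrix connections, having as an endpoint the first point $q$ where this separatrix meets it, so that the backward orbit of $q$ misses $\Gamma_0$.
\item Let $q'$ be the first return of the forward orbit of $q$ to $\Gamma_0$, and take $\Gamma$ to be the segment between $q$ and $q'$.
\end{itemize}
Then the only point of $\Gamma$ whose forward orbit first hits an endpoint is the other endpoint itself. Hence every interior point of $\mathcal D$ lies on a stable separatrix, and $f$ blows up on both sides there. Proposition~\ref{p:flows-Keane} applies and gives minimality and at least one discontinuity. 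Near each endpoint, returning orbits follow the regular orbit of $q$ through $q'$ to its first interior hit of $\Gamma$, which gives item 4.

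Separately, the existence of an unstable separatrix in $\intt\mm_i$ that is not a connection does not follow from Theorem~\ref{t:decomposition} alone. The paper derives it from the unboundedness of the return time to a transversal, which is forced by the fixed point. Unbounded return time means some discontinuity of the inverse-time return map must be of separatrix type.
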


\begin{figure}[ht]
\centering
\resizebox{!}{4cm} {\includegraphics{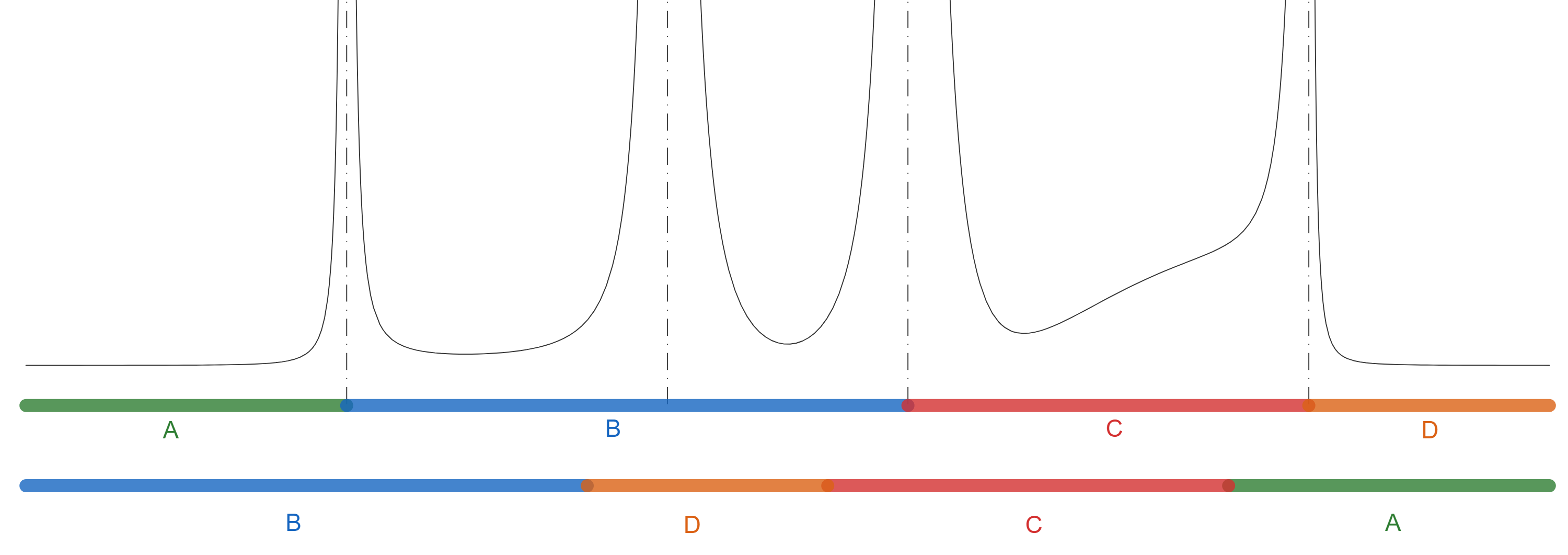}}
\caption{\small An example of a special flow satisfying the conclusions of Theorem~\ref{thm:AnalyticFlowRepresentation}.}
\label{Fig:ExampleSpecialFlowRep}
\end{figure}
Although existence of a special flow representation for smooth area-preserving flows is well known to experts, we could not find it stated in the literature in the required generality. Additionally, we have not found the properties of the roof function $f$ claimed in~Theorem \ref{thm:AnalyticFlowRepresentation}, which are crucial for our proof of Theorem~A.
For the reader's convenience, we present in this section a full proof of Theorem~\ref{thm:AnalyticFlowRepresentation}. 
The tools developed to prove Theorem~\ref{thm:AnalyticFlowRepresentation} will also allow us to give a brief sketch of a proof of Theorem~\ref{t:decomposition}.

\subsection{The orthogonal flow} \label{s:orthogonal}

In this subsection, we introduce a flow $(\psi_s)$, which is a useful tool for the study of the original flow $(\phi_t)$, and state an important technical proposition required for the sequel. 
Equip $\mm$ with a Riemannian metric consistent with the volume form $\omega$ that defines the symplectic structure.
\begin{lemma}
There exists a unique flow $(\psi_s)$ on $\mm \setminus \mm_*$ such that its trajectories are orthogonal to those of $(\phi_t)$, and for any local Hamiltonian $H$ of $(\phi_t)$ we have $\frac{dH}{ds} = 1$ along $(\psi_s)$.
\end{lemma}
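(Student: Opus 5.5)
Away from $\mm_*$ the vector field $X$ is nonzero, so the plan is to write down the unique candidate vector field $Y$ and check that it is well defined and smooth on $\mm\setminus\mm_*$.

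\textbf{Local construction.} Fix a point $p\in\mm\setminus\mm_*$ and a local Hamiltonian $H$ near $p$, so that $dH=\eta$ there.
\begin{enumerate}
\item[(a)] Orthogonality to the trajectories of $(\phi_t)$ forces $Y$ to be parallel to the metric gradient of $H$: for any $Y$ with $g(Y,X)=0$ one has $Y=\lambda\nabla H$, since $\nabla H$ is orthogonal to level sets and $X$ is tangent to them.
\item[(b)] The condition $dH(Y)=1$ gives $\lambda |\nabla H|^2=1$. Hence
$$Y=\frac{\nabla H}{|\nabla H|^2}.$$
\item[(c)] This is well defined because $\nabla H\neq 0$ away from fixed points. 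Indeed, by Hamilton's equations \eqref{e:hamilton-equations}, $|X|=|\nabla H|$ in Darboux coordinates; more invariantly, $\eta=\iota_X\omega$, so $\eta$ vanishes exactly where $X$ does.
\end{enumerate}

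\textbf{Independence of the choice of $H$.} Any two local Hamiltonians differ by a constant on connected overlaps, so they have the same gradient. Equivalently, $Y$ can be written globally as $Y=\eta^\sharp/|\eta|^2$, where $\sharp$ is the metric dual. This shows that $Y$ is a globally defined $C^\infty$ vector field on the open set $\mm\setminus\mm_*$.

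\textbf{Existence of the flow.} Let $(\psi_s)$ be the local flow generated by $Y$. Smoothness of $Y$ gives existence and uniqueness of integral curves. The curves need not exist for all $s$: they may run into a fixed point in finite time, or leave a region. So "flow" is to be understood as a local flow on $\mm\setminus\mm_*$, defined for $s$ in the maximal interval.

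\textbf{Verification of the two properties.}
\begin{enumerate}
\item[(a)] Along $\psi_s$ we have $\frac{d}{ds}H(\psi_s(x))=dH(Y)=1$.
\item[(b)] Orthogonality holds because $g(\nabla H,X)=dH(X)=\omega(X,X)=0$.
\end{enumerate}

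\textbf{Uniqueness.} If another flow has trajectories orthogonal to those of $(\phi_t)$ and satisfies $dH/ds=1$, then its generator satisfies the same two pointwise conditions. By the linear-algebra argument in the local construction, these conditions determine the generator uniquely. Hence the generator equals $Y$, and the two flows coincide by ODE uniqueness.

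\textbf{Main obstacle.} The only delicate point is the global meaning of "flow": completeness fails near $\mm_*$. I would state explicitly that $(\psi_s)$ is a local flow. If needed, one can note that on compact subsets bounded away from $\mm_*$ the field $Y$ is bounded, so trajectories exist for a uniform time there.
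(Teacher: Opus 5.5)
Your proof is correct and takes essentially the same route as the paper. The paper also defines $Y$ pointwise as the unique vector orthogonal to $X$ with $\omega(X,Y)=dH(Y)=1$ (possible off $\mm_*$ by non-degeneracy of $\omega$) and takes $(\psi_s)$ to be its flow; your formula $Y=\eta^\sharp/|\eta|^2$ is that same vector written explicitly, and your extra checks (independence of the local Hamiltonian, and the caveat that $(\psi_s)$ is only a local flow that may reach $\mm_*$ in finite time) are accurate details the paper leaves implicit.
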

\begin{proof}
Let $X$ be the vector field giving the flow $(\phi_t)$. Because $\omega$ is non-degenerated, there exists
a unique vector field  $Y$ defined on $\mm \setminus \mm_*$ which is orthogonal to $X$ and which satisfies the condition $\omega(X,Y)=1$. Let $\psi_s$ be the flow associated to $Y$.  Recall that $H$ is a local Hamiltonian if and only if $\omega(X, \cdot) = dH$. Thus, the condition $\omega(X,Y)=1$ rewrites as $\frac{dH}{ds} = 1$ along $\psi_s$.
\end{proof}

If $\gamma \subset \mm \setminus \mm_*$ is a trajectory segment (meaning that $\gamma = \phi_{[0, T]}(x_0)$ for some $T>0$ and $x_0 \in \mm \setminus \mm_*$) and $s$ is such that for every $s'\in [0,s]$,  $\psi_{s'}(\gamma) \subset \mm \setminus \mm_*$  is well defined, then $\psi_s(\gamma)$ must also be a trajectory segment. Indeed, $\gamma$ locally is a level set of a local Hamiltonian isolated from its critical points, and so is $\psi_s(\gamma)$ due to $\frac{dH}{ds} = 1$.
We will now state a proposition describing how $\psi_s(\gamma)$ degenerates when it approaches $\mm_*$, which is the main technical tool employed in the proof of Theorem~\ref{thm:AnalyticFlowRepresentation}.

Recall that a fixed point $x \in \mm_*$ is a \emph{saddle} if $H$ does not achieve a local maximum or minimum at it. This allows for saddles with degenerate linear part, i.e. such that $H$ has zero Hessian determinant. Note that a stable separatrix can only end at a saddle, and an unstable separatrix can only begin at a saddle.
We will say that a \emph{separatrix path} is a union of a beginning point $a \in \mm$, saddles $s_1, \dots, s_N \in \mm_*$, $N > 0$, an ending point $b \in \mm$, and the following (pieces of) separatrices connecting them:
a stable separatrix piece $\phi_{[0, \infty)}(a)$ connecting $a$ with $s_1$, $N-1$ separatrix connections\footnote{If $N=1$, there are no separatrix connections.} beginning at $s_i$ and ending at $s_{i+1}$ for $i=1, \dots, N-1$,
and an unstable separatrix piece $\phi_{(-\infty, 0]}(b)$ connecting $s_N$ with $b$.
We may have $a=s_1$ or $b=s_N$ here, and some of the saddles $s_i$ may coincide. So, technically, even a single saddle is a separatrix path for us.

\begin{proposition} \label{p:psi-s-star}
Let $\gamma \subset \mm \setminus \mm_*$ be a trajectory segment of $(\phi_t)$: $\gamma = \phi_{[0, T]}(x_0)$ for some $T>0$ and $x_0 \in \mm \setminus \mm_*$. 
Then either $\psi_s(\gamma) \subset \mm \setminus \mm_*$ is defined for all $s>0$, or there exists $s_* > 0$ such that $\psi_s(\gamma) \subset \mm \setminus \mm_*$ is well defined for all $s \in [0, s_*)$, and the limit with respect to Hausdorff distance $\lim_{s \to s_*^-} \psi_{s}(\gamma)$ exists and is either a separatrix path of $(\phi_t)$, as defined above, or a single fixed point.
\end{proposition}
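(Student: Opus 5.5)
The plan is to parametrize the family $\psi_s(\gamma)$ by the value of a local Hamiltonian and to study the maximal interval of definition. Let $s_*\in(0,\infty]$ be the supremum of those $s$ for which $\psi_{s'}(\gamma)\subset \mm\setminus\mm_*$ is defined for all $s'\in[0,s]$. If $s_*=\infty$ we are in the first alternative, so assume $s_*<\infty$. First I will show that $s_*>0$: $\gamma$ is compact and disjoint from $\mm_*$, so $Y$ is smooth and bounded on a neighborhood of $\gamma$. Then I will note that for $s<s_*$ each $\psi_s(\gamma)$ is a trajectory segment (as observed before the proposition). Moreover, the map $F:[0,s_*)\times[0,T]\to\mm$, $F(s,t)=\psi_s(\phi_t(x_0))$ (reparametrized so that each $s$-slice is an orbit segment), is an immersion onto a strip foliated by level curves $H=H_0+s$ of a single-valued Hamiltonian pulled back to the strip. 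Since $\psi_s$ maps the endpoints of $\gamma$ along orthogonal curves, the endpoints of $\psi_s(\gamma)$ are $\psi_s(x_0)$ and $\psi_s(\phi_T x_0)$.

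Next I will use compactness. The sets $\psi_s(\gamma)$ lie in compact $\mm$, so along any sequence $s_n\to s_*$ there are Hausdorff subsequential limits $K$. The reason the flow cannot be continued past $s_*$ is that some $\psi_{s_n}(\gamma)$ approaches $\mm_*$, since otherwise $|Y|$ would be bounded on a neighborhood of the whole family and we could extend. Hence $K\cap\mm_*\ne\emptyset$. Because $H$ is continuous and $H\equiv H_0+s_n$ on $\psi_{s_n}(\gamma)$ in the (lifted) strip, every point of $K$ lies on the level $H_0+s_*$ of the local Hamiltonian, and $K$ is connected. Away from $\mm_*$, $H$ is a submersion, so $K\setminus\mm_*$ consists of orbit pieces of $(\phi_t)$. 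Since $\mm_*$ is finite, the fixed points in $K$ are finitely many and are approached along level curves. At a center (local extremum of $H$) the nearby levels are small closed curves, so $K$ must collapse to that single fixed point. At a saddle $p$, the level set $\{H=H(p)\}$ near $p$ is a finite union of separatrix germs, and a nearby level curve entering a small neighborhood of $p$ exits along the adjacent pair of sectors. In the limit it therefore follows an incoming stable separatrix and then an outgoing unstable separatrix.

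Finally I will assemble $K$ as a separatrix path. The endpoints $\psi_{s_n}(x_0)$ and $\psi_{s_n}(\phi_Tx_0)$ converge, since the orthogonal curves have finite length up to $s_*$ by boundedness of $|Y|$ near non-fixed points, or else they tend to a fixed point. Call the limits $a$ and $b$. Traversing $\psi_{s_n}(\gamma)$ from start to end, the curve follows a stable separatrix from $a$ into $s_1$, turns in a sector to an unstable separatrix, which must coincide with a separatrix connection to a saddle $s_2$ or end at $b$, and so on. The number of turns is bounded because the times of $\psi_{s_n}(\gamma)$ outside fixed-point neighborhoods are bounded on compact pieces and there are finitely many separatrices. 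From this we get both the identification of $K$ and the uniqueness of the limit, i.e.\ independence of the subsequence, because the combinatorial path is determined by the monotone approach $s\uparrow s_*$ within a single sector at each saddle.

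I expect the main obstacle to be the local analysis at a degenerate saddle, where we have no Morse normal form. My first attempt will be a purely topological argument: $H$ has an isolated critical point that is not an extremum, so near $p$ the set $\{H=H(p)\}$ is a finite star of $2k$ arcs. This follows from the finiteness of separatrices together with the local structure of an isolated zero of an area-preserving field of index $1-k$. Nearby levels then run inside the sectors, and this yields the sector-turning argument above without any quantitative estimates.
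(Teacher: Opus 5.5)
Your proposal has a genuine gap. It works with subsequential Hausdorff limits $K$, and the steps that actually determine $K$ are asserted rather than proved.

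\textbf{Bounding the turns.} The claim that ``the number of turns is bounded because the times of $\psi_{s_n}(\gamma)$ outside fixed-point neighborhoods are bounded'' has no a priori justification. The flows $\phi$ and $\psi$ do not commute, and the $\phi$-time of $\psi_s(\gamma)$ blows up whenever the limit passes through a saddle. Nothing you have proved controls the part of $\psi_s(\gamma)$ spent away from $\mm_*$; this boundedness is true in the end, but only as a consequence of the conclusion. Without it, all you know is that $K$, away from $\mm_*$ and the two endpoints, is locally a union of full leaves of a flow box. So a priori $K$ could contain infinitely many leaves, even a two-dimensional piece of a quasi-minimal component.

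\textbf{The level-set remark.} This does not help. The function $H_0+s$ lives on the lifted strip, which does not contain $K$. A local Hamiltonian near a point of $K$ takes different values on different passages, because $\eta$ has periods.

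\textbf{Other unproved steps.} Independence of the subsequence (``determined by the monotone approach within a single sector'') is stated without argument. So is the dichotomy for the endpoints (``finite length, or else they tend to a fixed point''). The local picture at a degenerate $C^\infty$ saddle (finite star, hyperbolic sectors, each nearby level a single arc following the adjacent separatrices) is not something you can cite in this generality. Separatrices may spiral, and the paper establishes a sector lemma of this type only for analytic $H$.

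\textbf{The missing idea.} Show that $\lambda_s(t)=\psi_s(\phi_t(x_0))$ is uniformly Cauchy as $s\to s_*^-$.

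\emph{Cauchy estimate.} Take $\eps$ so small that the $\eps/4$-balls around the points of $\mm_*$ are $\eps/2$ apart. Let $R=\sup|Y|$ outside them and $\delta=\eps/(4R)$. A $\psi$-orbit piece of duration less than $\delta$ has length at most $\eps/4$ outside the balls and can meet at most one ball, so its endpoints are $\eps$-close. This gives a continuous extension $\overline\psi_{s_*}\colon\gamma\to\mm$, and with it Hausdorff convergence, endpoints included.

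\emph{Identifying the limit.} Let $\gamma_*=\overline\psi_{s_*}^{-1}(\mm_*)$. On each component $\alpha$ of $\gamma\setminus\gamma_*$, the map $\psi_{s_*}$ is defined and injective, and its image is an open orbit piece. Continuity at an endpoint $x_*\in\gamma_*$ forces this piece to be a final (resp.\ initial) segment of a stable (resp.\ unstable) separatrix of the saddle $\overline\psi_{s_*}(x_*)$.

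\emph{Finiteness.} Components with both ends in $\gamma_*$ map onto full separatrix connections, and different components go to different connections by injectivity. Since there are finitely many separatrices, there are finitely many such components. Hence $\gamma_*$ is a finite union of closed intervals, each collapsed to one fixed point, and the image is a fixed point or a separatrix path.

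No local analysis at the saddles is needed, so the degenerate-saddle obstacle you anticipate never arises.
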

\noindent The proposition covers the case $s > 0$, but a symmetric statement for $s<0$ also holds. 

A formal proof of this proposition is quite technical, so we present it in Appendix~\ref{a:limit-trajectory}.
However, in the special case when the flow is analytic, the proposition is heuristically clear in terms of how level sets $\{H(x, y) = h\}$ behave when $h$ approaches to a critical value.  

\subsection{First return maps are IETs} \label{s:IET-first-return}
In this subsection, we describe first return maps of area-preserving flows with zero or  finitely many fixed points and separatrices which are defined on closed, connected, orientable surfaces.
All transversals considered will be closed connected smooth curves with a finite length.
If $\Gamma$ is such transversal, one can always extend a local Hamiltonian $H$ to cover the whole transversal $\Gamma$, and, due to transversality, $H$ then provides a well-defined parametrization of $\Gamma$. We will then write
$\Gamma \cong [a, b]$, meaning that $\Gamma$ is parametrized by $H$, with the values of $H$ on $\Gamma$ changing from $a$ to $b$.
For a transversal $\Gamma \cong [a, b]$ as above, we denote by $\mathcal D \subset \Gamma$ the set formed by all $x$ such that one of the following holds
\begin{enumerate}
    \item $x$ is an endpoint of $\Gamma$

    \item $\phi_{(0, \infty)}(x)$ intersects $\Gamma$, and the first such intersection is an endpoint of $\Gamma$ 

    \item $\phi_{(0, \infty)}(x)$ does not intersect $\Gamma$ and forms a piece of a stable separatrix ending at a saddle.  
\end{enumerate}
This set is finite: the first criterion gives two points, the second criterion gives at most two points (as can be verified by considering backward trajectories of the endpoints of $\Gamma$), and the number of points satisfying the last criterion is bounded by the total number of separatrices of $(\phi_t)$. 

\begin{proposition} \label{p:first-return-IET}
    Let $\Gamma \cong [a, b]$ be a transversal, $P$ be the first return map induced by $(\phi_t)$ on $\Gamma$, and $\mathcal D \subset \Gamma$ be the finite set defined above. Then, there exists a right-continuous IET $T: [a, b) \mapsto [a, b)$ such that for each $x \in [a, b) \setminus \mathcal D$ its first return point $P(x)$  is well defined and coincides with $T(x)$. Moreover, every discontinuity of $T$ belongs to $\mathcal D$ and  if $P$ is not defined at some $x \in \mathcal D \cap [a, b)$, there exist  a separatrix path connecting $x$ with $T(x)$ and a separatrix path connecting $x$ with $\lim_{y\rightarrow x^-}Ty$ (provided that $x>a$).
\end{proposition}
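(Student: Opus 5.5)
We prove Proposition~\ref{p:first-return-IET} by sweeping $\Gamma$ with the orthogonal flow $(\psi_s)$ and using Proposition~\ref{p:psi-s-star} to control what happens at the finitely many bad points. Since $H$ parametrizes $\Gamma$ and increases with unit speed along $\psi_s$, moving along $\Gamma$ by $\delta$ in the $H$-coordinate is, locally, the same as applying $\psi_\delta$ (after adjusting $\Gamma$ by a flow-box isotopy).

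\emph{Step 1: $P$ is a local translation away from $\mathcal D$.} Take $x\in[a,b)\setminus\mathcal D$. Because $x$ satisfies neither criterion 2 nor criterion 3, the forward trajectory of $x$ returns to $\Gamma$. If it did not, $\phi_{(0,\infty)}(x)$ would either be a stable separatrix piece, which criterion 3 excludes, or avoid $\Gamma$ forever. The second alternative contradicts Poincaré recurrence: area preservation and the flow box around $\Gamma$ give a set of positive measure that returns, and a short argument via Proposition~\ref{p:psi-s-star} rules out non-returning non-separatrix points. The first return point is interior to $\Gamma$, since criterion 2 fails.

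Set $\gamma=\phi_{[0,\tau(x)]}(x)$. A flow-box argument shows that $\psi_s(\gamma)$ is a trajectory segment from $\psi_s(x)$ to $\psi_s(Px)$ for all small $|s|$. Conservation of $H$ along $\phi$, together with $dH/ds=1$ along $\psi$, then gives $P(x+s)=P(x)+s$ in the $H$-coordinates. Here we use that $H$ is a single-valued branch along $\gamma$, so that $P$ acts as $x\mapsto x+\sigma$ with $\sigma=H(Px)-H(x)$, where $\sigma$ is computed by analytic continuation of the local Hamiltonian along $\gamma$.

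\emph{Step 2: constancy of the shift on each component of $[a,b)\setminus\mathcal D$.} Fix a component $J$. Step~1 shows that the set of points of $J$ where $P$ is defined with a given shift $\sigma$ is open. It is also relatively closed in $J$. To see this, consider $\psi_s(\gamma)$ as $s$ increases up to the first $s_*$ at which the sweep degenerates. By Proposition~\ref{p:psi-s-star}, the limit is a separatrix path or a fixed point. A degeneration of this kind happens only when the starting point lies on a stable separatrix, so that it belongs to $\mathcal D$ by criterion 3. The other way the sweep can stop is if the endpoint hits $\partial\Gamma$, which means criterion 2 holds or $x\in\partial\Gamma$. Both cases are excluded inside $J$. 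Hence $P$ is a translation by a constant on each of the finitely many intervals cut out by $\mathcal D$.

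The image intervals have the same lengths as their preimages. They are disjoint because $P$ is injective, since it is invertible via the backward flow. Their union has full measure by area preservation, or more concretely by running the same argument backward in time. So we define $T$ on $[a,b)$ as the right-continuous extension of these translations, and $T$ is a bijective IET. All discontinuities of $T$ lie among the cut points, which are in $\mathcal D$.

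\emph{Step 3: points where $P$ is undefined.} Let $x\in\mathcal D\cap[a,b)$ with $P(x)$ undefined. Take $y\to x^+$, with $y$ in the component adjacent to $x$, and let $\gamma_y$ be the corresponding return segments. Then $\gamma_y=\psi_{H(y)-H(x_0)}(\gamma_{x_0})$ for a fixed $x_0$ in that component. By Proposition~\ref{p:psi-s-star} applied in the direction of negative $s$, the Hausdorff limit of $\gamma_y$ is a separatrix path or a fixed point. The limit starts at $x$ and ends at $\lim_{y\to x^+}P(y)=T(x)$; it cannot be a single fixed point, since $x\in\Gamma$ is not fixed. The same argument with $y\to x^-$ and positive $s$ gives a separatrix path from $x$ to $\lim_{y\to x^-}Ty$.

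\emph{Main obstacle.} The delicate step is the closedness argument in Step~2: showing that the sweep $\psi_s(\gamma)$ can only break down at points of $\mathcal D$. This requires care with the endpoints of the limiting segments, namely whether the start or the end of the degenerate curve reaches a saddle or reaches $\partial\Gamma$. It also requires ensuring that the segment being swept still ends at its first return to $\Gamma$, with no new earlier intersection appearing. I would handle the latter by noting that an earlier intersection would appear through a tangency or through an endpoint of $\Gamma$. Tangency is impossible by transversality, and reaching an endpoint is precisely criterion 2 applied to some point on the orbit.
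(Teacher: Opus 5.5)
Your proposal is correct and follows essentially the same route as the paper. You sweep $\Gamma$-arcs with the orthogonal flow, use Proposition~\ref{p:psi-s-star} to show the sweep can only break down at points of $\mathcal D$ (the content of the paper's Lemmas~\ref{l:good-intervals} and~\ref{l:the-set-D}), invoke Poincar\'e recurrence for non-emptiness, use length preservation and injectivity for the IET structure, and take Hausdorff limits of arcs to get the separatrix paths. The only difference is organizational: you run an open--closed argument on each component of $[a,b)\setminus\mathcal D$, whereas the paper shows that endpoints of maximal good intervals lie in $\mathcal D$. Note that the claim in your Step~1 that every point of $[a,b)\setminus\mathcal D$ returns is really proved only by your Step~2 sweep combined with recurrence-based non-emptiness, exactly as in the paper.
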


Let us now state a sufficient criterion for this IET $T$ to be minimal.
\begin{proposition} \label{p:flows-Keane}
Let $\Gamma \cong [a, b]$ be a transversal such that:
\begin{enumerate}
    \item $\Gamma$ is disjoint from all separatrix connections of $(\phi_t)$;

    \item The endpoints of $\Gamma$ lie on a single $(\phi_t)$-trajectory, which is neither a stable separatrix nor a periodic orbit. The segment of this $(\phi_t)$-trajectory connecting the endpoints of $\Gamma$ only intersects $\Gamma$ at its endpoints. If this $(\phi_t)$-trajectory is an unstable separatrix, then it first intersects $\Gamma$ at one of its endpoints.
\end{enumerate}
    Then, the IET $T$ from Proposition~\ref{p:first-return-IET} has at least one discontinuity and satisfies Keane's condition for minimality, as stated in Proposition~\ref{thm:KeanesCondition}.
\end{proposition}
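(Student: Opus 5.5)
Throughout, $\Gamma\cong[a,b]$ with endpoints $p_a,p_b$ on a single trajectory $\ell$, and $T$ is the IET of Proposition~\ref{p:first-return-IET}. The plan has three steps: identify the orbits of the discontinuities of $T$ with semi-trajectories of $(\phi_t)$, use the hypotheses to rule out finite or colliding orbits, and show separately that at least one discontinuity exists.

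\emph{Orbits as semi-trajectories.} By Proposition~\ref{p:first-return-IET}, every discontinuity of $T$ lies in $\mathcal D$. We translate $T$-orbits of points of $\mathcal D$ into flow trajectories. Take a discontinuity $x\in\mathcal D\cap(a,b)$. Then either $P(x)$ is undefined, or $P(x)$ is defined and equals an endpoint of $\Gamma$. In the first case, $\phi_{(0,\infty)}(x)$ is a stable separatrix piece. The separatrix path from $x$ to $T(x)$ then consists of that piece, some separatrix connections, and an unstable separatrix piece arriving at $T(x)$. Since $\Gamma$ meets no separatrix connection (hypothesis 1), the path must contain exactly one saddle $s$. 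Hence $T(x)$ is the first hit of $\Gamma$ by some unstable separatrix of $s$, and the forward $T$-orbit of $x$ from then on follows $\phi_{(0,\infty)}$ of that unstable separatrix, as long as it avoids $\mathcal D$. In the second case, the forward $T$-orbit of $x$ follows the forward trajectory $\ell$ through the endpoint. Using right-continuity, the image of that endpoint under $T$ is the continuation of $\ell$ (or of its limit, for the point $b$, which is tied to $a$ through the segment of $\ell$ joining $p_a$ and $p_b$).

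\emph{Infinite and disjoint orbits.} Next we check that each orbit $\Omega_j$ is infinite and that the orbits are pairwise disjoint. An orbit is finite only if it is periodic, or if it hits a point where the above continuation fails, i.e.\ a point that is itself a discontinuity.
\begin{enumerate}
\item If the continuation of a discontinuity $x_i$ reaches another discontinuity $x_j$, then two forward semi-trajectories coincide in a way that forces either a separatrix connection (an unstable separatrix of one saddle entering the stable separatrix through $x_j$) meeting $\Gamma$, contradicting hypothesis 1, or forces $\ell$ to be a stable separatrix, contradicting hypothesis 2.
\item Periodicity of the $\ell$-orbit is excluded because $\ell$ is not periodic. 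Periodicity of an unstable-separatrix orbit is excluded because a trajectory that is an unstable separatrix is not periodic.
\item If $\ell$ is itself an unstable separatrix, the extra clause of hypothesis 2 (it first meets $\Gamma$ at an endpoint) ensures that the orbit coming from a saddle and the orbit coming from the endpoint are the same orbit, not two distinct colliding orbits.
\end{enumerate}
Both disjointness and infinitude then follow, since distinct flow trajectories are disjoint.

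\emph{Existence of a discontinuity.} If $T$ had no discontinuity, it would be a rotation by $0$, i.e.\ the identity. Then every trajectory through the interior of $\Gamma$ would be periodic, so the region swept by $\Gamma$ would be an invariant annulus. That annulus would contain $\ell$ in its closure, and $\ell$ would have to be periodic or a separatrix of the boundary polycycle, contradicting hypothesis 2. This step needs care with the endpoint bookkeeping, in particular the points $b$ and the left limits at discontinuities.

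The main obstacle I expect is this bookkeeping: getting the exact correspondence between the right-continuous IET $T$ at the points of $\mathcal D$ and the flow continuation, and making sure that no discontinuity orbit terminates at $a$ or collides with another orbit via the left-limit identification.
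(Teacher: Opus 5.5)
Your proposal follows essentially the paper's route: you track the $T$-orbit of each discontinuity along the forward continuation of an unstable separatrix or of the endpoint trajectory $\ell$ (this is what Lemma~\ref{l:good-intervals} gives), you rule out a return $T^k a_i=a_j$ because it would produce a separatrix connection meeting $\Gamma$, make $\ell$ periodic or a stable separatrix, or make $\ell$ an unstable separatrix hitting $\Gamma$ before its endpoints, and you get $T\neq\mathrm{Id}$ because $T(a)=a$ would force $\ell$ to be periodic or a stable separatrix. Two small corrections are needed. First, hypothesis 1 does not force the separatrix path from $x$ to $T(x)$ to contain only one saddle, since intermediate connections may avoid $\Gamma$; nothing breaks, because you only use the final unstable separatrix. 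Second, disjointness and infinitude of the $\Omega_j$ follow most cleanly from injectivity of $T$ (if $T^n a_i=T^m a_j$ with $n>m$ then $T^{n-m}a_i=a_j$), which reduces Keane's condition to excluding $T^k a_i=a_j$ for $k\ge 1$, exactly as the paper does, rather than from ``distinct trajectories are disjoint.''
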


The rest of this subsection is a proof of these two propositions. We need some notation and lemmas first. 
For $\Gamma$ as above, say that a \emph{ $\Gamma$-arc} is a trajectory segment $A = \phi_{[0, T]}(x_0)$ that begins and ends on $\Gamma$ and is disjoint from both endpoints of $\Gamma$.
For each point $x \in \Gamma$ there is at most one $\Gamma$-arc starting there; denote it by $A(x)$.
Say that $x \in \Gamma$ is \emph{good} if $A(x)$ exists.
The set of good points is open in the topology of $\Gamma$ and does not contain the endpoints of $\Gamma$. Thus, it is a disjoint union of maximal open intervals, which we call \emph{good intervals}.
\begin{lemma} \label{l:good-intervals}
Let $(c, d) \subset \Gamma$ be a good interval. Then there exist one-sided limits 
$\lim_{x \to c^+}  A(x)$ and $\lim_{x \to d^-}  A(x)$
with respect to the Hausdorff distance.
Moreover, each of these limits is either
\begin{enumerate}
    \item a separatrix path\footnote{As $\Gamma$ is a transversal, this separatrix path does not begin or end at a fixed point.} starting and ending on $\Gamma$
    \item or a trajectory segment starting and ending on $\Gamma$ and containing at least one endpoint of $\Gamma$.
\end{enumerate}
\end{lemma}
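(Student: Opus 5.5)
We will deduce the lemma from Proposition~\ref{p:psi-s-star} by moving the arcs $A(x)$ along the orthogonal flow $(\psi_s)$. We treat the endpoint $c$; the endpoint $d$ is symmetric, using the version of Proposition~\ref{p:psi-s-star} for $s<0$ (or reversing the orientation of $\Gamma$).

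\emph{Parametrization.} Parametrize $\Gamma$ by $H$ and orient it so that increasing $H$ is the positive $s$-direction of $(\psi_s)$; since $dH/ds=1$, this is the only case to handle. Fix $x_1\in(c,d)$ and let $\gamma=A(x_1)$, a trajectory segment disjoint from $\mm_*$. For $x\in(c,d)$ near $x_1$, the arc $A(x)$ lies in a small flow box around $\gamma$. It sits on the level $H=H(x)$ and starts and ends on $\Gamma$. So $A(x)$ coincides with $\psi_{s}(\gamma)$ for $s=H(x)-H(x_1)$, after trimming or extending its two ends along the flow so that they land on $\Gamma$. Here we use the remark after the lemma defining $(\psi_s)$ that $\psi_s$ maps trajectory segments to trajectory segments. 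Continuing this identification, $A(x)$ for $x\in(c,x_1]$ is obtained from the family $\psi_s(\gamma)$, $s\in[H(c)-H(x_1),0]$, followed by adjusting the ends to $\Gamma$.

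\emph{Case split at $s_0=H(c)-H(x_1)$.}
\begin{enumerate}
\item If $\psi_s(\gamma)$ stays in $\mm\setminus\mm_*$ up to and including $s_0$, then $\psi_{s_0}(\gamma)$, adjusted to $\Gamma$, is a genuine trajectory segment from $c$ to $\Gamma$. Since $c$ is not good, this segment must contain an endpoint of $\Gamma$: either $c$ itself, or an endpoint hit before returning. Hausdorff continuity of the limit follows from continuous dependence of flow-box trajectories on initial conditions. This gives alternative 2.
\item Otherwise the blow-up time $s_*$ of Proposition~\ref{p:psi-s-star} lies in $[s_0,0)$. It cannot lie strictly inside, because for $s\in(s_0,0]$ the arc $A$ exists and avoids $\mm_*$. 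So $s_*=s_0$, and the Hausdorff limit is a separatrix path or a single fixed point. A single fixed point is impossible, because the limit arcs still start at points of $\Gamma$ near $c$, which is a regular point. Hence the limit is a separatrix path, and the end adjustments show that it begins and ends on $\Gamma$. This gives alternative 1.
\end{enumerate}

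\emph{Main obstacle: the end adjustments.} The hard step is controlling the trimming of $\psi_s(\gamma)$ near the two ends. The endpoints of $\psi_s(\gamma)$ need not lie on $\Gamma$: $\psi_s$ moves points orthogonally, while $\Gamma$ is only transversal. We must show that the correction times stay uniformly bounded as $s\to s_0$, so that Hausdorff limits commute with the correction. This follows from transversality of $\Gamma$ and compactness. The starting points $x$ converge to $c\in\Gamma$, which is a regular point. Near the terminal end, the return points of $A(x)$ on $\Gamma$ lie in a compact transversal away from $\mm_*$. A flow-box argument there gives bounded, continuous correction times, and the limit arcs land on $\Gamma$.
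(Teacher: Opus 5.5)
Your skeleton matches the paper's: transport one $\Gamma$-arc by $(\psi_s)$, apply Proposition~\ref{p:psi-s-star}, and split according to whether $\psi_s(\gamma)$ survives up to the level of the endpoint. The only difference is how you handle the mismatch between $\Gamma$ and the $\psi$-orbits. The paper deforms $\Gamma$ so that near the endpoint it is itself a $\psi$-trajectory, and takes $x_0$ on that piece, so the start of $\psi_s(A_0)$ stays on $\Gamma$. You keep $\Gamma$ and correct the ends along $(\phi_t)$. That alternative is legitimate, but as written the correction step fails, because you take $x_1\in(c,d)$ arbitrary and continue the identification over the whole range $s\in[s_0,0]$.

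The ends of $\psi_s(\gamma)$ move along the $\psi$-orbits of $x_1$ and $P(x_1)$, not along $\Gamma$. Over a parameter range of length $|H(c)-H(x_1)|$ these orbits can drift far from $\Gamma$. Your remarks that the points $x$ tend to the regular point $c$, and that return points lie on a compact transversal, concern the ends of $A(x)$, not the ends of $\psi_s(\gamma)$, so they do not bound the corrections.

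Concretely, take a chart with $H=xy$, $X=(x,-y)$, the Euclidean metric, $\Gamma\subset\{x=1\}$ and $x_1=(1,1)$. Then $\psi_s(x_1)=(\sqrt{1+s},\sqrt{1+s})=\phi_{\tau(s)}(x(s))$, where $x(s)=(1,1+s)$ and $\tau(s)=\tfrac12\ln(1+s)$. Suppose $(c,x_1]$ contains the point $(1,0)$ of the unstable separatrix; nothing forbids this, since goodness only concerns forward arcs. Then $\psi_s(\gamma)$ hits the saddle at $s=-1\in(s_0,0)$, although every $A(x)$ with $x\in(c,x_1]$ is a genuine $\Gamma$-arc. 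This has three consequences:
\begin{enumerate}
\item your Case~2 assertion that $s_*$ cannot lie strictly inside is false;
\item Proposition~\ref{p:psi-s-star} then describes the limit of the wrong family;
\item your exclusion of a single fixed point also breaks, since it is $\lim\psi_s(x_1)$, not $x$, that would have to stay near $\Gamma$.
\end{enumerate}

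The repair is to localize. Extend $\Gamma$ slightly to a transversal $\Gamma'$, and pick $r>0$ so that the $r$-neighbourhood of $\Gamma$ lies in an embedded collar $\phi_{(-\eta,\eta)}(\Gamma')$. Let $R$ bound $|v_\psi|$ on that neighbourhood, and choose $x_1$ so close to $c$ that $R\,|H(c)-H(x_1)|<r$. Then both ends of $\psi_s(\gamma)$ stay in the collar for all $s\in[s_0,0]$. The corrections are the collar's time coordinates, which are bounded by $\eta$ and continuous up to $s_0$. The overhangs stay in the collar, hence avoid $\mm_*$, so $\psi_s(\gamma)$ is defined for every $s\in(s_0,0]$, and your two cases go through. (Minor point: with your orientation it is $c$, not $d$, that needs the $s<0$ version of the proposition.)
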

\begin{proof}
Suppose for a moment that $\Gamma$ is a trajectory of the orthogonal flow $(\psi_s)$.
Take any $x_0 \in (c, d)$ and let $A_0 = A(x_0)$ be its $\Gamma$-arc.
Then other $\Gamma$-arcs $A(x)$, $x \in (c, d)$, are images of $A_0$ under the orthogonal flow: $A(x) = \psi_{x-x_0}(A_0) \subset \mm \setminus \mm_*$.  
Consider now the endpoint $d$ of the good interval, the other endpoint is treated symmetrically. There are two possibilities:

\noindent \emph{Case 1:} $\psi_{d-x_0}(A_0) \subset \mm \setminus \mm_*$ is well defined. Then this set is a segment of a $\phi_t$-trajectory that begins and ends on $\Gamma$.   
As this trajectory segment is not a $\Gamma$-arc, it must contain an endpoint of $\Gamma$.  

\noindent \emph{Case 2:} $\psi_{d-x_0}(A_0) \subset \mm \setminus \mm_*$ is not well defined. Then by Proposition~\ref{p:psi-s-star} there is a limit 
$\lim_{x \to d^{-}} A(x)$, which is a separatrix path or a fixed point of $(\phi_t)$. As this limit must begin and end on $\Gamma$, we conclude that it is a separatrix path.

For the general case we simply note that by performing small deformations of $\Gamma$ along the trajectories of $(\phi_t)$ going through $\Gamma$, one can assume without loss of generality that at every endpoint of a good interval $(c,d)$, there is a trajectory segment of $\psi_s$ which is a subinterval of $\Gamma\cap [c,d]$ containing either $c$ or $d$.
\end{proof}

\begin{lemma} \label{l:the-set-D}
    The set of all good points on $\Gamma$ coincides with $\Gamma \setminus \mathcal D$.
\end{lemma}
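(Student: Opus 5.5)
We will show the two inclusions separately, unpacking the definitions of good points and of $\mathcal D$.

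\emph{Good points lie in $\Gamma\setminus\mathcal D$.} Let $x$ be good, so the $\Gamma$-arc $A(x)=\phi_{[0,T]}(x)$ exists and avoids both endpoints of $\Gamma$. We check the three criteria defining $\mathcal D$.
\begin{enumerate}
\item $x$ is not an endpoint of $\Gamma$, since $A(x)$ avoids the endpoints and $x\in A(x)$.
\item The first intersection of $\phi_{(0,\infty)}(x)$ with $\Gamma$ is a point of $A(x)$, because $A(x)$ ends on $\Gamma$. It is therefore not an endpoint.
\item $\phi_{(0,\infty)}(x)$ does intersect $\Gamma$, so the third criterion is excluded as well.
\end{enumerate}
Hence $x\notin\mathcal D$.

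\emph{Points of $\Gamma\setminus\mathcal D$ are good.} Take $x\in\Gamma\setminus\mathcal D$. Then $x$ is not an endpoint of $\Gamma$. There are two cases.

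\emph{Case 1: $\phi_{(0,\infty)}(x)$ meets $\Gamma$.} Since $\Gamma$ is a transversal of finite length, the set of return times is discrete near $0$, so there is a first hitting time $T>0$. By the second criterion, $\phi_T(x)$ is not an endpoint. We also need that no point of $\phi_{(0,T)}(x)$ is an endpoint of $\Gamma$, and this holds because those points do not lie on $\Gamma$ at all. Hence $\phi_{[0,T]}(x)$ is a $\Gamma$-arc and $x$ is good.

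\emph{Case 2: $\phi_{(0,\infty)}(x)$ never meets $\Gamma$.} We must reach a contradiction. By the third criterion, the forward semi-trajectory is not a piece of a stable separatrix. The point $x$ is not fixed, being on a transversal. So $\omega(x)$ is not a single fixed point: an $\omega$-limit equal to a fixed point would make the trajectory a stable separatrix.

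We then argue by recurrence. The flow preserves the smooth area $\omega$. Thicken a small neighborhood of $x$ in $\Gamma$ into a flow box $U=\phi_{(-\epsilon,\epsilon)}(J)$ with $J\subset\Gamma$ open around $x$.

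\begin{itemize}
\item If the forward orbit of $x$ never returns to $U$, compare with nearby points of $J$. We expect the nearby points to be good, since the good set is open. By Lemma~\ref{l:good-intervals}, the limit of $A(y)$ as $y\to x$ exists and is either a separatrix path or a trajectory segment through an endpoint.
\item Consider the limit of $A(y)$ as $y\to x$ from one side. If it is a trajectory segment starting at $x$ and containing an endpoint of $\Gamma$, then $\phi_{(0,\infty)}(x)$ meets $\Gamma$, contradicting Case 2.
\item If it is a separatrix path beginning at $x$, then $\phi_{[0,\infty)}(x)$ is a stable separatrix piece, contradicting the third criterion.
\end{itemize}

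This argument requires $x$ to be the endpoint of a good interval, that is, good points must accumulate at $x$. To secure this, use Poincaré recurrence: almost every point of $J$ returns to $U$, and hence to $J$, in forward time. Such a point $y$ has a first return to $\Gamma$. That first return avoids the endpoints of $\Gamma$ except on a finite set, because only finitely many points hit the endpoints first. Such $y$ are therefore good and accumulate at $x$ from both sides. Hence $x$ lies in the closure of the good set but not in it, so $x$ is an endpoint of a good interval, and Lemma~\ref{l:good-intervals} applies.

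The main obstacle is justifying that the Hausdorff limit $\lim_{y\to x^\pm}A(y)$ indeed begins at $x$ and contains $\phi_{[0,\infty)}(x)$, or a piece of it reaching $\Gamma$. This follows from continuous dependence of trajectories on initial conditions over compact time intervals. Since $\phi_{[0,t]}(y)\to\phi_{[0,t]}(x)$ for every fixed $t$, the limit set contains the whole forward semi-trajectory of $x$. For a limit consisting of a trajectory segment ending on $\Gamma$, this forces $\phi_{(0,\infty)}(x)$ to reach $\Gamma$. For a limit that is a separatrix path, the limit starts at $x$, and its first piece is the stable separatrix piece through $x$. Both outcomes give the contradiction in Case 2, so $\Gamma\setminus\mathcal D$ consists of good points, and the two sets coincide.
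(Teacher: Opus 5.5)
The gap is in Case~2, at the sentence ``Hence $x$ lies in the closure of the good set but not in it, so $x$ is an endpoint of a good interval.'' Your first inclusion and your Case~1 are fine; the paper uses the Case~1 reasoning essentially verbatim.

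The inference in Case~2 does not hold for general open sets. An open set of full measure can have infinitely many components accumulating at a point of its complement, without that point being an endpoint of any component. Think of the complement of the middle-thirds Cantor set near $1/4$. If infinitely many good intervals accumulate at your $x$, then Lemma~\ref{l:good-intervals} gives no one-sided limit of $\Gamma$-arcs at $x$, and your contradiction never gets started. Density of good points, which is all your recurrence argument provides, cannot rule this out. You need to know separately that the good set has only finitely many components.

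The paper obtains this by reordering the argument. Run your case analysis only at an endpoint $x_*$ of a good interval, where Lemma~\ref{l:good-intervals} does apply:
\begin{itemize}
\item If $x_*$ is an endpoint of $\Gamma$, it lies in $\mathcal D$.
\item If $\phi_{(0,\infty)}(x_*)$ meets $\Gamma$, the first hit must be an endpoint of $\Gamma$. Otherwise $x_*$ would be good, which is impossible for an endpoint of a maximal good interval. So criterion~2 holds.
\item Otherwise, the one-sided limit of $A(x)$ must be a separatrix path, so $x_*$ lies on a stable separatrix and criterion~3 holds.
\end{itemize}
Hence every endpoint of a good interval lies in the finite set $\mathcal D$. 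Distinct good intervals are disjoint, so there are only finitely many of them. Good points are dense, so the complement of the good set in $\Gamma$ is a finite set of such endpoints, and therefore lies in $\mathcal D$.

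A smaller point: your claim that the Hausdorff limit contains all of $\phi_{[0,\infty)}(x)$ tacitly uses that the return times of nearby good points tend to infinity. This is true, and you should say why. Return times near $x$ are bounded below by transversality. A bounded subsequence of them would therefore produce a return of $x$ itself to the closed curve $\Gamma$, contradicting Case~2.
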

\begin{proof}
Let us take an endpoint $x_*$ of a good interval $\mathcal I$ and prove that $x_* \in \mathcal D$. 
Suppose $x_*$ is not an endpoint of $\Gamma$, otherwise $x_* \in \mathcal D$ by criterion~1 from the above definition of $\mathcal D$.
Consider first the case when $\phi_{(0, \infty)}(x_*)$ intersects $\Gamma$, and let $y_*$ be the first point of intersection. Then $y_*$ is a boundary point of $\Gamma$, otherwise the trajectory segment connecting $x_*$ with $y_*$ would be a $\Gamma$-arc, and $x_*$ would be a good point. Then $x_* \in \mathcal D$ by criterion 2. 
Now only the case when $\phi_{(0, \infty)}(x_*)$ does not intersect $\Gamma$ remains.  By Lemma~\ref{l:good-intervals} the $\Gamma$-arcs $A(x)$, $x\in\mathcal I$, approach 
a one-sided limit $A_*$ as $x$ approaches $x_*$. The limit $A_*$ is a separatrix path or a trajectory segment that begins and ends on $\Gamma$.
As $\phi_{[0, \infty)}(x_*)$ does not intersect $\Gamma$, only the case of a separatrix path is possible. Then $x_*$ lies on a stable separatrix, so $x_* \in \mathcal D$ by the third criterion.

We have proved that both endpoints of any good interval must belong to the finite set $\mathcal D$. Because $(\phi_t)$ is continuous, the set of good points is dense in $\Gamma$ by the Poincar{\'e} recurrence theorem. 
Therefore, it contains $\Gamma \setminus \mathcal D$. To prove that the set of good points equals $\Gamma \setminus \mathcal D$, we note that for any $x \in \mathcal D$ there is clearly no $\Gamma$-arc starting at $x$, so $x$ is not good.
\end{proof}

\noindent We are now ready to prove that the first return maps are IETs.
\begin{proof}[Proof of Proposition~\ref{p:first-return-IET}]
As we have established above, $[a, b] \setminus \mathcal D$ is a finite union of good intervals. Note that for any good $x$, $P(x)$ is defined and there is a $\Gamma$-arc $A$ connecting $x$ with $P(x)$. Since $(\phi_t)$ is continuous and locally Hamiltonian, we see that the image of each good interval under $P$ is an interval of the same length. Furthermore, since $P$ is the Poincar{\'e} first return map, the images under $P$ of the good intervals are pairwise disjoint. Letting $T$ be the right-continuous extension of $P$, we obtain that $T$ is a right-continuous IET with the property that each of its discontinuities belongs to $\mathcal D$.
Finally, by Lemma \ref{l:good-intervals} for every $x\in[a,b)\cap\mathcal D$ at which $P$ is not defined there is a separatrix path connecting $x$ with $T(x)$ and, in the case that $x\in(a,b)$, there is a separatrix path connecting $x$ with $\lim_{y\rightarrow x^{-}}Ty$.
\end{proof}

\begin{proof}[Proof of Proposition~\ref{p:flows-Keane}]
Let $T:[a,b)\rightarrow[a,b)$ be the IET from Proposition~\ref{p:first-return-IET}.
If $x \in [a, b) \setminus \mathcal D$, then $T(x)$ is the point of first return of the forward semi-trajectory of $x$ to $\Gamma$, and thus $x$ and $T(x)$ are connected by a trajectory of $(\phi_t)$.
Take any $x_0 \in \mathcal D \cap [a, b)$, then $T(x_0)=\lim_{x \to x_0^+} T(x)$, and by Lemma~\ref{l:good-intervals} there is a trajectory or a separatrix path that connects $x_0$ with $T(x_0)$ (and if it is a trajectory, it passes through an endpoint of $\Gamma$). 
Iterating this shows that any two points on the same $T$-orbit are connected by a trajectory or by a separatrix path.
Thus, because $a$ does not lie on a stable separatrix or a periodic trajectory of $(\phi_t)$, we see that $T\ne\text{Id}$ and, so, $T$ has at least one discontinuity.

We are now ready to check Keane's condition. 
It is enough to prove that for any two discontinuities $x,y$ (which may coincide) of $T$ there is no positive $n\in\N$ with $T^nx=y$.
Notice that by the conditions imposed on $\Gamma$, there is at most one $x_0\in \mathcal D\cap (a,b)$ whose forward $(\phi_t)$-orbit goes through both endpoints of $\Gamma$. If such an $x_0$ exists, our assumptions about $\Gamma$ also imply that $x_0$ does not lie on an stable or unstable separatrix. Thus, in this case, for any two discontinuities $x,y$ of $T$ there is no $n\in\N$ with $T^nx=y$. (If such an $n$ existed we would either get that a separatrix connection intersects $\Gamma$ or that the $(\phi_t)$-orbit going through the endpoints of $\Gamma$ is either periodic or a separatrix.)
If such an $x_0$ does not exist, we again obtain that for any discontinuities $x,y$ of $T$ there is no $n\in\N$  for which $T^nx=y$ (the existence of such an $n$ implies that a separatrix connection intersects $\Gamma$). 
\end{proof}

\subsection{Sketch of the proof of Theorem~\ref{t:decomposition}}
Although Theorem~\ref{t:decomposition} is classical (see the book by Nikolaev and Zhuzhoma~\cite[Theorem 3.1.7]{NZ} and cited there works by Katok~\cite{Kat73} and Zorich~\cite{Zor94}, see also Maier's theorem~\cite{Mai}), the only proof we are aware of is a (sketch of) proof written in~\cite{NZ}, and following it involves going back to papers published in the first half of the 20th century. For reader's convenience, we provide a sketch of a different proof, which is based on the propositions established above.
\begin{enumerate}
\item Cut $\mm$ by all polycycles; the resulting domains will be the domains $\mm_i$. Clearly, their interiors are non-empty and disjoint. Unless there is only one domain $\mm_1 = \mm$, each domain claims one side of any separatrix connection bounding it, so there is a finite number of $\mm_i$.

\item Take $\mm_i$ that has a periodic orbit inside. Moving the periodic orbit by the orthogonal flow $(\psi_s)$ continues this periodic orbit in each direction.
By Proposition~\ref{p:psi-s-star}, it either degenerates into a polycycle or a single fixed point, or it can move indefinitely. 
If the periodic orbit degenerates into a polycycle for both directions, this shows that $\mm_i$ is an annulus filled by periodic orbits.
If it degenerates into a polycycle for one direction and into a point for another, we get a disc with a fixed  point inside.
If it degenerates into a point in both directions, we get the equatorial flow on the sphere.
If it continues indefinitely in any of the directions, it must return to the original orbit by compactness\footnote{Take any point $p$ on the periodic orbit $\gamma$ and suppose it has infinite positive $\psi_s$-trajectory. Let $\omega(p)$ be the omega-limit set of this trajectory. By compactness, $\omega(p)$ is non-empty and connected. If $\omega(p) \subset \mm_*$, by connectedness it must be just one point, and, so, the forward $\psi_s$-trajectory of $p$ cannot be infinite due to $\frac{dH}{ds}=1$. This shows that there is $q \in \omega(p) \setminus \mm_*$.
We now see by considering a small enough $\phi_t$-flowbox neighborhood of $q$ that there are $s_1,s_2>0$ for which $\psi_{s_1}(p),\psi_{s_2}(p),q$ have the same (periodic) $\phi_t$-trajectory. Thus, $\psi_{|s_2-s_1|}(\gamma)=\gamma$.}, so the whole $\mm$ is filled by periodic orbits. Then by Poincar\'e-Hopf index theorem $\mm$ is the torus.
In all these cases, we see that the closure of the continuous family of periodic orbits fills the whole $\mm_i$.
\item Let us now take $\mm_i$ that has no periodic orbits inside it and prove that it is a quasiminimal domain. Take any curve transversal to the flow $\tilde \Gamma\subseteq \mm_i$ and suppose that it contains its endpoints and is disjoint from every fixed point and separatrix connection. By Proposition \ref{p:first-return-IET}, there is an $x_0\in\tilde \Gamma$ for which $\phi_\R(x_0)$ is not a separatrix and for which there is a least $\tau>0$  with $\phi_\tau(x_0)\in\tilde\Gamma$. Let $\Gamma:=[a,b]$ be the segment of $\tilde \Gamma$ joining $x_0$ with $\phi_\tau(x_0)$ and let $T:[a,b)\rightarrow [a,b)$ and $\mathcal D$ be as defined in Proposition \ref{p:first-return-IET}. Let $a=a_0<\cdots<a_N=b$ be an enumeration of the members of $\mathcal D$. Note that $N>0$ and set 
\begin{equation}\label{e.DefnLambda}
\Lambda:=\overline{\bigcup_{x\in [a,b)}\phi_\R(x)} =\bigcup_{j=0}^{N-1}\overline{\{\phi_\R(x)\,|\,x\in (a_j,a_{j+1})\}}.
\end{equation}
Thus, by Lemma \ref{l:good-intervals} (and our assumptions on $\Gamma$), for any $x\in \Lambda$, if $\phi_\R(x)$ fails to be either a stable or an unstable separatrix, then $\phi_\R(x)$ intersects $[a,b)$.

We now check that if $\phi_\R(x)\subseteq \Lambda$ is not an stable separatrix, then $\phi_{[0,\infty)}(x)$ is dense in $\Lambda$. By the previous argument we see that $\phi_\R(x)$ intersects $\Gamma$. So, because Proposition~\ref{p:flows-Keane} implies that $T$ satisfies Keane's condition, we obtain that $\phi_{[0,\infty)}(x)$ is dense in $\Lambda$. Since the fact that $T$ satisfies Keane's condition implies that $T^{-1}$ also does, a similar argument shows that if $\phi_\R(x)\subseteq\Lambda$ is not an unstable separatrix, then $\phi_{(-\infty,0]}(x)$ is dense in $\Lambda$.

To complete the proof, it remains to show that $\Lambda=\mm_i$. First note that because  $\phi_\R(x_0)$ is dense in $\Lambda$, $\Lambda$ is connected. Also note that by \eqref{e.DefnLambda}, the boundary of $\Lambda$ is formed by fixed points and separatrix connections. Thus, because fixed points and separatrix connections do not separate $\mm_i$, we obtain the desired result.
\qed
\end{enumerate}

\subsection{Proof of Theorem~\ref{thm:AnalyticFlowRepresentation}}
 As was shown in the proof of Theorem~\ref{t:decomposition},
  Proposition~\ref{p:first-return-IET} and Proposition~\ref{p:flows-Keane} already give that $(\phi_t)|_{\mm_i}$ is isomorphic to a special flow over a minimal IET  whose roof function $f$ is defined as the time of first return of the forward $\phi_t$-trajectory of a point $x\in [a,b)$ to $[a,b)$. However, we must ensure  that the properties of the roof function $f$  are satisfied. This will be achieved by a particular choice of the transversal, namely, both its endpoints will lie on the same unstable separatrix of some saddle.
  
  \begin{proof}[Proof of Theorem~\ref{thm:AnalyticFlowRepresentation}] To proceed with this plan, we first need to find a point $q \in \intt \mm_i$ that lies on some unstable separatrix which is not a separatrix connection. Take an arbitrary transversal $\Gamma_2\subset \intt \mm_i$ disjoint from all separatrix connections of $(\phi_t)$ and consider an aperiodic $(\phi_t)$-trajectory $\gamma$ in $ \intt \mm_i$ which is not a separatrix. Because $\gamma$ is dense in $\mm_i$, it intersects $\Gamma_2$ in at least two points. Let $\Gamma_1 \subset \Gamma_2$ be the curve segment joining some two points of consecutive intersections of $\gamma$ with $\Gamma_2$. Notice that $\Gamma_1$ satisfies  the assumptions of Proposition~\ref{p:flows-Keane}.
  
  Consider the \emph{inverse time} first return map $P_{inv}: \Gamma_1 \mapsto \Gamma_1$. We claim that one can find a sequence of points $r_n\in\Gamma_1$ for which the $\phi_t$-trajectory joining $r_n$ with $P_{inv}(r_n)$ becomes arbitrarily long. Indeed, since $\mm$ has at least one $(\phi_t)$ fixed point, $\mm_i$ also has at least one $(\phi_t)$-fixed point (if $\mm_i \ne \mm$ there must be a fixed point on its boundary, and if $\mm_i = \mm$ it inherits the fixed point). Because $\mm_i$  is a quasi-minimal domain, points  in $\mm_i$  with  both forward and backward semi-trajectories being  arbitrarily close to this fixed point can be found. Both forward and backward semi-trajectories of any such point intersect $\Gamma_1$  in a dense subset. Thus, one can find a $(\phi_t)$-trajectory starting and ending at $\Gamma_1$ of arbitrarily large length. 
  
  By Proposition~\ref{p:first-return-IET} (applied in inverse time), any discontinuity point $x \in \Gamma_1$ of $P_{inv}$ either has the backward semi-trajectory passing through an endpoint of $\Gamma_1$, or this backward semi-trajectory is a piece of unstable separatrix of some saddle.
  As the return time is unbounded, there is a discontinuity point of separatrix type, which we take as $q$. By our assumptions on $\Gamma_1$, we have that $q$ is not an endpoint of $\Gamma_1$ (no separatrix goes through the endpoints of $\Gamma_1$) and $\phi_\R(q)$ is not a stable separatrix (no separatrix connection intersects $\Gamma_1$). 

  Now, take a  subinterval $\Gamma_0$ of $\Gamma_1$  that has $q$  as its right endpoint and is so short that it avoids separatrix connections, lies in $\intt \mm_i$, and the backward semi-trajectory of $q$ (which is an unstable separatrix) does not intersect $\Gamma_0$. As the forward semi-trajectory of $q$ is not a stable separatrix, it is dense in $\mm_i$. Take the point $q'$ where it first returns to $\Gamma_0$ and let $\Gamma \subset \Gamma_0$ be the segment connecting $q$ and $q'$. Then $\Gamma \cong [a, b]$ satisfies the assumptions of Proposition~\ref{p:flows-Keane}. 
  
 Let $T$ and $\mathcal D$ be as defined in Proposition~\ref{p:first-return-IET} and let $f:(a,b)\setminus\mathcal D\rightarrow (0,\infty)$ be the function defined by the property that $f(x)=\tau$ where $\tau$ is the least positive real number with $\phi_\tau(x)=Tx$ (In other words, $f$ is the time of first return of the $(\phi_t)$-forward trajectory of $x$ to $\Gamma$). Notice that by our choice of the endpoints of $\Gamma$, for every $x\in (a,b)\cap \mathcal D$, the forward $(\phi_t)$-trajectory of $x$ ends in a saddle before returning to $\Gamma$. Noting that by Proposition~\ref{p:flows-Keane} $T$ is minimal and that the forward $(\phi_t)$-trajectory of the endpoint $q$ (which goes through the other endpoint $q'$) is dense in $\mm_i$, it is easy to check that all the conditions in the statement of Theorem~\ref{thm:AnalyticFlowRepresentation} are satisfied. We leave the details to the reader.
  \end{proof}

\section{Analytic singularities of area-preserving flows} \label{s:analytic-singularities}
\subsection{Results}
In this section, we describe how an analytic Hamiltonian flow behaves near an isolated saddle (i.e., near an isolated fixed point which is not a local extremum of the Hamiltonian). Clearly, this applies to locally Hamiltonian flows as well. A particularly important feature for us is that the time it takes for a trajectory to pass a neighborhood of the critical point depends "nicely" on the difference $h$ between the values of the Hamiltonian at the trajectory and at the fixed point. Namely, it is given by a converging series involving rational powers of $h$, possibly multiplied by $\ln h$, see Definition~\ref{d:lp} below. This will be later used to study the special flow provided by Theorem~\ref{thm:AnalyticFlowRepresentation}. 

Let us now proceed with precise statements. 
Suppose $H(x, y)$ is an analytic function such that the origin $(0, 0)$ is an isolated critical point, which is not a local extremum. 
Subtracting a constant, we can assume $H(0, 0) = 0$.
Let $B_R$ denote the closed ball with radius $R$ centered at the origin $(0, 0)$ and let $\partial B_{R}$ be its boundary.
The critical level set of an analytic function near its critical point is described by the classical Puiseux Theorem.
However, for our purposes, the following simplified description is enough. 
\begin{lemma} \label{l:separatrices}
If $\rho$ is small enough, the zero level set $\{ (x, y) \in B_\rho : H(x, y) = 0 \}$ is a union of finitely many disjoint (except at the origin) analytic curves connecting the origin with $\partial B_{\rho}$, and they intersect $\partial B_{\rho}$ transversally. 
We will call these curves \emph{separatrices}.
\end{lemma}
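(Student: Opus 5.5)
The plan is to reduce the statement to the Newton--Puiseux theorem applied to the zero set of $H$, and then to obtain transversality from the fact that each branch leaves the origin with a well-defined tangent direction.

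First, note that $H$ is not identically zero near the origin, since the origin is an isolated critical point. After a linear change of coordinates we may assume that $H(0,y)\not\equiv 0$, i.e. $H$ is regular in $y$. The Weierstrass preparation theorem then gives, near the origin, $H = u\cdot P$ with $u$ a unit and $P(x,y)=y^m+a_1(x)y^{m-1}+\dots+a_m(x)$ a Weierstrass polynomial. So $\{H=0\}=\{P=0\}$ near $0$.

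Next, I argue that $H$ has no repeated irreducible factor. If $H=g^2 k$, then $\nabla H$ would vanish on all of $\{g=0\}$, a curve through the origin (or $g$ is a unit), contradicting isolatedness of the critical point. Hence $P$ is reduced, and its discriminant $\Delta(x)$ is not identically zero. By the Newton--Puiseux theorem, for $x$ small and positive (and separately for negative $x$), the roots of $P(x,\cdot)$ are given by convergent fractional power series $y=\sum_{k\ge k_0} c_k x^{k/n}$ with $k_0\ge 1$. Grouping conjugate roots, each irreducible factor of $P$ contributes a set $\{(t^n, \sum c_k t^k)\}$ for $t$ in a small disc. The real points of this set form finitely many real-analytic half-branches emanating from $0$: substituting $t\mapsto\zeta t$ with $\zeta^n=1$, the real points are the images of real rays $t>0$ or $t<0$ of suitable real parametrizations. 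If $x\equiv 0$ on some component, we use the symmetric argument with $x$ and $y$ swapped; this was already excluded by the choice of regular direction, except for the branch $x=0$ itself, which is trivially analytic.

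Away from the origin these half-branches are pairwise disjoint, because a common point of two distinct roots for $x\neq 0$ small would be a zero of $\Delta$, and $\Delta$ has an isolated zero at $0$. They are also smooth away from $0$, since the parametrization $t\mapsto (t^n,\phi(t))$ is injective and immersive for $t\ne 0$. Since the origin is a saddle (not an extremum), $\{H=0\}\setminus\{0\}$ is nonempty in every neighborhood of $0$, so there is at least one branch; this is not needed but is consistent with the statement.

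Transversality to $\partial B_\rho$, which I expect to be the main point, comes from the asymptotics along each half-branch $\gamma(t)=(t^n,\phi(t))$, $t\in(0,\epsilon)$. Its leading term is $\gamma(t)= v t^{p}+O(t^{p+1})$ with $v\neq 0$ and $p\ge 1$, and $\gamma'(t)=p v t^{p-1}+O(t^p)$. Hence
$$\frac{d}{dt}|\gamma(t)|^2 = 2\langle\gamma,\gamma'\rangle = 2p|v|^2t^{2p-1}(1+O(t))>0$$
for small $t$. So $|\gamma|$ is strictly monotone on each half-branch. Choosing $\rho$ small enough that this holds on all finitely many half-branches, each branch meets $\partial B_\rho$ exactly once and transversally, and connects the origin to $\partial B_\rho$. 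The only delicate bookkeeping is to make sure the branches cover all of $\{H=0\}\cap B_\rho$. This follows because every zero of $P$ with small $x$ is one of the Puiseux roots, and there are only finitely many such roots, hence finitely many curves.
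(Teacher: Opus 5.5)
Your route is valid in outline and differs from the paper's, but one intermediate claim is false as stated. You argue that $H$ has no repeated factor because $\nabla H$ would vanish on the curve $\{g=0\}$. Over $\mathbb R$, however, the zero set of a non-unit $g$ can be just the origin.

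Take $H=x(x^2+y^2)^2$. Then $\partial_x H=(x^2+y^2)(5x^2+y^2)$ and $\partial_y H=4xy(x^2+y^2)$, so the origin is its only real critical point. Since $H$ changes sign across $\{x=0\}$, it is a saddle in the paper's sense. Nevertheless $(x^2+y^2)^2$ divides $H$ in every rotated coordinate system, the Weierstrass polynomial has the form $(y-ax)(y^2+x^2)^2$, and $\Delta\equiv 0$. Complexifying does not rescue the claim: the complex critical set of this $H$ contains the lines $x=\pm iy$.

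Your disjointness step, which uses that $\Delta$ has an isolated zero at $0$, therefore breaks down. The fix is immediate. Replace $P$ by its square-free part $P_{\mathrm{red}}$, the product of the distinct monic irreducible factors of $P$ in $\mathbb R\{x\}[y]$. It has the same zero set and, in characteristic zero, a discriminant that is not identically zero. The rest of your argument then goes through unchanged.

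The paper deduces the lemma from Hironaka's resolution (Theorem~\ref{t:resolution}) instead. There, $\pi^{-1}(A\cap B_r)$ is a finite union of analytic arcs given locally by $x_1=0$ or $y_1=0$. On each arc not collapsed to the origin, the pulled-back squared distance $(x^2+y^2)\circ\pi$ is a non-negative analytic function of one variable with finitely many zeros. It is therefore strictly monotone on either side of each zero, which gives at once the arcs from the origin to $\partial B_\rho$ and their transversality. Disjointness away from $0$ is attributed to the absence of other critical points of $H$ in $B_\rho$.

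Your computation $\frac{d}{dt}|\gamma|^2=2p|v|^2t^{2p-1}(1+O(t))$ is exactly this one-variable monotonicity made explicit on a Puiseux parametrization. So the core mechanism is the same, and the difference lies in how the branches are produced.

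Your Weierstrass--Puiseux route is classical and self-contained for plane curves, and it gives explicit parametrizations $(t^n,\psi(t))$ and tangent directions. It does require bookkeeping of the real points of complex branches. You should state that $r\mapsto\mathrm{Im}\,\phi(\zeta r)$ is real-analytic, hence either identically zero or zero-free on some $(0,\epsilon)$; that is what makes the real locus a union of whole half-branches. It also requires the reducedness repair above.

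The paper's route invokes a much heavier theorem. That theorem is needed anyway for the area and passage-time asymptotics in Theorem~\ref{t:area}, so there the lemma comes essentially for free.
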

\noindent This lemma is proved below in Section~\ref{s:resolution}.
Let $(\phi_t)$ denote the Hamiltonian flow~\eqref{e:hamilton-equations} given by $H$, the origin is an isolated fixed point of this flow. 
In a small neighborhood of the origin, the curves $H=0$ coincide with the (local) separatrices of the origin for the Hamiltonian flow $(\phi_t)$, thus justifying how we reuse the term "separatrix" that was already introduced in Section~\ref{s:flows1}.
Note that by Lemma~\ref{l:separatrices} an analytic fixed point of a locally Hamiltonian flow has finitely many separatrices. Therefore, \emph{the assumptions used in Sections~\ref{s:flows1} and~\ref{s:flows2} are satisfied by all analytic flows with finitely many fixed points.} In fact, instead of analyticity on the whole $\mm$, analyticity in neighborhoods of all saddle fixed points is sufficient.

Let us now describe the time it takes for the trajectories of $(\phi_t)$ to pass through a small neighborhood of the origin, which, as we will see, can be given by an asymptotic series of the following type:  
\begin{definition} \label{d:lp}
For a positive integer $n$ and an integer $k_0$, we will call a \emph{\lp series} a series of the form
\begin{equation} \label{e:lp}
  \sum \limits_{k=k_0}^\infty \left(a_k+b_k (\ln h)\right) h^{k/n},
  \qquad a_k, b_k \in \mathbb R.
\end{equation}    
\end{definition}
\noindent If all $b_k$ are zeroes, one gets the classical notion of Puiseux series, which is why we call such series "log-Puiseux". One can interpret a \lp series as a sum of two Puiseux series multiplied by $1$ and $\ln h$.
We will consider such series for $h \in (0, h_0)$ where $h_0$ is small enough.

Assume $\rho$ is small enough so that the conclusion of Lemma~\ref{l:separatrices} holds.
A set  $S_\rho \subset B_\rho$ is called a \emph{separatrix sector} if it is a connected component of $B_\rho \cap \{H \ne 0 \}$.

\begin{theorem} \label{t:time}
Suppose $\rho$ is small enough. Then, for any separatrix sector $S_\rho$ there are $h_0 > 0$ and a \lp series $P(h)$ converging on $(0, h_0)$ such that for each $0 < h < h_0$
\begin{enumerate}
    \item The subset of $S_\rho$ given by $|H(x, y)|=h$ is a trajectory of $(\phi_t)$ connecting two points on $\partial B_\rho$.  
    \item The time this trajectory spends in $S_\rho$ is equal to $P(h)$.
\end{enumerate}
\end{theorem}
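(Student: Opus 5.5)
We prove Theorem~\ref{t:time} by resolving the singularity of $H$ at the origin~\cite{Hir} and computing the passage time chart by chart in the resolved picture. After finitely many blow-ups we get a proper analytic map $\pi:\tilde U\to B_\rho$, biholomorphic off the exceptional divisor $E=\pi^{-1}(0)$. Near every point of $\pi^{-1}(\{H=0\})$ there are local analytic coordinates $(u,v)$ in which
\[
H\circ\pi = \pm u^{m}v^{l}
\quad\text{(normal crossings, $m\ge1$, $l\ge0$).}
\]
The same resolution gives Lemma~\ref{l:separatrices}. The strict transforms of the branches of $\{H=0\}$ are smooth curves meeting $E$ transversally, and their images are the analytic separatrices. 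Transversality to $\partial B_\rho$ for small $\rho$ follows from the curve selection lemma applied to the critical points of $|z|^2$ restricted to $\{H=0\}$.

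\textbf{Step 1 (topology).} Fix a sector $S_\rho$, bounded by two consecutive separatrices $\sigma_1,\sigma_2$ and an arc of $\partial B_\rho$. Its closure in the resolved space meets the zero set of $H\circ\pi$ along a chain
\[
\tilde\sigma_1\cup E_{1}\cup\dots\cup E_{r}\cup\tilde\sigma_2,
\]
where the $E_j$ are arcs of exceptional components and consecutive pieces meet at normal-crossing points. A collar neighbourhood of this chain inside the lifted sector is foliated by the level curves $|H\circ\pi|=h$, and for small $h$ each such level is a single arc running parallel to the chain. Pushing it down by $\pi$ gives part 1, once we check that this arc meets $\partial B_\rho$ only at its two ends (again by transversality) and that $H$ has constant sign on $S_\rho$.

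\textbf{Step 2 (time as an integral).} Along a trajectory of $(\phi_t)$ the time element is $\omega/dH$, i.e.\ $dt=dx\wedge dy/dH$ restricted to the level curve. Pulling back, $\pi^*(dx\wedge dy)=J(u,v)\,du\wedge dv$ with $J$ analytic, and near each crossing $J=u^{a}v^{b}\cdot(\text{unit})$. Cover the chain by finitely many charts: crossing charts around the points $E_j\cap E_{j+1}$, $\tilde\sigma_i\cap E$, and regular charts along the interiors of the $E_j$ and $\tilde\sigma_i$.

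In a regular chart $H\circ\pi=u^m$ (times a unit, absorbed by an analytic coordinate change). The level curve is $u=h^{1/m}$, and the time contribution is $\int g(h^{1/m},v)\,dv$ over a fixed $v$-interval, with $g$ analytic. This is a convergent Puiseux series in $h^{1/m}$.

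In a crossing chart $u^mv^l=h$. Parametrize the level curve by $u\in[h^{1/m}c_1,c_2]$, with $v=(h/u^m)^{1/l}$. Then $dt$ becomes, up to a constant,
\[
u^{\alpha}h^{\beta}\,G\bigl(u,(h/u^m)^{1/l}\bigr)\,\frac{du}{u}
\]
for rational $\alpha,\beta$ with denominators dividing $ml$, where $G$ is analytic. Expand $G$ in a double power series and integrate term by term. The monomials $u^{p/q}$ with nonzero exponent integrate to powers of $h$. The resonant terms, whose exponent is exactly $-1$, produce a logarithm:
\[
\int_{h^{1/m}c}^{c'}\frac{du}{u}=-\tfrac1m\ln h+\text{const}.
\]
This is the sole source of the $\ln h$ factors.

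\textbf{Step 3 (summing).} Add the finitely many chart contributions. The chart boundaries are fixed transversal arcs, so the pieces glue, and choosing one common denominator $n$ makes the total a log-Puiseux series $P(h)$. Near $\partial B_\rho$ the regular charts on $\tilde\sigma_i$ contribute analytic series in $h^{1/m}$.

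The main obstacle is the crossing-chart computation of Step 2. There one must justify term-by-term integration of the double series uniformly down to the lower limit $u\sim h^{1/m}$, and show that the result converges as a series in $h^{1/n}$ and $\ln h$ on some $(0,h_0)$. I would handle this by splitting $G$ into its part with $v$-degree bounded in terms of the $u$-degree and a remainder. Convergence would come from Cauchy estimates on a polydisk: each monomial's integral is bounded by a geometric factor $r^{i+j}$. Finally, one regroups the terms so that only finitely many monomials feed each power $h^{k/n}$, and checks that at most one $\ln h$ occurs per power, with no $\ln^2 h$; this holds because only one logarithmic integration appears per chart.
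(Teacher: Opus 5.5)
Your argument is correct in substance, but it follows a different route from the paper.

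\textbf{How the paper proceeds.} For part 1 it does not use the resolution at all. Lemma~\ref{l:sector} argues that a level component in the sector cannot close up, and that $|H|$ is monotone on the boundary arc near the two separatrices, so for small $h$ there is exactly one component. For part 2 the paper never integrates along a trajectory. Instead it shows that the \emph{area} $A(h)$ of the sublevel set $S_\rho(h)$ is a log-Puiseux series (Theorem~\ref{t:area}). It does this by cutting $\pi^{-1}(S_\rho(h))$ into elementary pieces and integrating the analytic density $J_\pi$ over $\{x^ny^m<h\}$ (Lemma~\ref{l:integral-computation}). It then recovers the passage time as $\tau(h)=A'(h)$ via the area-preserving coordinates $(h,t)$ (Lemma~\ref{l:dA-dh}).

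\textbf{What each route buys.} You integrate the Gelfand--Leray form $\omega/dH$ along the level arc, chart by chart. This gives $\tau$ directly, with no differentiation step, and shows transparently that the logarithms come from resonant $du/u$ terms. The paper's detour keeps everything a two-dimensional integral of an analytic function over a domain with monomial constraints. That avoids parametrizing level curves and tracking $h$-dependent endpoints, and it yields the area asymptotics of Theorem~\ref{t:area} as a by-product.

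\textbf{Points to correct.} None of these is fatal.

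\emph{Regrouping.} Your claim that after regrouping ``only finitely many monomials feed each power $h^{k/n}$'' is false. In a crossing chart the monomial $u^iv^j$ of $J$ contributes $\frac1l h^{(j+1)/l-1}u^{i-m(j+1)/l}\,du$. The upper endpoint $u=c_2$ therefore produces a multiple of $h^{(j+1)/l-1}$ for \emph{every} $i$. The lower endpoint $u=(h/c^l)^{1/m}$ produces a multiple of $h^{(i+1)/m-1}$ for every $j$. What legitimizes the regrouping is absolute convergence. Your Cauchy estimates do give this, since non-resonant denominators satisfy $|i+1-m(j+1)/l|\ge 1/l$.

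\emph{Integration limits.} The cutting arcs need not be coordinate lines in every chart. So some limits, such as the $v$-interval in a regular chart, are analytic functions of $h^{1/m}$ or $h^{1/l}$ rather than constants. In a regular chart there is also a prefactor $h^{1/m-1}$. Both still give log-Puiseux series.

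\emph{The chain.} Your ``chain'' has to be read as a path on the sector's side. It may run along both sides of one exceptional arc, go around an entire leaf component, and pass the same crossing point twice. This already happens for the inner sector of $H=y^2-x^3$. So the charts should be indexed by quadrants rather than by crossing points.
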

\noindent This theorem is proved below in Section~\ref{s:resolution}. 

Finally, we state a corollary describing how the roof function $f$ in the special flow representation (Theorem~\ref{thm:AnalyticFlowRepresentation}) behaves near its singularities if the flow is analytic.
\begin{corollary} \label{c:floor-function}
Suppose that the assumptions of Theorem~\ref{thm:AnalyticFlowRepresentation} hold, and, additionally, $H$ is analytic in some neighborhood of each saddle of $(\phi_t)$. 
Let $f(x)$ be the roof function defined there and let $x_*$ be any of its infinite discontinuities. 
Fix a sufficiently small one-sided neighborhood $U$ of $x_*$. Then $f$ restricted to $U$ can be written as 
\[
f(x) = f_{sing}(x) + f_{reg}(x), \qquad x \in U,
\]
where $f_{sing}$ is the sum of a convergent Puiseux series w.r.t $h=|x-x_*|$, and $f_{reg}$ is $C^\infty$ up to $x_*$ (in the sense that $f_{reg}$ admits a $C^\infty$ extension to a two-sided neighborhood of $x_*$)\footnote{Note that $f_{sing}$ and $f_{reg}$ may be different for the left and right-sided neighborhoods of $x_*$.}.
\end{corollary}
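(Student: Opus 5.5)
The idea is to follow the trajectory of a point $x\in U$ along the separatrix path that emanates from $x_*$, and split its return time into finitely many passages near saddles plus finitely many regular pieces. Theorem~\ref{t:time} controls the passages near saddles, and elementary smooth dependence on initial data controls the rest.

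\textbf{Setup.} By Theorem~\ref{thm:AnalyticFlowRepresentation} (and its footnote), the forward trajectory of $x_*$ is a stable separatrix ending at a saddle $s_1$. The limit as $x\to x_*$ from the side of $U$ of the return arcs $\phi_{[0,f(x)]}(x)$ is a separatrix path from $x_*$ to $\lim_{x\to x_*^\pm}Tx$. This path passes through saddles $s_1,\dots,s_N$, possibly with repetitions, along separatrix connections.

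\textbf{Parametrization.} The transversal $\Gamma\cong[a,b]$ is parametrized by a local Hamiltonian $H$, continued along a neighborhood of the separatrix path; this is possible because $\eta=dH$ locally and the neighborhood is a thin strip around an arc. The whole separatrix path lies in the level set $H=H(x_*)$, and each $H(s_i)$ equals $H(x_*)$. Hence for $x\in U$ the trajectory of $x$ stays on the level $H(x)$, and at each saddle the relevant quantity is
\[
|H-H(s_i)|=|x-x_*|=h .
\]

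\textbf{Decomposition of the trajectory.} Around each $s_i$, choose the area-preserving $C^\infty$ coordinates in which the flow is analytic; $H$ and travel times are unchanged by this change of coordinates. Then choose $\rho$ so small that Lemma~\ref{l:separatrices} and Theorem~\ref{t:time} apply, and the balls around distinct saddles are disjoint.

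For $U$ small, each passage of the trajectory of $x$ near $s_i$ lies in one fixed separatrix sector $S^{(i)}_\rho$. This holds by continuity, since the one-sided family of arcs converges in the Hausdorff sense to the separatrix path, so the whole family stays on one side of each separatrix. By Theorem~\ref{t:time}, the time spent in $S^{(i)}_\rho$ equals $P_i(h)$, a log-Puiseux series convergent on $(0,h_0)$.

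The remaining pieces of the arc run
\begin{itemize}
\item from $\Gamma$ to the entrance circle $\partial B_\rho(s_1)$,
\item between the exit circle of $s_i$ and the entrance circle of $s_{i+1}$,
\item from the last exit circle back to $\Gamma$.
\end{itemize}
Each of these is a transit between two curves transversal to $X$, namely $\Gamma$ or a piece of $\partial B_\rho$, which are transversal by Lemma~\ref{l:separatrices}. The transit happens through a region free of fixed points, and at $h=0$ it is a compact segment of a regular orbit. By the implicit function theorem applied to the hitting time of a transversal, each such transit time is a $C^\infty$ function of $x$ up to and beyond $x_*$, and the entrance and exit points depend smoothly on $h$.

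Summing the pieces gives
\[
f(x)=\sum_{i=1}^N P_i(|x-x_*|)+g(x),\qquad g\in C^\infty \text{ up to } x_* .
\]

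\textbf{Splitting into singular and regular parts.} Split the finite sum of log-Puiseux series as follows.
\begin{itemize}
\item The terms $a_kh^{k/n}$ with $k\ge0$ and $k/n\in\Z$ form a convergent power series in $h$, which is analytic up to $h=0$. Put these into $f_{reg}$ together with $g$.
\item All remaining terms, namely negative powers, non-integer powers, and every term carrying $\ln h$, form a convergent series. Call this $f_{sing}$.
\end{itemize}
This gives the required decomposition, with $f_{sing}$ a convergent (log-)Puiseux series in $h$. The decomposition is taken separately on the left and right sides, since the sectors and the separatrix paths may differ on the two sides.

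\textbf{Main obstacle.} The delicate step is the bookkeeping at the interfaces. One must verify that the trajectory really enters and leaves each $B_\rho(s_i)$ exactly once per passage and only through the sector $S^{(i)}_\rho$. One must also verify that the outer transit times are smooth \emph{at} $h=0$, which uses transversality of the separatrices to $\partial B_\rho$ given by Lemma~\ref{l:separatrices}. I would handle this by choosing $\rho$ first and then shrinking $U$, using the Hausdorff convergence of the return arcs from Lemma~\ref{l:good-intervals}.
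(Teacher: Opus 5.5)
Your proposal is correct and follows essentially the same route as the paper: you decompose the first-return arc along the limiting separatrix path into regular transits between transversals, which are $C^\infty$ in $x$ across $x_*$, and passages through separatrix sectors, which are log-Puiseux in $h=|x-x_*|$ by Theorem~\ref{t:time}, and then sum the pieces. You are more explicit than the paper in identifying the sector level with $h=|x-x_*|$ via the continued Hamiltonian, and you read ``Puiseux'' in the statement as ``log-Puiseux'', which matches how the paper itself uses the corollary in Lemma~\ref{l:return-time-properties}; moving the integer-power terms into $f_{reg}$ is harmless but not needed.
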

\noindent This corollary is proved below in Section~\ref{s:floor-function}. It implies (as we show below in Lemma~\ref{l:return-time-properties}) that $f$ is of at least logarithmic growth~\eqref{e:log-growth-intro}, and thus Theorem~A follows from Theorem~A$^\prime$ and Theorem~\ref{thm:AnalyticFlowRepresentation}.  

\subsection{Resolution of singularities and its corollaries} \label{s:resolution}
In this subsection, we use the following classical (and highly nontrivial) resolution of singularities result to prove Theorem~\ref{t:time}.  
\begin{theorem}[Hironaka~\cite{Hir}, reproduced from~\cite{Ati}] \label{t:resolution}
Let $H(x, y)$ be a real analytic function that is defined in a neighborhood of $(0, 0)$, has a critical point $H(0, 0) = 0$, and is not identically zero. Then there exists an open set $U \ni (0, 0)$, a real analytic manifold $\tilde U$, and a proper analytic map $\pi: \tilde U \mapsto U$ such that 
\begin{enumerate}
    \item $\pi: \tilde U \setminus \tilde A \mapsto U \setminus A$ is an isomorphism, where $A = H^{-1}(0)$ and $\tilde A = \pi^{-1}(A)$,

    \item For each point $\tilde p \in \tilde U$ there are local coordinates $(x_1, y_1)$ centered at $\tilde p$ so that locally near $\tilde p$ we have
    \begin{equation} \label{e:x1-y1}
    H \circ \pi = c(x_1, y_1) x_1^n y_1^m,        
    \end{equation}
    where $c(x_1, y_1)$ is an analytic function with $c(x, y) \ne 0$ on its domain, and $n, m \ge 0$ are integers.
\end{enumerate}    
\end{theorem}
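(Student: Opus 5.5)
Since the statement is the two-dimensional case of Hironaka's theorem, I would not invoke the general machinery. Instead I would prove it by the classical method of resolving plane curve singularities through iterated point blow-ups (quadratic transformations), carried out in the real-analytic category. I would take $\pi$ to be the composition $\pi=\pi_1\circ\cdots\circ\pi_K$ of finitely many blow-ups of points, each centered at a point of the current total transform of $A=H^{-1}(0)$. The manifold $\tilde U$ is then a real-analytic surface (an iterated real blow-up, so a non-orientable surface glued from charts). Properness and property 1 are automatic: a blow-up is proper and is an isomorphism off its center, and every center lies over $A$, so the composition is an isomorphism over $U\setminus A$.

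First I would factor $H=u\cdot\prod_j g_j^{e_j}$ in the ring $\mathbb R\{x,y\}$ of convergent power series, which is a UFD by Weierstrass preparation. Here $u$ is a unit and the $g_j$ are irreducible. Property 2 then asks that the total transform $\{H\circ\pi=0\}$ be, near every point of $\tilde U$, a normal crossing divisor: either empty, a single smooth branch $\{x_1=0\}$ (possibly carrying multiplicity), or two transversal smooth branches $\{x_1y_1=0\}$. Once that holds, the local form \eqref{e:x1-y1} follows by applying the Weierstrass division theorem to each smooth component, choosing coordinates in which those components are the coordinate axes, and absorbing everything else into the nonvanishing factor $c$.

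The core step is to show that finitely many blow-ups achieve normal crossings. I would use the standard invariants. In the chart $x=x_1$, $y=x_1y_1$ one has $H\circ\pi_1=x_1^{m}\tilde H(x_1,y_1)$, where $m$ is the order of $H$ at the origin and $\tilde H$ is the strict transform. For a single irreducible branch, the multiplicity of the strict transform at its points on the exceptional divisor is at most the previous multiplicity, and it strictly drops after finitely many steps. This can be read off from the Puiseux (or Newton polygon) data: each blow-up lowers the Puiseux characteristic in a controlled way, so eventually every branch becomes smooth. Distinct branches are separated after finitely many blow-ups, because their intersection multiplicity drops by at least the product of their multiplicities at each step. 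Finally, a few more blow-ups at points where a strict transform is tangent to the exceptional divisor, or where three components meet, make everything transversal with at most two components through any point.

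I expect the main obstacle to be the real-analytic setting. Real irreducible factors can have no real points beyond the origin, or can split only after complexification. I would handle this by complexifying: $H$ extends to a holomorphic function near $0\in\mathbb C^2$, and the blow-up sequence for the complex germ can be chosen invariant under complex conjugation, since conjugate infinitely-near points are blown up together. Only real centers contribute to the real manifold, and the real points of the resolved complex surface form $\tilde U$. A normal-crossing form that holds over $\mathbb C$ at a real point can be realized with real coordinates by taking real parts of conjugation-invariant local equations. The one subtle case is a pair of conjugate branches through a real point: these appear as $x_1^2+y_1^2$ times a unit, which is not of the form \eqref{e:x1-y1}. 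Such points must be blown up once more, after which the two branches no longer pass through a common real point. Because the process terminates, this gives the statement with $U$ a small neighborhood of the origin.
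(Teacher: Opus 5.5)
The paper gives no proof of this statement. It quotes the two-variable case of Atiyah's local formulation of Hironaka's theorem and uses it as a black box. Your proposal is correct and takes a genuinely different route: you prove the planar case directly, by the classical embedded resolution of plane curve singularities through point blow-ups.

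Your construction has three parts. First, you run the resolution conjugation-equivariantly on the complexification: at each stage you blow up the finite, conjugation-invariant set of non-normal-crossing points. Second, you pass to real parts; blow-ups at non-real centers do not touch the real surface, and the real part is a closed subset, so properness survives. Third, you add one blow-up at each real point where two conjugate smooth branches cross. There $H\circ\pi$ is a unit times $(x_1^2+y_1^2)^k$, and after blowing up it becomes $x_1^{2k}(1+y_1^2)^k$ times a unit.

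The citation buys brevity and generality: it works in any dimension and needs no discussion of the real structure. Its cost is invoking a very deep theorem where a classical, elementary argument suffices. Your route is self-contained and yields more than the statement asks for. The map $\pi$ is a finite composition of point blow-ups, and $\tilde U$ is a (non-orientable) surface. The set $\tilde A$ is a finite union of smooth curves crossing transversally, which is exactly the picture of $\tilde A_r$ that the paper relies on in the proofs of Lemma~\ref{l:separatrices} and Theorem~\ref{t:area}.

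Make two points explicit when you write it up.
\begin{enumerate}
\item Read ``normal crossings'' on the complexified divisor, or equivalently on the factorization in $\mathbb R\{x_1,y_1\}$, not on the real zero set $\{H\circ\pi=0\}$ as your second paragraph literally says. For example, $x_1(x_1^2+y_1^2)$ has real zero set $\{x_1=0\}$ but is not of the form \eqref{e:x1-y1}. Your complexification paragraph effectively fixes this, so state the criterion that way from the start.
\item Shrink $U$ so that the origin is the only singular point of the reduced complex curve $\{\prod_j g_j=0\}$. Then at every other real point of $A$ exactly one $g_j$ vanishes and has nonzero differential. Hence $H=\text{unit}\cdot g_j^{e_j}$ there, no blow-up is needed, and all centers lie over the origin.
\end{enumerate}
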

Let us start by introducing some common notation that will be used in the proofs below.
Let $B_r$ denote the closed ball with radius $r$ centered at the origin, and suppose $B_r \subset U$. Set $A_r = A \cap B_r$ and $\tilde A_r = \pi^{-1}(A_r)$. 
As $\pi$ is proper, $\tilde A_r$ is compact. 
Cover each point $p \in \tilde A_r$ by an open neighborhood $W \ni p$ such that there is a coordinate chart $(x_1, y_1)$ from Theorem~\ref{t:resolution} centered at $p$ and defined on $W$.
Then select a finite subcover $\{W_i\}$.
Using these local coordinates $(x_1, y_1)$, we see that $\tilde A_r$ is a union of finitely many pieces of analytic curves (locally given by $x_1 = 0$ or $y_1=0$) and points ($x_1 = y_1 = 0$) where those curves intersect. 

First, let us derive Lemma~\ref{l:separatrices} from Theorem~\ref{t:resolution}.   
\begin{proof}[Proof of Lemma~\ref{l:separatrices}]
 Set $D(x, y) = x^2 + y^2$ to be the squared distance to the origin and put $\tilde D(x_1, y_1) = D \circ \pi$. 
 Consider a curve piece (open segment) $\alpha \subset \tilde A_r$ covered by a single coordinate chart and given by either $x_1 = 0$ or $y_1 = 0$. Assume WLOG that $\alpha$ is given by $y_1 = 0$, then $x_1$ parametrizes $\alpha$, and we can identify $\alpha$ with an interval. Then $\tilde D(x_1)$ is an analytic non-negative function on this interval.
 One possibility is that $\tilde D$ is identically zero, then 
 $\pi(\alpha)$ is the origin. If this is not the case, analyticity implies that $\tilde D$ has only finitely many zeroes; suppose they occur at $x_1=a_1, \dots , a_k$.
 As $\tilde D \ge 0$, analyticity implies that for each zero there is $\delta_i > 0$ such that $\frac{d\tilde D}{dx_1} > 0$ on $(a_i, a_i + \delta_i]$ and $\frac{d\tilde D}{dx_1} < 0$ on $[a_i-\delta_i, a_i)$. 
 Put $\alpha_\rho = \alpha \cap \pi^{-1}(B_\rho)$; it is the subset of $\alpha$ given by $\tilde D \le \rho^2$.
 Decreasing $\rho$ is needed, we have 
 $\alpha_\rho \subset \left( \cup_i [a_i - \delta_i, a_i + \delta_i]\right)$. 
 Then, for each $i$ the set $\alpha_\rho \cap [a_i - \delta_i, a_i + \delta_i]$ is an interval $[c_i, d_i] \ni a_i$, and $\tilde D(c_i) = \tilde D(d_i) = \rho^2$. Hence, $\pi([c_i, a_i])$ is an analytic curve connecting the origin with $\partial B_\rho$. The signs of $\frac{d\tilde D'}{dx_1}$ established above show that its intersection with $\partial B_\rho$ is transversal.
 The same also holds for $\pi([a_i, d_i])$.
 Finally, we note that different curves $H=0$ can only intersect at a critical point of $H$, and, when $\rho$ is small enough, the origin is the only such point in $B_\rho$. 
\end{proof}

Let $S_\rho \subset B_\rho$ be a separatrix sector. Denote by 
\begin{equation} \label{e:sublevel}
S_\rho(h) = \{(x, y) \in S_\rho : |H(x, y)| < h\}     
\end{equation}
the sublevel set of $H$ in the separatrix sector.
\begin{lemma} \label{l:sector}
    Suppose $\rho$ is such that the origin is the only critical point in $B_\rho$ and the conclusion of Lemma~\ref{l:separatrices} holds in $B_\rho$. Then for any small enough $h$ the set $S_\rho(h)$ is connected and bounded by the following five curves:
    \begin{itemize}
        \item a trajectory of $(\phi_t)$ connecting two points on $\partial B_\rho$ such that $|H|=h$ on it, this trajectory is transversal to $\partial B_\rho$;
        \item two separatrices bounding the sector $S_\rho$;
        \item two arcs of the boundary circle $\partial B_\rho$ connecting the four points where the curves above intersect the boundary circle.  
    \end{itemize}  
\end{lemma}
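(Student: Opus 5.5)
Within the separatrix sector $S_\rho$, $H$ has constant sign, say $H>0$ (replace $H$ by $-H$ otherwise). The sector is bounded by two separatrices $\sigma_1,\sigma_2$ (from Lemma~\ref{l:separatrices}) and an arc $C$ of $\partial B_\rho$. The plan is to use the function $|H|=H$ together with transversality information on $\partial B_\rho$ to describe the level sets $\{H=h\}\cap S_\rho$ for small $h$.

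\emph{First step: transversality on the boundary circle.} $H$ restricted to the compact arc $C$ is positive in its interior and vanishes only at its two endpoints $p_1=\sigma_1\cap\partial B_\rho$ and $p_2=\sigma_2\cap\partial B_\rho$. By Lemma~\ref{l:separatrices}, the separatrices meet $\partial B_\rho$ transversally, so $\nabla H(p_i)\neq 0$ is not normal to the circle. Hence the derivative of $H|_C$ along the arc is nonzero near $p_1,p_2$. Choosing $\delta$-neighbourhoods $C_1,C_2$ of $p_1,p_2$ in $C$, $H|_C$ is strictly monotone on each $C_i$ and bounded below by some $h_1>0$ on $C\setminus(C_1\cup C_2)$. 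So for $0<h<h_1$ (shrinking $h_1$ if needed), the equation $H=h$ has exactly two solutions on $C$, namely $q_1(h)\in C_1$ and $q_2(h)\in C_2$. Since $H|_C$ has nonzero derivative along $C$ there, the level curve through $q_i(h)$ crosses $\partial B_\rho$ transversally.

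\emph{Second step: the level set inside $S_\rho$ is a single arc.} Since the origin is the only critical point in $B_\rho$, $\{H=h\}\cap S_\rho$ is a smooth $1$-manifold, closed in $S_\rho$. No component of it can accumulate on the origin, because $H\to 0$ there. So each component is either a closed curve in the interior of $S_\rho$ or an arc with endpoints on $C$.
\begin{itemize}
\item A closed component would bound a disc $D\subset S_\rho$. Since $S_\rho$ is a simply connected topological disc, $H$ attains an interior extremum in $D$, which would be a critical point other than the origin. This is impossible.
\item The endpoints of arc components are among $q_1(h),q_2(h)$, so there is exactly one arc $\gamma_h$. It joins $q_1(h)$ and $q_2(h)$: it cannot return to the same point, since each $q_i(h)$ is a single transversal crossing.
\end{itemize}
Because $\gamma_h$ contains no fixed points, it is one trajectory of $(\phi_t)$.

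\emph{Third step: connectedness and the five boundary curves.} The arc $\gamma_h$ cuts the disc $S_\rho$ into two components. One of them contains the origin in its closure and is adjacent to $\sigma_1$ and $\sigma_2$; on it $H<h$, by continuity and because $H\neq h$ off $\gamma_h$. On the other, $H>h$. Therefore $S_\rho(h)$ is exactly the first component. It is connected and bounded by $\gamma_h$, by $\sigma_1$ and $\sigma_2$, and by the two subarcs of $\partial B_\rho$ from $p_i$ to $q_i(h)$.

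\emph{Main obstacle.} The expected hardest point is justifying that $S_\rho$ is topologically a disc bounded by exactly two separatrices and one boundary arc. In Lemma~\ref{l:separatrices} the separatrices are disjoint except at the origin and leave $\partial B_\rho$ transversally, so consecutive ones cut $B_\rho$ into curvilinear sectors. This should follow from the Jordan curve theorem.
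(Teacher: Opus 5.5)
Your proposal is correct and follows essentially the same route as the paper. In both, monotonicity of $|H|$ on the boundary arc near the separatrix endpoints (coming from transversality) gives exactly two level-$h$ points on the arc, and closed level components are excluded because they would enclose an extremum. Hence the level set in the sector is a single trajectory that splits $S_\rho$ into $\{|H|<h\}$ and $\{|H|>h\}$. Your derivation of the transversality of $\gamma_h$ from the nonvanishing tangential derivative of $H|_C$ at $q_i(h)$ is a bit more explicit than the paper's. The disc topology of $S_\rho$ that you flag as the main obstacle is also used by the paper, implicitly and without proof.
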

\begin{proof}
    Let $C_h$ be the subset of $S_\rho$ where $|H(x, y)|=h$. Consider a connected component $C_{h, i}$ of this set. As the origin is the only critical point of $H$ in $B_\rho$, this connected component is a simple curve, which is a trajectory of $(\phi_t)$. It cannot be a closed curve, as then there would be a local extremum of $H$ inside it. Let $\Sigma_\rho$ be the arc of the boundary circle $\partial B_\rho$ that bounds the sector $S_\rho$.
    We see that $C_{h, i}$ intersects $\Sigma_\rho$ at two points. The value of $|H|$ is monotone on $\Sigma_\rho$ near the boundary points of $\Sigma_\rho$, as separatrices are transversal to the boundary circle. Take small open neighborhoods of the boundary points where we have the monotonicity, and let $h_0>0$ be the minimum of $|H|$ on $\Sigma_\rho$ outside these neighborhoods. Then any $h<h_0$ gives exactly two points of $\Sigma_\rho$ where this value of $|H|$ is achieved, so these are the points where $C_{h, i}$ intersects $\Sigma_\rho$. Hence, $C_h$ is a single level curve intersecting $\Sigma_\rho$ at two points. 
    
    By Lemma~\ref{l:separatrices}, we know that separatrices are transversal to $\partial B_\rho$. As $C_h$ intersects the boundary circle near its intersections with separatrices, $C_h$ is transversal to $\partial B_\rho$ as well. It divides $S_\rho$ into the parts where $|H|<h$, which is $S_\rho(h)$, and  where $|H|>h$.
    This gives the description of the boundary of $S_\rho(h)$ claimed in the lemma.
\end{proof}

We now state a key theorem that will imply Theorem~\ref{t:time}. This theorem, as well as its analogues in arbitrary dimension and further generalizations, are known: see~\cite{Jean, AGV, Loes, Green} and references therein. However, the proof is simple modulo Theorem~\ref{t:resolution}, so we give a complete proof for the reader's convenience.  
\begin{theorem} \label{t:area}
Let $H(x, y)$ be a real-analytic function such that $H(0, 0) = 0$ is a critical point, which is not a local extremum. Fix a small enough $\rho>0$ and a separatrix sector $S_\rho$. 
Then there exist $h_0>0$ and a \lp series $P(h)$, which converges on $(0, h_0)$, such that for any $h \in (0, h_0)$ the area of the sublevel set $S_\rho(h)$ defined by~\eqref{e:sublevel} equals $P(h)$.
\end{theorem}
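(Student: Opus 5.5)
We pull the area integral back to the resolution space of Theorem~\ref{t:resolution} and compute it chart by chart. Write the area of $S_\rho(h)$ as $\int_{S_\rho}\mathbbm{1}_{\{|H|<h\}}\,dx\,dy$. Since $\pi$ is an isomorphism off $\tilde A$, and $\tilde A$ and $A$ have measure zero, this equals
\[
\int_{\pi^{-1}(S_\rho)}\mathbbm{1}_{\{|H\circ\pi|<h\}}\,J(x_1,y_1)\,dx_1\,dy_1 ,
\]
where $J$ is the analytic Jacobian of $\pi$ in the local charts. As $h\to0$, the set $\{|H\circ\pi|<h\}\cap\pi^{-1}(B_\rho)$ shrinks into an arbitrarily small neighborhood of the compact set $\tilde A_\rho$. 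Hence, up to an error that vanishes for $h<h_0$, only a neighborhood of $\tilde A_\rho$ contributes. We cover it by the finitely many charts $W_i$ and split the integral with an analytic (or at least piecewise analytic) partition adapted to the charts.

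The main difficulty is the boundary $\partial B_\rho$, i.e.\ the cutoff $\tilde D\le\rho^2$, together with the sector restriction. Following the proof of Lemma~\ref{l:separatrices}, we first choose the charts so that each chart meeting $\pi^{-1}(\partial B_\rho)$ contains only a smooth branch $\{y_1=0\}$ of $\tilde A$, with $H\circ\pi=c\,y_1^m$ there and $\pi^{-1}(\partial B_\rho)$ transversal to it. Near such points we straighten coordinates so that the boundary becomes $x_1=\text{const}$. The remaining charts, around the crossing points $x_1=y_1=0$, lie strictly inside $\pi^{-1}(\intt B_\rho)$. The sector condition only selects the signs of $x_1,y_1$ (quadrants) in each chart, since $S_\rho$ is a component of $\{H\ne0\}$ and its preimage is a union of quadrant pieces. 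Away from $\tilde A_\rho$, the function $|H\circ\pi|$ is bounded below, so for small $h$ those regions do not contribute. Arranging all of this compatibly is the step I expect to be the main technical obstacle. In the worst case, the cutoff pieces can instead be handled by the smooth-branch computation below, with an analytic dependence on the endpoint.

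It then remains to compute model integrals. First consider a chart at a crossing, with $H\circ\pi=c\,x_1^ny_1^m$. Absorbing $|c|^{1/(n+m)}$ into one coordinate by an analytic change (possible since $c\ne0$), we get
\[
I(h)=\int_{[0,\delta]^2}\mathbbm 1_{\{x^ny^m<h\}}\,g(x,y)\,dx\,dy
\]
with $g$ analytic. Expanding $g=\sum g_{jk}x^jy^k$, each monomial integral can be computed explicitly. If $n,m>0$, substituting $u=x^ny^m$ gives terms $h^{(j+1)/n}$, $h^{(k+1)/m}$, and, when $(j+1)/n=(k+1)/m$, terms $h^{(j+1)/n}\ln h$, plus constants. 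Terms of the last type are exactly log-Puiseux terms with denominator $\operatorname{lcm}(n,m)$. The full-box term $\delta^{j+k+2}$ contributes only to the constant $a_0$. Summing over $j,k$ converges for $h<h_0$ because the Taylor series of $g$ converges on a slightly larger box, and the explicit coefficients are dominated geometrically.

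In a smooth-branch chart, with $H\circ\pi=c\,y^m$ and straightened coordinates, the integral reduces to $\int\int_0^{h^{1/m}}g\,dy\,dx$, which is an analytic function of $h^{1/m}$. Adding the finitely many contributions, which all have the same sign conventions up to the constant total area of the complement, gives a single log-Puiseux series $P(h)$ with common denominator $n$ and $k_0\ge0$. Since all $b_k$ with $k=0$ can be absorbed, convergence on $(0,h_0)$ follows from the convergence of each piece.
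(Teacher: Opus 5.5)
Your proposal is correct and follows essentially the paper's route: pull the area back by the resolution map, discard everything outside a small neighbourhood of $\tilde A_\rho$, cut that neighbourhood by analytic transversals into boxes at normal crossings and strips along smooth branches, normalize $c$, and integrate Taylor monomials. This is exactly what the paper does with its elementary pieces and the three-part split in Lemma~\ref{l:integral-computation}; you also make explicit that $\pi^{-1}(\partial B_\rho)$ can serve as one of the cutting transversals near the strict transform, which the paper leaves implicit. The slips are cosmetic: the splitting must be a decomposition of the domain by analytic curves (a smooth partition of unity would destroy the convergent expansion), absorbing $|c|$ into one coordinate needs the exponent $1/n$ rather than $1/(n+m)$, and the ``constants'' in the box computation cancel because these integrals tend to $0$.
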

\begin{proof}
As noted after Theorem~\ref{t:resolution}, the set $\tilde A_r$ is compact, and is a union of finitely many pieces (closed segments) of analytic curves with different pieces intersecting at a finite set of points.
Cut each of those pieces into finitely many subpieces such that each subpiece is covered by some local coordinates $(x_1, y_1)$ from Theorem~\ref{t:resolution}. At each of the cutting points, select an analytic transversal to $\tilde A_r$. Then, those transversals cut any small enough neighborhood of $\tilde A_r$ into pieces, each of which is covered by  some local coordinates $(x_1, y_1)$. Call these transversals \emph{cutting transversals}.

Let $\tilde S_\rho = \pi^{-1}(S_\rho)$ to be the lift of the separatrix sector $S_\rho$. Put $\tilde S_\rho(h) = \pi^{-1}(S_\rho(h))$, then $\tilde S_\rho(h) \subset \tilde S_\rho$ is given by the inequality $|H \circ \pi| < h$. 
Let $G(h)$ denote the area of $S_\rho(h)$.
Denoting by $J_\pi$ the Jacobian of $\pi$, we get
\begin{equation} \label{e:int-resolved}
G(h) = \int_{\tilde S_\rho(h)} \pi^*(dx \wedge dy),    
\end{equation}
where in the local coordinates $\pi^*(dx \wedge dy) = J_\pi dx_1 \wedge dy_1$. 
Using the local coordinates, we see that, taking $h$ to be small enough, one can fit $\tilde S_r(h)$ inside any neighborhood of $\tilde A_r$. Pick $\delta > 0$ such that $\tilde S_r(\delta)$ is divided by the cutting transversals into pieces, each of which is covered by a single local coordinate chart $(x_1, y_1)$ centered at a point of $\tilde A_r$. Additionally, cut each of those pieces by the coordinate axes $x_1=0$ and $y_1=0$ to obtain a decomposition of $\tilde S_r(\delta)$ into \emph{elementary pieces}, which we denote by $V_i$. For $h < \delta$, put $\tilde S_{r, i}(h) = V_i \cap \tilde S_r(h)$. Then the integral giving $G(h)$ is decomposed into the sum of the integrals over $\tilde S_{r, i}(h)$.
Each of those writes in the local coordinates as 
\[
G_i(h) = \int_{\tilde S_{r, i}(h)} J_\pi(x_1, y_1) dx_1 dy_1.
\]
The domain of integration $\tilde S_{r, i}(h)$ is the subset of $V_i$ given by $|H \circ \pi| < h$, which is written in the local coordinates as 
\begin{equation} \label{e:ineq-h}
|c_1(x_1, y_1)x_1^n y_1^m| < h.    
\end{equation}
Interchanging $x_1$ and $y_1$ if necessary, we can assume $n>0$. Perform the invertible analytic coordinate change $\psi: (x_1, y_1) \mapsto (x_2, y_2)$, where $x_2 = |c_1(x_1, y_1)|^{1/n} x_1$ and $y_2=y_1$. The inequality~\eqref{e:ineq-h} becomes simply $|x_2^n y_2^m| < h$, so we have
\[
G_i(h) = \int_{\psi(V_i) \cap \{|x_2^n y_2^m| < h\}} \frac{J_\pi \circ \psi^{-1}}{J_\psi} \; dx_2 dy_2,
\]
where $J_\psi \ne 0$ is the Jacobian of $\psi$. The integrand is an analytic function.
The boundary of $\psi(V_i)$ is given by coordinate axes, analytic transversals to them (at most one transversal to each coordinate axis), and the curve $|x_2^n y_2^m| = \delta$ (which is irrelevant for the domain of integration as we only consider $h<\delta$).
We now state a lemma that will imply that $G_i(h)$ is given by a \lp series.
\begin{lemma} \label{l:integral-computation}
    Let $f(x, y)$ be an analytic function and $X(y)$ and $Y(x)$ be analytic functions that are positive in a neighborhood of zero. Let $n, m \ge 0$ be integers with $n > 0$. Let $V$ be the domain given by $0 < x < X(y)$ and $0 < y < Y(x)$. Then for a small enough $h_0 > 0$ the integral
    \[
    I(h) = \int_{V \cap \{x^ny^m < h\}} f(x, y) \; dx dy
    \]
    is given for $h \in (0, h_0)$ by a convergent \lp series.
\end{lemma}
\noindent This lemma is proven in Appendix~\ref{s:integrals-puiseux} by decomposing the integrand into a Taylor series and directly computing the integrals of the monomials. 
It implies that each $G_i(h)$ is a \lp series, and so is their sum $G(h)$. This completes the proof of Theorem~\ref{t:area}.
\end{proof}

\begin{lemma} \label{l:dA-dh}
Fix a small enough $\rho>0$. Let $A(h)$ be the area of $S_\rho(h)$, and $\tau(h)$ be the time the trajectory of $(\phi_t)$ with $|H(x, y)|=h$ spends in $S_\rho$.
Then for all small enough $h>0$ we have 
\[
\tau(h) = \frac{d}{dh} A(h).
\]
\end{lemma}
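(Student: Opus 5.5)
The natural approach is the coarea formula, in the form of the flux-box picture: compute $A(h+\Delta h)-A(h)$ directly. Write $\mathcal L_h=\{|H|=h\}\cap S_\rho$, which is a single trajectory crossing $S_\rho$ by Lemma~\ref{l:sector}. For $0<h<h+\Delta h<h_0$, the difference is the area of the strip
\[
\Sigma=\{(x,y)\in S_\rho : h\le |H(x,y)|<h+\Delta h\}.
\]
This strip is bounded by $\mathcal L_h$, $\mathcal L_{h+\Delta h}$ and two short arcs of $\partial B_\rho$, again by Lemma~\ref{l:sector}. Since $H$ has constant sign on $S_\rho$, we may assume $H>0$ there; otherwise replace $H$ by $-H$, which reverses time but does not change passage times.

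On $\Sigma$ I use coordinates $(t,s)$. Fix the entry point $p_h\in\partial B_\rho$ of $\mathcal L_h$ and let $p_{h'}$, for $h'\in[h,h+\Delta h]$, be the point of $\partial B_\rho$ near $p_h$ where $H=h'$. Such a point is unique and depends smoothly on $h'$, because $\mathcal L_h$ is transversal to $\partial B_\rho$; this transversality also gives $\partial_\theta H\neq0$ near $p_h$ along the circle. Consider the map
\[
\Phi(h',t)=\phi_t(p_{h'}),\qquad 0\le t\le \tau(h').
\]
It is a diffeomorphism onto $\Sigma$, since $\Sigma$ contains no critical points and every trajectory entering through the arc crosses $S_\rho$. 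Its Jacobian with respect to $dx\wedge dy$ is computed from $\Phi^*(dx\wedge dy)$. Hamilton's equations~\eqref{e:hamilton-equations} give $\omega(X,\cdot)=dH$, and $dH(\partial_{h'}\Phi)=1$ because $H\circ\Phi=h'$. Hence $\Phi^*(dx\wedge dy)=\pm\, dh'\wedge dt$, so the area element in $(h',t)$ coordinates is exactly $dh'\,dt$. Therefore
\[
A(h+\Delta h)-A(h)=\int_h^{h+\Delta h}\tau(h')\,dh'.
\]

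The step I expect to need the most care is checking that $\Phi$ really parametrizes $\Sigma$ bijectively. The family must not leave $B_\rho$ early or re-enter it. This follows from Lemma~\ref{l:sector} applied to each $h'<h_0$: each level set in the sector is a single arc with endpoints on the two boundary arcs, and $\Sigma$ is exactly the union of these arcs. I also need $\tau$ to be continuous in $h'$. This holds because the exit point depends smoothly on $h'$ by the same transversality, and the implicit function theorem applied to $\phi_t(p_{h'})\in\partial B_\rho$ gives smoothness of $\tau$.

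With continuity of $\tau$ in hand, dividing by $\Delta h$ and letting $\Delta h\to0$ (from either side) gives $A'(h)=\tau(h)$ for all small $h>0$. Combined with Theorem~\ref{t:area}, this yields Theorem~\ref{t:time}, since the termwise derivative of a convergent log-Puiseux series is again a convergent log-Puiseux series.
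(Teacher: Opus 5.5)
Your argument is correct and is essentially the paper's proof: both pass to energy--time coordinates $(|H|,t)$ on the sector, note that they are area-preserving, and then apply the fundamental theorem of calculus to $\int\tau(h')\,dh'$. The differences are refinements of the same argument: you verify the area-preservation explicitly from $\omega(X,\cdot)=dH$ and $H\circ\Phi=h'$ (the paper only asserts it), you justify continuity of $\tau$, and you work on the strip $\{h\le|H|<h+\Delta h\}$, which avoids the improper integral $A(h)=\int_0^h\tau$ near the separatrices that the paper writes down directly.
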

\begin{proof}
  Consider new coordinates on $S_\rho$: the value $\tilde h$ of $|H(x, y)|$ and the time $\tilde t$ it takes for a trajectory starting from the given point to reach the boundary circle $\partial B_\rho$.
  It is well-known and can be checked by a simple computation that the change of variables $(x, y) \mapsto (\tilde h, \tilde t)$ is area-preserving.
  So, $A(h)$ can be computed in the new coordinates, giving $A(h)=\int_0^h \tau(\tilde h)d \tilde h$. By the fundamental theorem of calculus, $\frac{dA}{dh} = \tau(h)$.
\end{proof}

\noindent We are now ready to prove Theorem~\ref{t:time}. 
\begin{proof}[Proof of Theorem~\ref{t:time}]
The first claim follows from Lemma~\ref{l:sector}.
To prove the second claim, we note that by Theorem~\ref{t:area} the area $A(h)$ is given by a convergent \lp series. Taking the derivative yields the \lp series for the time $\tau(h)$ by Lemma~\ref{l:dA-dh}.
\end{proof}

\subsection{Roof function near its discontinuities} \label{s:floor-function}
In this subsection we use Theorem~\ref{t:time} to derive Corollary~\ref{c:floor-function}. We then state and prove Lemma~\ref{l:return-time-properties}, which is needed to prove Theorem~A.
\begin{proof}[Proof of Corollary~\ref{c:floor-function}]
Let $\Gamma$ be the transversal chosen in the proof of Theorem~\ref{thm:AnalyticFlowRepresentation}.
Recall that the roof function $f(x)$ is the first return time to $\Gamma$, and the coordinate $x$ on it is  the value of a local Hamiltonian.

Take any infinite discontinuity $x_* \in \Gamma$ of $f(x)$ and select one of its one-sided neighborhood.
By Lemma~\ref{l:good-intervals} the corresponding one-sided limit when $x \to x_*$ of $\Gamma$-arcs starting at $x$ is a separatrix path (see the definition in Section~\ref{s:orthogonal}) starting at $x_*$ and ending at some point $x_*' \in \Gamma$.
Let $s_1, \dots, s_N$ be its saddles and let $\gamma_1, \dots, \gamma_{N-1}$ be the separatrix connections forming this path.
Let us also denote by $\gamma_0$ the separatrix piece connecting $x_*$ with $s_1$ and by $\gamma_{N}$ the separatrix piece connecting $s_N$ with $x_*'$.
Then $\gamma_{i-1}$ is a stable separatrix of $s_i$ and $\gamma_i$ is an unstable separatrix of $s_i$, where $i=1, \dots, N$.

For any saddle $s_i$, $i=1, \dots, N$, in this path, pick a small $\rho_i > 0$ such that the balls $B_{\rho_i}(s_i)$ are disjoint, in each of these balls there is an analytic local Hamiltonian $H_i$, we have the conclusion of Lemma~\ref{l:separatrices} in all these balls, and we have the conclusion of Theorem~\ref{t:time} for each separatrix sectors in each of these balls $B_{\rho_i}(s_i)$.
Denote by $\Lambda_i^s$ and $\Lambda_i^u$ small arcs of $\partial B_{\rho_i}(s_i)$ around its intersections with $\gamma_{i-1}$ and $\gamma_i$, respectively. 
Also, take $\Lambda_0^u,\Lambda_N^s = \Gamma$. 
Then, in a small enough one-sided neighborhood of $x_*$ the first return map to $\Gamma$ decomposes into the composition of the Poincar\'e maps from $\Lambda_i^u$ to $\Lambda_{i+1}^s$ and from $\Lambda_i^s$ to $\Lambda_i^u$. For the maps of the first type, the Poincar\'e map is defined at the point where the separatrix path intersects the preimage transversal, so the corresponding passing times are $C^\infty$. For the maps of the second kind, the passing time is a \lp series by Theorem~\ref{t:time}. Adding those times together gives Corollary~\ref{c:floor-function}.   
\end{proof}

\begin{lemma} \label{l:return-time-properties}
If $H$ is analytic near each saddle of $(\phi_t)$, the function $f(x)$ from Theorem~\ref{thm:AnalyticFlowRepresentation}  is of at least logarithmic growth, i.e., satisfies condition~\eqref{e:log-growth-intro}.
\end{lemma}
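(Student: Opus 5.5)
The plan is to split $(a,b)$ into small one-sided neighborhoods of the singularities and a compact remainder, and to check the inequality \eqref{e:log-growth-intro} on each piece separately.

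\emph{Away from the singularities.} Fix small one-sided neighborhoods $U$ of each $x_*\in\mathcal A_f$ on which Corollary~\ref{c:floor-function} applies. Near the endpoints $a,b$ we use property~\ref{prop:endpoints} of Theorem~\ref{thm:AnalyticFlowRepresentation}. These two facts, together with smoothness of $f$ elsewhere, show that $f''$ is bounded on the complement of $\bigcup U$. There $\min_{y\in\mathcal A_f}|x-y|$ is bounded below, so the right-hand side of \eqref{e:log-growth-intro} is bounded above for any fixed $C$. Taking $B$ large therefore handles this region.

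\emph{Near a singularity.} Write $f=f_{sing}+f_{reg}$ on $U$ as in Corollary~\ref{c:floor-function}, with $h=|x-x_*|$; the term $f_{reg}''$ is bounded and is absorbed into $B$. By the proof of the corollary, $f_{sing}$ is a finite sum of passage times $\tau_i(h_i)$ through saddle sectors. Each $\tau_i$ is a log-Puiseux series by Theorem~\ref{t:time}, where $h_i$ is the Hamiltonian level in the $i$-th sector. Up to an additive constant from changing the local Hamiltonian, $h_i=h$, since the local Hamiltonian is conserved along the flow and parametrizes $\Gamma$. So $f_{sing}(h)$ is itself a convergent log-Puiseux series $\sum_{k\ge k_0}(a_k+b_k\ln h)h^{k/n}$ with $f_{sing}\to+\infty$ as $h\to0^+$. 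We take its leading term: the smallest exponent $k/n$ with $(a_k,b_k)\neq(0,0)$, and among those the $\ln h$ coefficient if it is present.

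\emph{Leading-term analysis.} Since $f\to+\infty$, the leading term is one of the following:
\begin{enumerate}
\item $a h^{-\beta}$ with $\beta>0$, $a>0$; then $f''\sim a\beta(\beta+1)h^{-\beta-2}\gg h^{-2}$.
\item $b\,h^{-\beta}\ln h$ with $\beta>0$, $b<0$; then $f''\sim b\beta(\beta+1)h^{-\beta-2}\ln h$, which is positive and $\gg h^{-2}$.
\item $b\ln h$ with $b<0$ (so $\beta=0$); then $f''=-b h^{-2}+O(h^{-2+1/n}|\ln h|)$.
\end{enumerate}
Differentiating term by term is legitimate because the series converges absolutely on a neighborhood and its derivatives are again log-Puiseux series. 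In each case $f''\ge c\,h^{-2}$ for small $h$ with some $c>0$. Shrinking $U$ (and enlarging $B$ for what is removed) gives \eqref{e:log-growth-intro} on $U$, with $C$ the minimum of these constants over the finitely many one-sided neighborhoods. Here $h=|x-x_*|$ equals $\min_{y\in\mathcal A_f}|x-y|$ once $U$ is small.

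\emph{Main obstacle.} The delicate point is ruling out a singularity weaker than logarithmic. The worry is that a blow-up might come only from terms like $h^{k/n}\ln h$ with $k>0$, but such terms tend to zero and cannot produce $f\to\infty$. So the leading term must have exponent $\le0$, and exponent $0$ forces the $\ln h$ case. One must also check that the leading coefficients of the sector passage times along the path cannot cancel to leave a bounded remainder. This holds because each $\tau_i\to+\infty$ and all are positive, so their sum diverges. The sum's leading term is the dominant one among the $\tau_i$, and by the case analysis above its sign is forced by positivity.
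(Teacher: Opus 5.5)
Your proposal is correct and follows essentially the same route as the paper. Near each singularity you use Corollary~\ref{c:floor-function} to reduce to the leading term of a log-Puiseux series, and you get the same three cases ($c\,h^{-r}$, $-c\,h^{-r}\ln h$, $-c\ln h$) giving $f''\ge C_1 h^{-2}$ with $h=|x-x_*|$; you then absorb the smooth part and the region away from the singularities (using property 4 of Theorem~\ref{thm:AnalyticFlowRepresentation} at the endpoints) into $B$. Your extra remarks, identifying the sector levels with $h$ and ruling out cancellation among passage times, only elaborate what the paper takes directly from the corollary and from $f\to\infty$.
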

\begin{proof}
Let us first restrict ourselves to a small enough one-sided neighborhood $U$ of some singularity $x_*$ of $f(x)$ and prove that we have
\begin{equation} \label{e:log-local}
    f''(x) \ge \frac {C_1}{(x-x_*)^2}
\end{equation}
in $U$ for some $C_1=C_1(U) > 0$.
By Corollary~\ref{c:floor-function}, in $U$ the function $f(x)$ is a sum of a $C^\infty$ function and a function which is a sum of some converging \lp series.
Let $f_{lead}(x)$ be the function given by the leading term of this series only.
As $f(x)$ has an infinite singularity, the leading term can only be be one of the following:
\[
-c\ln(h), \; ch^{-r}, -c\ln (h)h^{-r},
\qquad c, r > 0.
\]
The function $f_{lead}(x)$ is obtained by plugging in $h=|x-x_*|$, and we see that it satisfies~\eqref{e:log-local} (with $f_{lead}''$ instead of $f''$) in each of the three cases above.
Making $U$ smaller if necessary, we see that the leading term dominates all the other terms of the series, as well as the smooth function, so $f''$ also satisfies~\eqref{e:log-local}, but with a smaller constant.

Clearly, one can select a uniform constant $C_1$ such that~\eqref{e:log-local} holds with this $C_1$ in sufficiently small right and left one-sided neighborhoods of each singularity $x_*$ of $f(x)$.
Now we can get Condition~\eqref{e:log-growth-intro} by picking $C$ there to be equal to the uniform $C_1$, and $B$ there to be positive and big enough so that~\eqref{e:log-growth-intro} is satisfied outside the selected neighborhoods of singularities as well (note that condition 4 in  Theorem~\ref{thm:AnalyticFlowRepresentation} plays a crucial role here).
\end{proof}

\section{Induced IETs with an "optimal" number of discontinuities} \label{s:induced-IET}
Let  $T:[0,1)\rightarrow [0,1)$  be a right-continuous IET and let $\Delta:=[a,b)\subseteq [0,1)$.  A point $x\in\Delta$ is called a \textit{$T$-cut on $\Delta$} if there exists an $i\in\{0,...,h_{\Delta,T}(x)-1\}$ such that $T^ix$ is a discontinuity of $T$. Our goal in this section is to prove the following result. We say that a right-continuous  IET $T:[0,1)\rightarrow [0,1)$ is aperiodic if for every $x\in [0,1)$ and every $n\in\N$, $T^nx\neq x$.
\begin{theorem}\label{thm:RegularDisc}
    Let $T:[0,1)\rightarrow [0,1)$ be an aperiodic, right-continuous  IET and let $\epsilon>0$. There exist an $M>1$ and points $y_1,...,y_{M-1}\in (0,1)$, $y_1<\cdots<y_{M-1}$, such that:
    \begin{enumerate}
    \item [(i)] Setting $y_0=0$ and $y_M=1$, we have
    $$
\max_{0\leq j<M}|y_{j+1}-y_j|\leq \epsilon.
    $$
    \item [(ii)] Let $j\in\{0,...,M-1\}$ and set $\Delta=[y_j,y_{j+1})$. Every discontinuity of $T_{\Delta}$ is a $T$-cut on $\Delta$. Furthermore, if $y$ is the discontinuity of $T_\Delta$ with $\lim_{x\rightarrow y^-}T_\Delta x=y_{j+1}$, then there is $r\in \{0,...,i-1\}$ with $T^r y$ a discontinuity of $T$, where $i=\lim_{x\rightarrow y^-}h_{\Delta,T}(x)$.  
    \end{enumerate}
\end{theorem}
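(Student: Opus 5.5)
The plan is to choose the partition points $y_j$ on forward orbits of discontinuities of $T$. By Proposition~\ref{prop:TypesOfDiscontinuities}, each discontinuity $\alpha$ of $T_\Delta$ falls into (c.1), (c.2) or (c.3). Case (c.1) is exactly the statement that $\alpha$ is a $T$-cut. So we only have to arrange that any $\alpha$ in case (c.2) or (c.3) is also in case (c.1), and, for (ii), that the particular discontinuity $y$ attached to the right endpoint $y_{j+1}$ is a $T$-cut before its return time from the left.

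We use that $T$ is aperiodic. Then the forward orbit $\{T^n d : n\ge 1\}$ of some discontinuity $d$ of $T$ is dense in $[0,1)$; this is the classical fact quoted in the outline, and if needed we restrict to the minimal components of an aperiodic IET. Concretely, let $d_1,\dots,d_{N-1}$ be the discontinuities of $T$ and set
$$
\mathcal O=\{T^n d_k : n\ge 1,\ 1\le k\le N-1\}\cup\{0\}.
$$
We take all interior points $y_1<\dots<y_{M-1}$ in $\mathcal O$. Density gives a choice with mesh at most $\epsilon$, which is (i).

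For (ii), fix $\Delta=[y_j,y_{j+1})$. The two endpoints are handled as follows.

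\emph{Left endpoint, case (c.2).} Suppose $T^{h}\alpha=y_j$ with $h=h_{\Delta,T}(\alpha)$. If $y_j=T^n d_k$, then both $\alpha$ and $d_k$ lie on the same backward orbit of $y_j$. We want $d_k$ to appear among $\alpha,T\alpha,\dots,T^{h-1}\alpha$. That holds provided the orbit segment $d_k,Td_k,\dots,T^{n-1}d_k$ does not re-enter $\Delta$ before reaching $y_j$. If it did re-enter, $\alpha$ would lie beyond $d_k$ in the backward direction. In that case we handle it by choosing $n$ minimal, i.e.\ we take $y_j$ to be an early enough visit. The cleanest choice is to build the partition from a single finite orbit segment $S=\{T^n d_k : 1\le n\le L\}$, with $L$ large so that $S$ is $\epsilon$-dense, and to take all points of $S$ (for all $k$) as partition points. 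Now let $n$ be such that $y_j=T^n d_k$. Every intermediate point $T^m d_k$ with $1\le m<n$ is itself a partition point, hence an endpoint of some partition interval. Such a point could equal $y_j$ only if $T$ were periodic, and it lies in $\Delta$ only if it equals $y_j$. So the backward orbit from $y_j$ reaches $d_k$ before re-entering $\Delta$, and $d_k$ appears as $T^i\alpha$. Hence $\alpha$ is a $T$-cut.

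\emph{Right endpoint, case (c.3) and the last claim of (ii).} Suppose $\lim_{x\to\alpha^-}T^i x=y_{j+1}$. The same argument applies to the left-limit orbit. We trace $y_{j+1}=T^n d_k$ backwards through left limits; the left limit at $d_k$ still passes through $d_k$ since $T^{-1}$ is left-continuous there. We conclude that $T^r\alpha$ is a discontinuity for some $r<i$. This also yields the final assertion about $y$ with $\lim_{x\to y^-}T_\Delta x=y_{j+1}$.

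A subtle point is whether $d_k$ itself should be a partition point ($n=0$). Including $0$ and the points $d_k$ in the partition is harmless, and it handles the index $n=0$ directly since then $\alpha=d_k$.

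The main obstacle is the bookkeeping in these two endpoint cases. One must ensure that the chain from $\alpha$ to the endpoint really passes through $d_k$ within the first return time, and not after leaving $\Delta$ and coming back. Including the whole orbit segment $\{T^m d_k\}_{0\le m\le L}$ in the partition is the key device, so I would first check carefully that every intermediate point being an endpoint prevents premature returns. The left-limit version for (c.3) requires the same care with one-sided limits at discontinuities.
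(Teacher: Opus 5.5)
Your main construction (partition points $T^nd_k$, $1\le n\le L$, over all $k$) and your treatment of case (c.2) are the paper's. Note only that what holds, and what you need, is Lemma~\ref{lem:DensenesInAperiodicT}, i.e.\ density of the \emph{union} of these orbits; a single orbit need not be dense when $T$ is not minimal.

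The first genuine error is your claim that also inserting the points $d_k$ themselves ($n=0$) is harmless, which your last paragraph even calls the key device. The justification is wrong: if $y_j=d_k$ and $T_\Delta\alpha=y_j$ with return time $h\ge 1$, then $\alpha=T^{-h}d_k\neq d_k$. The claim itself also fails. Take an irrational rotation written as a $2$-IET, whose only discontinuity is $d=1-\theta$, and suppose $d=y_j$ is a partition point. Let $\alpha=T^{-h}d$ be the first backward return of $d$ to $\Delta$. Then $\alpha$ lies in $(y_j,y_{j+1})$, and it is a discontinuity of $T_\Delta$ with $T_\Delta\alpha=y_j$, since a left neighbourhood of $\alpha$ cannot be translated to the left of $y_j$. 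Yet $T^r\alpha\neq d$ for $0\le r<h$ by aperiodicity. So $\alpha$ is not a $T$-cut, and (ii) fails. The orbit segments must therefore start at $m=1$, as in the paper. The point $0$ causes no trouble only because aperiodicity forces $0=T\beta$ for some discontinuity $\beta$, so $0$ is already of that form.

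The second gap is the right-endpoint case (c.3), and with it the final assertion of (ii). The orbit of left limits $\lim_{x\to\alpha^-}T^mx$ is the orbit of $\alpha$ under the left-continuous version $T_-$ of $T$. The last step of your backward tracing is exactly where $T_-$ and $T$ differ: $T_-(d_k)\neq T(d_k)$ because $d_k$ is a discontinuity, so tracing $Td_k$ back along left limits does not land on $d_k$.

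What is missing is the paper's reduction to the genuine orbit. Suppose no $T^r\alpha$ with $0\le r<i$ is a discontinuity of $T$. Then $T^i$ is continuous at $\alpha$, so $T^i\alpha=\lim_{x\to\alpha^-}T^ix=y_{j+1}$; this is impossible outright if $y_{j+1}=1$. Otherwise write $y_{j+1}=T^nd_k$ with $1\le n\le L$. If $i<n$, then $\alpha=T^{n-i}d_k$ is a partition point lying in $(y_j,y_{j+1})$, which is absurd. Hence $i\ge n$ and $T^{i-n}\alpha=d_k$ with $0\le i-n<i$, a contradiction.

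This step uses $\alpha\neq y_j$ rather than your no-re-entry argument. That argument does not transfer verbatim, because an intermediate point $T^md_k$ may equal $y_j\in\Delta$. The reduction applies unchanged to $i=\lim_{x\to\alpha^-}h_{\Delta,T}(x)$, which gives the last claim of (ii).
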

As we explain in Subsection \ref{Sec:ProofOfNiceLattice} below, Theorem \ref{thm:RegularDisc} is a consequence of Proposition \ref{prop:TypesOfDiscontinuities} and the following consequence of Theorem 5.1.1 in \cite{CornfeldErgodicTheory}.
\begin{lemma}[Cf. Theorem 5.1.1 in \cite{CornfeldErgodicTheory}.]\label{lem:DensenesInAperiodicT} Let $T:[0,1)\rightarrow [0,1)$ be a right-continuous IET with at least one discontinuity. Let $0<d_1<\cdots<d_{N-1}<1$ be a list of the discontinuities of $T$. Then, $T$ is aperiodic if and only if
$$
\bigcup_{j=1}^{N-1}\{T^nd_j\,|\,n\in\N\}
$$
is dense in $[0,1)$.
\end{lemma}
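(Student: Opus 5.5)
The plan is to prove the two implications separately. Throughout, write $O=\bigcup_{j=1}^{N-1}\{T^nd_j\,|\,n\in\N\}$ for the union of the forward orbits of the discontinuities. The one structural fact I will use repeatedly is this: if $I$ is an open interval containing no discontinuity of $T$, then $T|_I$ is a translation and $T(I)$ is an open interval of the same length. I will also use that $T$ is a bijection.

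\emph{Aperiodic $\Rightarrow$ $O$ dense.} I argue by contradiction: suppose $\overline O\neq[0,1)$.
\begin{enumerate}
\item Choose an open interval $J\subseteq(0,1)$ disjoint from $\overline O$. Enlarge it to a maximal such interval, i.e.\ take $J$ to be a connected component of $(0,1)\setminus\overline O$.
\item Check that $J$ contains no discontinuity $d_j$. If $d_j\in J$, then by minimality of the return time $T^{n}d_j$ returns near $d_j$ for some $n\ge1$. The cleaner route avoids this: $T(J)$ contains no discontinuity, and $d_j\notin J$ will follow once we know $T^{-1}$-images behave. So I handle it as follows. If $d_j\in J$, then since $T$ is a bijection preserving Lebesgue measure, Poincar\'e recurrence applied to the open set $J$ and right-continuity at $d_j$ give some $n\geq 1$ with $T^nd_j\in J$, a contradiction since $T^nd_j\in O$. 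Hence $T$ is a translation on $J$ and $T(J)$ is an open interval of the same length.
\item Show $T(J)\cap O=\emptyset$. Suppose $T^nd_j\in T(J)$ with $n\ge1$. If $n\ge2$, injectivity gives $T^{n-1}d_j\in J\cap O$. If $n=1$, it gives $d_j\in J$. Both are excluded.
\item Iterate steps 2 and 3 to conclude that every $T^k(J)$ is an open interval of length $|J|$ disjoint from $O$, and hence also containing no discontinuity.
\item By Poincar\'e recurrence, $T^k(J)\cap J\neq\emptyset$ for some $k\ge1$. Then $T^k(J)\cup J$ is an open interval disjoint from $O$. Since $T^k(J)$ is connected and disjoint from $\overline O$ (it is open and misses $O$), maximality of $J$ forces $T^k(J)\subseteq J$. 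Equal lengths then give $T^k(J)=J$.
\item $T^k|_J$ is a translation mapping $J$ onto itself, so it is the identity, i.e.\ $T^kx=x$ for $x\in J$. This contradicts aperiodicity.
\end{enumerate}

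\emph{$O$ dense $\Rightarrow$ aperiodic.} I prove the contrapositive. Suppose $T^nx=x$ for some $x$ and $n\ge1$. Since there are finitely many discontinuities and $T$ is right-continuous, there is $\delta>0$ such that for every $k<n$ the interval $T^k((x,x+\delta))$ contains no discontinuity of $T$. Set $I=(x,x+\delta)$. Then each $T^k$, $k\le n$, is a translation on $I$. Since $T^n$ fixes $x$ and is a translation on $I$, we get $T^n|_I=\mathrm{id}$, so $T^{-1}(I)=T^{n-1}(I)$, and more generally $T^{-r}(I)=T^{n-r}(I)$.

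Now suppose $T^md_j\in I$ for some $m\ge1$. Pulling back one step at a time, $d_j\in T^{-m}(I)=T^{k}(I)$ for some $0\le k<n$. But these open intervals contain no discontinuities, a contradiction. So $O\cap I=\emptyset$, and $O$ is not dense.

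The only delicate step is the maximality argument in the first direction: one has to take the component of $(0,1)\setminus\overline O$ rather than of the complement of $O$, and make sure the endpoint $0$ causes no trouble. Working inside $(0,1)$ with open intervals handles both points.
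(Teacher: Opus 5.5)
Your proof is correct. The direction ``periodic point $\Rightarrow$ $O$ not dense'' is essentially the paper's: you take a right-neighbourhood $(x,x+\delta)$ on which $T^n$ is the identity and whose levels $T^k(x,x+\delta)$, $0\le k<n$, contain no discontinuity. Your pull-back argument just spells out why no $T^md_j$ can land there.

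For ``aperiodic $\Rightarrow$ $O$ dense'' you take a genuinely different route.
\begin{itemize}
\item The paper argues directly. For an arbitrary $[a,b)$, Lemma~\ref{lem:InducedIETDichotomy} says that if no point of $[a,b)$ is a $T$-cut then $T_{[a,b)}=\mathrm{Id}$. Aperiodicity therefore yields a $T$-cut $x$ with $T^ix=d_j$, and then $T^{h_{[a,b),T}(x)-i}d_j=T_{[a,b)}x\in[a,b)$.
\item You argue by contradiction. You take a maximal gap $J$ of $\overline O$ and show its forward images are again gaps of the same length on which $T$ acts by translation. Recurrence of the positive-measure set $J$ plus maximality then forces $T^kJ=J$, hence $T^k|_J=\mathrm{id}$. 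This is a Denjoy-type ``wandering gap'' argument.
\end{itemize}
Both proofs rest on the same two ingredients. One is a maximal connected component; in the paper it is the component $\mathcal J$ inside the tower over $[a,b)$. The other is the fact that a translation mapping an interval into itself is the identity. The paper packages these into a reusable dichotomy for induced IETs and locates an explicit orbit point in every interval. Yours is shorter and never mentions induced maps.

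One point to tighten. In Step~2, and again in Step~4, you need that the \emph{particular} point $d_j$ returns to an open interval containing it. Poincar\'e recurrence gives this only for almost every point. Right-continuity does not upgrade it directly, because the right-neighbourhood on which $T^n$ is a translation can shrink as $n$ grows. What you actually use is that $h_{\Delta,T}(d_j)<\infty$ for $\Delta=[d_j,d_j+\eta)\subseteq J$. That is precisely the everywhere-recurrence fact the paper asserts after \eqref{eq:FirstReturnTime} and also relies on in its proof of Lemma~\ref{lem:InducedIETDichotomy}. So it is not an extra gap relative to the paper, but you should cite it in that form rather than as ``Poincar\'e recurrence plus right-continuity.'' Also delete the abandoned half-sentences at the start of Step~2.
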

For completeness,
we present the proof of Lemma \ref{lem:DensenesInAperiodicT} next in Subsection \ref{Sec:PrelimPRoofNiceLAttice}.
\subsection{The proof of Lemma \ref{lem:DensenesInAperiodicT}}\label{Sec:PrelimPRoofNiceLAttice}
We will derive Lemma \ref{lem:DensenesInAperiodicT} from the following dichotomy principle dealing with induced IETs.
\begin{lemma}\label{lem:InducedIETDichotomy}
Let $T:[0,1)\rightarrow [0,1)$ be a right-continuous IET and let $a,b\in [0,1)$ be such that $a<b$. Then, either (I) at least one $x\in [a,b)$ is a $T$-cut on $[a,b)$ or (II) $T_{[a,b)}x=x$ for every $x\in [a,b)$.
\end{lemma}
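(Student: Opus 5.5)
Assume (I) fails, i.e. no $x\in[a,b)$ is a $T$-cut on $\Delta=[a,b)$, and set $h(x)=h_{\Delta,T}(x)$. The plan is to show first that $h$ is constant on $\Delta$ and then that $T_\Delta$ is the identity.

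\emph{Step 1: $h$ is locally constant.} Since no point is a $T$-cut, for each $x\in\Delta$ none of the points $x,Tx,\dots,T^{h(x)-1}x$ is a discontinuity of $T$. Hence $T,\dots,T^{h(x)}$ are continuous at $x$, and in fact $T^i$ is a translation on a two-sided neighbourhood of $x$ for $i\le h(x)$. By right-continuity, $T^i$ is a translation on a right neighbourhood $[x,x+\delta)$, and $T^ix\notin[a,b)$ for $0<i<h(x)$. We must check that the intermediate iterates of nearby points also stay outside $\Delta$. Because $T^ix\notin[a,b)$, either $T^ix<a$ or $T^ix\ge b$. If $T^ix\ge b$, nearby points $T^iy$ might fall just below $b$ when $y<x$; but the translation property gives $T^iy=T^ix+(y-x)$, so for $y\ge x$ they stay $\ge b$. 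Thus $h$ is right-locally constant on $\Delta$.

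For left-local constancy at $x\in(a,b)$ the critical case is $T^ix=b$ for some $0<i<h(x)$. Then points $y<x$ near $x$ satisfy $T^iy\in\Delta$, so $h(y)\le i<h(x)$. I expect this to be the main obstacle. The way around it is to exclude it using the absence of cuts. Let $y<x$ be close to $x$ and consider the orbit of $y$ up to time $h(y)$. Since $y$ is not a cut, the map $T^{h(y)}$ is a translation near $y$, and the returns of $y$ avoid discontinuities. A cleaner route is a counting/measure argument instead of pointwise continuity, which I would try first. Since $h$ is right-locally constant and each $T^{i}$ is a single translation on each right-maximal interval of constancy, $\Delta$ splits into finitely many intervals $J_k=[c_k,c_{k+1})$ on which $h\equiv h_k$ and $T_\Delta$ is a translation. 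Finiteness follows because $h$ is bounded on compact pieces; more robustly, endpoints $c_k$ must be points where some $T^ix$ hits $a$ or $b$ from the left, or a discontinuity, and there are finitely many such candidates with bounded $i$.

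\emph{Step 2: $T_\Delta$ is the identity.} Consider the tower $\bigcup_k\bigcup_{i<h_k}T^iJ_k$. Its levels are pairwise disjoint intervals (first-return towers over $\Delta$ are disjoint), and $T$ is a translation on each level, since no level contains a discontinuity of $T$ in its interior or at its left endpoint; indeed a discontinuity $d$ in $T^iJ_k$ would make $T^{-i}d$ a cut. So the union $U$ of the levels is a $T$-invariant finite union of intervals, of the form $U=T^{-1}U$ up to finitely many points. On $U$, $T$ has no discontinuities other than possibly at the endpoints of $U$'s components, and it acts by translations on a finite union of intervals. Such a map permutes those components rigidly, so some power $T^p$ is the identity on $U$. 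In particular every $x\in\Delta$ is periodic.

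It remains to show $T_\Delta x=x$ rather than just that $T_\Delta$ is periodic. Each level $T^iJ_k$ is an interval on which $T$ is continuous, and the top level $T^{h_k-1}J_k$ maps into $\Delta$ as an interval. Suppose $T_\Delta J_k$ intersected two different $J_{k'}$, or were a shifted copy of some $J_{k'}$. Then a boundary point $c_{k'}$ would be hit by the orbit of an interior point of $J_k$. But a boundary $c_{k'}$ is a point where the return time jumps, which by Step 1 must come from a discontinuity of $T$ or from the left-approach to $b$ along the orbit. Tracing back the orbit produces a cut in $J_k$, which is a contradiction. Hence $T_\Delta$ maps each $J_k$ onto some $J_{k'}$ by translation, and since the $J_k$ are ordered consecutively in $\Delta$ and $T$ preserves orientation without discontinuities, the same consideration applied at $a$ (where $T_\Delta$ must hit $a$ from some $J_k$) forces $T_\Delta|_{J_k}=\mathrm{id}$. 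Hence (II) holds.
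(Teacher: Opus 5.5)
\textbf{There is a genuine gap in your final step, the passage from ``$T$ is periodic on $U$'' to $T_\Delta=\mathrm{id}$.} Everything before it is sound. The finite partition into the $J_k$ should be justified by citing that induced maps of IETs are IETs (Proposition~\ref{prop:TypesOfDiscontinuities}), not by ``$h$ is bounded on compact pieces'', which assumes what is at stake. Your first half of Step~2 is correct: $U$ is $T$-invariant, contains no discontinuity of $T$, and $T$ permutes its components by translations.

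The final step fails for two reasons. First, your classification of the endpoints $c_{k'}$ omits the case $T_\Delta c_{k'}=a$ (type (c.2)). Also, ``tracing back the orbit produces a cut'' is not justified: an orbit reaching $a$ or $b$ involves no discontinuity of $T$. Second, and more fundamentally, no local argument at the endpoints of the $J_k$ can exclude $T_\Delta$ being a nontrivial rigid permutation of the $J_k$. Suppose $m=(a+b)/2$ and $T_\Delta$ swapped $[a,m)$ and $[m,b)$. Then the only discontinuity of $T_\Delta$ is at $m$, with $T_\Delta m=a$ and $\lim_{y\to m^-}T_\Delta y=b$. These are of type (c.2)/(c.3), not cuts, so your ``same consideration applied at $a$'' produces no contradiction.

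\textbf{The missing idea is the global rigidity argument that the paper uses.} Let $K$ be the connected component of $U$ containing $\Delta$ (the paper's $\mathcal J$), and let $q\ge1$ be minimal with $T^qK\cap K\neq\emptyset$. Since $U$ is forward invariant and contains no discontinuity of $T$, each $T^iK$ is a translate of $K$, so $T^qK=K+\sigma$. As $K\cup(K+\sigma)$ is an interval contained in $U$, maximality of $K$ gives $K+\sigma\subseteq K$, hence $\sigma=0$. Thus $T^q=\mathrm{id}$ on $K\supseteq\Delta$, while $T^iK\cap K=\emptyset$ for $0<i<q$. Therefore $h\equiv q$ on $\Delta$ and $T_\Delta=\mathrm{id}$.

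With the permutation of components you already have, this is immediate: take $q$ to be the length of the cycle containing $K$. It also makes the partition into the $J_k$ and the endpoint analysis unnecessary.
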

\begin{proof}
We will show that if no $x\in[a,b)$ is a $T$-cut on $[a,b)$,  then $T_{[a,b)}x=x$ for every $x\in[a,b)$.  Thus, assume that for every $x\in[a,b)$ and every $i\in\{0,...,h_{[a,b),T}(x)-1\}$, $T$ is continuous at $T^ix$ (and, so, $T$ is continuous at $T^ix$ for each $i\in \N\cup\{0\}$).\\
Let  $\mathcal I_1,...,\mathcal I_M$ be a list of the maximal subintervals of $[0,1)$ exchanged by $T$. Since $T$ is continuous on $[a,b)$, we can assume by re-indexing the $\mathcal I_j$s, if needed, that without loss of generality $[a,b)\subseteq \mathcal I_1$.\\
Let $\mathcal J$ be the maximal connected component of the set
$$\mathcal T:=\{T^ix\,|\,x\in[a,b)\text{ and }i\in\{0,...,h_{[a,b),T}(x)-1\}\}\cap\mathcal I_1$$
with the property that $\mathcal J\cap[a,b)\neq\emptyset$. Note that for every $y\in \mathcal J$, there is an $x\in [a,b)$ and an $i\in \{0,...,h_{[a,b),T}(x)-1\}$ with $T^ix=y$. Thus, we have that for every $i\in\N\cup\{0\}$, $T$ is continuous on $T^i\mathcal J$. Letting $i_0$ be the least $n\in\N$ for which $T^n\mathcal J\cap \mathcal J\neq \emptyset$, we see  that for every $i\in\{0,...,i_0-1\}$, $T^{i+1}\mathcal J$ is an interval and, so,  there is a $j_i\in\{1,...,M\}$ with $T^{i+1}\mathcal J\subseteq \mathcal I_{j_i}$ (otherwise, $T$ would be discontinuous on $T^{i+1}\mathcal J$ and, so, there would be an $x\in[a,b)$ which is a $T$-cut on $[a,b)$). In turn, this implies that for some $\sigma\in\R$, $T^{i_0}\mathcal J=\mathcal J+\sigma$. \\
Noting that $\mathcal J\cup T^{i_0}\mathcal J$ is an interval and that $\mathcal J\cup T^{i_0}\mathcal J \subseteq \mathcal T$, we conclude, by the maximality of $\mathcal J$, that $T^{i_0}\mathcal J=\mathcal J+\sigma\subseteq \mathcal J$. Because this can only happen when $\sigma=0$ and since for every $x\in[a,b)$, $h_{[a,b),T}(x)\geq i_0$, we conclude that $T_{[a,b)}x=x$ for every $x\in[a,b)$.
\end{proof}
\begin{proof}[Proof of Lemma \ref{lem:DensenesInAperiodicT}.]
    If $T$ is not aperiodic, then we can find an $a\in [0,1)$ and an $h\in\N$ for which $T^ia\neq a$ for $i\in\{1,...,h\}\setminus\{h\}$ and $T^ha=a$. It follows from the right continuity of $T$  that there is a $b\in (a,1)$ for which the sets
    $$ T^i[a,b),\,i\in\{0,...,h-1\},$$
    are pairwise disjoint intervals and $T^h[a,b)=[a,b)$. In turn, this implies that $T_{[a,b)}x=x$ for each $x\in [a,b)$ and, so, there is an $\epsilon>0$ for which
    $$
\bigcup_{j=1}^{N-1}\{T^nd_j\,|\,n\in\N\}\cap (a,a+\epsilon)=\emptyset.
    $$

    Suppose now that $T$ is aperiodic. In order to prove that $\bigcup_{j=1}^{N-1}\{T^nd_j\,|\,n\in\N\}$ is dense in $[0,1)$, we will show that for any $a,b\in [0,1)$ with $a<b$, there is a $j\in\{1,..,N-1\}$ and a $k\in\N$ with $T^kd_j\in[a,b)$.\\
    To do this, first note that because  $T$ is aperiodic, one cannot have $T_{[a,b)}x=x$ for each $x\in [a,b)$. So, by
    Lemma \ref{lem:InducedIETDichotomy}, there is an $x\in[a,b)$ which is a $T$-cut on $[a,b)$. In other words, one can find $x,y\in [a,b)$, a $j\in\{1,...,N-1\}$, and an $i\in\{0,...,h_{[a,b),T}(x)-1\}$  with $T^{h_{[a,b),T}(x)}x=y$ and $T^ix=d_j$. Thus, taking $k=h_{[a,b),T}(x)-i$, we get $T^kd_j=y$. We are done.
\end{proof}

\subsection{The proof of Theorem \ref{thm:RegularDisc}}\label{Sec:ProofOfNiceLattice}
\begin{proof}[Proof of Theorem \ref{thm:RegularDisc}.]
Because $T$ is aperiodic, it has at least one discontinuity. We will let
$$
0<d_1<\cdots<d_{N-1}<1
$$
denote the discontinuities of $T$ and set $d_0=0$ and $d_N=1$. Fix $\epsilon>0$. In order to prove Theorem \ref{thm:RegularDisc} we will first find $y_0,...,y_{M}$ and show that they satisfy item (i) in Theorem \ref{thm:RegularDisc} and then prove that $y_0,...,y_{M}$ also satisfy (ii) in Theorem \ref{thm:RegularDisc}.\\
\qedsymbol \textit{ Defining $y_0,...,y_{M}$ and checking that (i) holds.} By Lemma \ref{lem:DensenesInAperiodicT}, we have that $$\bigcup_{j=1}^{N-1}\{T^nd_j\,|\,n\in\N\}$$
is dense in $[0,1)$. Thus, there exists a $K\in\N$ such that if we let $y_0<y_1<\cdots<\cdots<y_M$ be an enumeration of
$$\bigcup_{j=1}^{N-1}\{T^n d_j\,|\,n\in\{1,...,K\}\}\cup\{0,1\},$$
we obtain that
$$\max_{0\leq j< M}|y_{j+1}-y_j|\leq\epsilon.$$
\qedsymbol \textit{ Checking that (ii) holds.} Let $j\in\{0,...,M-1\}$, set $\Delta=[y_j,y_{j+1})$, and let $y$ be a discontinuity of $T_{[y_j,y_{j+1})}$. By Proposition \ref{prop:TypesOfDiscontinuities}, all that we need to check is that (I) if $y$ is such that $T_\Delta y=y_j$, then $y$ is a $T$-cut on $\Delta$ and that  (II) if there exists an $i\in\{1,...,h_{\Delta,T}(y)-1\}$ with $\lim_{x\rightarrow y^-}T^ix=y_{j+1}$, then there is an $r\in \{0,...,i-1\}$ with $T^ry$ a discontinuity of $T$.\\
\qedsymbol \textit{ $T_\Delta y=y_j$.} Suppose first that $T_\Delta y=y_j$. Note that there exists a discontinuity $\alpha$ of $T$ and an $n\in\{1,...,K\}$ for which $T^n\alpha=y_j$ (in the case that $y_j=0$, one can take $\alpha$ to be the discontinuity $\beta$ of $T$ for which $T\beta=0$). Noting that
$$
\bigcup_{j=1}^{N-1}\{T^nd_j\,|\,n\in\{1,...,K\}\}\cap (y_j,y_{j+1})=\emptyset,
$$
we obtain that there exists an $i\in\{0,...,h_{\Delta,T}(y)-1\}$ with $T^iy\in\{d_1,...,d_{N-1}\}$. So, in particular, $y$ is a $T$-cut on $\Delta$.\\
\qedsymbol \textit{ There exists an $i\in\{1,...,h_{\Delta,T}(y)-1\}$ with $\lim_{x\rightarrow y^-}T^ix=y_{j+1}$.} Suppose now that  there exists an $i\in\{1,...,h_{\Delta,T}(y)-1\}$ with $\lim_{x\rightarrow y^-}T^ix=y_{j+1}$. It suffices to show  that there is an  $r\in\{0,...,i-1\}$ with $T^ry$ a discontinuity of $T$. If this was not the case, we would have that for each $r\in\{0,...,i-1\}$, $T$ is continuous at $T^ry$ and, so, that $T^i$ is continuous at $y$.
This yields that 
$$
y_{j+1}=\lim_{x\rightarrow y^-}T^ix=T^iy. 
$$
So, by arguing as above, we obtain that for some $r\in \{0,...,i-1\}$, $T^ry$ is a discontinuity of $T$, a contradiction. We are done.
\end{proof}
\section{Proof of Theorem A$^\prime$}
Let $\mathcal A=\{y_1,...,y_{D}\}$, $D\in\N$, be a subset of $(0,1)$ and let $d_1,...,d_{N-1}\in\mathcal A$, $N>1$, be such that $0<d_1<\cdots<d_{N-1}<1$. Throughout this section  we will let $T:[0,1)\rightarrow [0,1)$ denote a right-continuous IET whose list of discontinuities is given by $\{d_1,...,d_{N-1}\}$ and let $f:([0,1)\setminus \mathcal A)\rightarrow [0,\infty)$ be a member of $C^2([0,1)\setminus\mathcal A)$. We further assume that for every $j\in\{1,...,D\}$,  $f$ satisfies
$$\lim_{x\rightarrow y_j^+}f(x)=\lim_{x\rightarrow y_j^-}f(x)=\infty.$$ \\
We let $T^f$ denote the special flow over $T$ and under the roof function $f$.
\begin{definition}\label{defn:SuperLog+Regular}
Let $T$ and $f$ be as above. The function $f$ is said to be of  \textit{at least logarithmic growth} if there exist  $B,C>0$ such that for every $x\in (0,1)\setminus \mathcal A$, one has
\begin{equation}\label{eq:SuperLog}
f''(x)\geq \frac{C}{\min_{y\in \mathcal A}|x-y|^2}-B.
\end{equation}
\end{definition}
\begin{proof}[Proof of Theorem \hyperlink{TheoremA'}{A$^{\prime}$}]
Note first that if $T$ is non-ergodic and $A$ is  a non-trivial $T$-invariant subset of $[0,1)\setminus\mathcal A$, then the set
$$
\{T^f_tx\,|\, x\in A\text{ and }t\in \R\} 
$$
is a non-trivial $T^f$-invariant subset of 
$$\{T_t^fx\,|\, x\in [0,1)\setminus\mathcal A\text{ and }t\in [0,f(x))\}.$$
Thus, assume now that $T$ is ergodic and suppose for the sake of contradiction that there exists a measurable function $\psi:[0,1)\rightarrow  \mathbb S^1$ with the property that for some non-zero $s\in\R$,
\begin{equation}\label{Defn:eigenFunction}
\psi(Tx)=e^{2\pi i s f(x)}\psi(x)
\end{equation}
for each $x\in[0,1)\setminus \mathcal A$. By replacing $\psi$ with its conjugate $\overline \psi$, if needed, we can assume without loss of generality that $s=|s|$. 
To arrive to the desired contradiction,  we will find $\epsilon\in(0,1/2)$, $h\in\N$, 
and $x_1,x_2\in [0,1)$ for which the expression 
$$ |s|\left(\sum_{i=0}^{h-1}f(T^ix_2)-\sum_{i=0}^{h-1}f(T^ix_1)\right)$$
is, simultaneously, $\epsilon$-close and $\epsilon$-away from $0\mod 1$. On the one hand, we will deduce from \eqref{Defn:eigenFunction}  that there is an $a\in\Z$ for which 
\begin{equation}\label{eq:FlatRoof}
\left| |s|\left(\sum_{i=0}^{h-1}f(T^ix_2)-\sum_{i=0}^{h-1}f(T^ix_1)\right)-a\right|<\epsilon.
\end{equation}
On the other hand, the fact that $f$ is of at least logarithmic growth will allow us to show that  
\begin{equation}\label{eq:ConcaveRoof}
\inf_{a\in\Z}\left| |s|\left(\sum_{i=0}^{h-1}f(T^ix_2)-\sum_{i=0}^{h-1}f(T^ix_1)\right)-a\right|>\epsilon
\end{equation}
\subsection{Preliminary dynamical constructions}
\qedsymbol \textit{ A sequence of convenient Rohlin towers.}
Because the eigenfunction  $\psi$ is measurable and $T$ is ergodic (which, in turn, implies that $T$ is aperiodic), Theorem \ref{thm:RegularDisc}
implies the existence of a  sequence of subintervals $\mathcal I_\ell:=[A_\ell,B_\ell)$, $\ell\in\N$, of $[0,1)$ with $\lim_{\ell\rightarrow\infty}m(\mathcal I_\ell)=0$ and a sequence of constants $(\kappa_\ell)_{\ell\in\N}$ in $\mathbb S^1$ such that for each $\ell\in\N$, 
\begin{enumerate}
\item [($\mathcal I_{\ell}.1$)]Every discontinuity $x$ of $T_{\mathcal I_\ell}$ is a $T$-cut on $\mathcal I_\ell$. Furthermore, if $y\in [A_\ell,B_\ell)$ is the discontinuity of $T_{\mathcal I_\ell}$ with $\lim_{x\rightarrow y^-}T_{\mathcal I_\ell}x=B_\ell$, then there is an $r\in \{0,...,i-1\}$ with $T^ry$ a discontinuity of $T$, where $i=\lim_{x\rightarrow y^-} h_{\mathcal I_\ell,T}(x)$.
    \item [($\mathcal I_{\ell}.2$)] The set,
$$
E_\ell:=\{x\in\mathcal I_\ell\,|\,|\psi(x)-\kappa_\ell|\geq\frac{1}{\ell+1}\}
$$
satisfies 
$$
\frac{m(\mathcal I_\ell\cap E_\ell)}{m(\mathcal I_\ell)}<\frac{1}{(D+1)(\ell+1)}.
$$
\end{enumerate}
Note that because $T$ is  aperiodic, $T_{\mathcal I_\ell}$ has at least 
one discontinuity on $\mathcal I_\ell$. By  ($\mathcal I_\ell$.1), we can find  a sequence of subintervals $\mathcal J_\ell:=[a_\ell,b_\ell)\subseteq \mathcal I_\ell$, $\ell\in\N$,  and a sequence  $(h_\ell)_{\ell\in\mathbb N}$ in $\mathbb N$, with the following properties:
\begin{enumerate}
    \item [($\mathcal J_\ell.0$)] One has that 
    $$
\frac{b_\ell-a_\ell}{m(\mathcal I_\ell)}\geq \frac{1}{D+1}
    $$
    and for each $x\in [a_\ell,b_\ell)$, $h_\ell=h_{\mathcal I_\ell,T}(x)$.
    \item [($\mathcal J_\ell.1$)] The collections of subsets $T^i[a_\ell,b_\ell)$, $i\in \{0,...,h_\ell-1\}$, is a collection of pairwise disjoint intervals and $\lim_{\ell\rightarrow\infty}(b_\ell-a_\ell)=0$.
    \item [($\mathcal J_\ell.2$)] For each $x\in (a_\ell,b_\ell)$ and each $i\in\{0,...,h_\ell-1\}$, $T^ix\not\in\mathcal A$ and, so,  $f(T^ix)$ and  $f''(T^ix)$ are well-defined.
    \item [($\mathcal J_\ell.3$)] For some $j\in \{0,...,h_\ell-1\}$ and some $s\in \{1,...,D\}$, one either has $T^ja_\ell=y_s$ or $\lim_{x\rightarrow b_\ell^-}T^jx=y_s$.\footnote{ We remark that the second part of ($\mathcal I_\ell$.1) (and that of item (ii) in Theorem \ref{thm:RegularDisc}) play a fundamental role in the deduction of ($\mathcal J_\ell$.3). Indeed, these clarifying comments provide relevant information about
     the least $r\in\{0,...,h_{\mathcal I_\ell,T}(y)-1\}$,  $y$ a discontinuity of $T_{\mathcal I_\ell}$, for which $T^ry$ is a discontinuity of $T$.}
    \item [($\mathcal J_\ell.4$)] The set $T^{h_\ell}[a_\ell,b_\ell)$ is a subinterval of $\mathcal I_\ell$.
    \item [($\mathcal J_\ell.5)$]  
    We have
    $$\frac{m((a_\ell,b_\ell)\cap E_\ell)}{m((a_\ell,b_\ell))},\frac{m(T^{h_\ell}(a_\ell,b_\ell)\cap E_\ell)}{m(T^{h_\ell}(a_\ell,b_\ell))}<\frac{1}{\ell+1}.$$
\end{enumerate}
\qedsymbol \textit{ Picking $\epsilon>0$ and other relevant constants.}
For each $\ell\in\N$ and each $x\in (a_\ell,b_\ell)$, let
$$F_\ell(x):=|s|\sum_{i=0}^{h_\ell-1}f(T^ix)$$
 and let $C,B>0$ be as in Definition \ref{defn:SuperLog+Regular}. Invoking 
  condition $(\mathcal J_\ell.3)$ we obtain that for some $C_1,C_2,c>0$ one has that for every $\ell\in\N$ large enough, 
 \begin{equation}\label{eq:LowerBoundForBirkhoff.1}
F_\ell''(x)\geq \frac{C_1}{(b_\ell-a_\ell)^2}- |s|h_\ell B\geq \frac{C_1}{(b_\ell-a_\ell)^2}- \frac{C_2}{b_\ell-a_\ell}\geq \frac{c}{(b_\ell-a_\ell)^2}
 \end{equation}
for every $x\in (a_\ell,b_\ell)$.

Set $\epsilon=\min\{\frac{1}{4},\frac{c}{50}\}$. We pick $\delta>0$  such that for any
 $x,y\in\R$ with
\begin{equation}\label{eq:DefnDelta}
|e^{2\pi ix}-e^{2\pi iy}|<\delta
\end{equation}
one has that
$\inf_{a\in\Z}|x-y-a|<\epsilon$.
\subsection{ The proofs of \texorpdfstring{\eqref{eq:FlatRoof} and \eqref{eq:ConcaveRoof}}{}}
\qedsymbol \textit{ Finding $x_1$.} Consider now the sequence of sets
$$
\mathcal J'_\ell:=[a_\ell,b_\ell)\setminus (E_\ell\cup T^{-h_\ell}E_\ell),\,\ell\in\N.
$$
Pick $\ell_0\in\mathbb N$ large enough to ensure that \eqref{eq:LowerBoundForBirkhoff.1} holds and that $\frac{1}{\ell_0+1}<\frac{\delta}{4}$. Further assume that $\ell_0>19$. Observe that 
\begin{multline*}
m(\mathcal J'_{\ell_0})>(b_{\ell_0}-a_{\ell_0})-m((a_{\ell_0},b_{\ell_0})\cap E_{\ell_0})-m(T^{h_{\ell_0}}(a_{\ell_0},b_{\ell_0})\cap E_{\ell_0})\\
>(b_{\ell_0}-a_{\ell_0})\left( 1-\frac{2}{\ell_0+1}\right)\geq \frac{9(b_{\ell_0}-a_{\ell_0})}{10}.
\end{multline*}

Because $F'_{\ell_0}$ is strictly increasing on $(a_{\ell_0},b_{\ell_0})$, we can find $x_1\in \mathcal J'_{\ell_0}$ such that at least one of 
$${\rm (a)}\;m((x_1,b_{\ell_0})\cap \mathcal J'_{\ell_0})>\frac{2}{5}(b_{\ell_0}-a_{\ell_0})\text{ and }F_{\ell_0}'(x_1)>0$$
and
$${\rm(b)}\;m((a_{\ell_0},x_1)\cap \mathcal J'_{\ell_0})>\frac{2}{5}(b_{\ell_0}-a_{\ell_0})\text{ and }F_{\ell_0}'(x_1)<0$$
holds.\\
As both cases (a) and (b) are handled similarly, we assume for simplicity that $x_1$ satisfies (a). It follows that for every $x\in (x_1,b_{\ell_0})$ there is a $z_x\in (x_1,b_{\ell_0})$ with 
\begin{equation}\label{eq:LowerBoundInGoodHalf}
F_{\ell_0}(x)-F_{\ell_0}(x_1)=\underbrace{F_{\ell_0}'(x_1)(x-x_1)}_{\geq 0}+\frac{F''_{\ell_0}(z_x)}{2}(x-x_1)^2\geq \frac{c}{2}\left(\frac{x-x_1}{b_{\ell_0}-a_{\ell_0}}\right)^2.
\end{equation}
\qedsymbol \textit{ Finding $x_2$ and the proof of \eqref{eq:ConcaveRoof}.} 
Let $y\in (x_1,b_{\ell_0})$ be such that $m((x_1,y)\cap \mathcal J'_{\ell_0})=\frac{1}{5}(b_{\ell_0}-a_{\ell_0})$ and note that, by \eqref{eq:LowerBoundInGoodHalf}, 
$$F_{\ell_0}(y)-F_{\ell_0}(x_1)\geq \frac{c}{2}\left(\frac{y-x_1}{b_{\ell_0}-a_{\ell_0}}\right)^2\geq 
\frac{c}{2}\left(\frac{\frac{1}{5}(b_{\ell_0}-a_{\ell_0})}{b_{\ell_0}-a_{\ell_0}}\right)^2
\geq \frac{c}{50}\geq \epsilon.$$
Thus, there is a least $y_0\in (x_1,y]$ with $F_{\ell_0}(y_0)-F_{\ell_0}(x_1)=\epsilon$.\\ 
Because $F_{\ell_0}$ is strictly increasing and unbounded 
on the interval $(x_1,b_{\ell_0})$, we have that for any $a\in\N$, there are $r_a,s_a,t_a\in [y_0,b_{\ell_0})$, $r_a<s_a<t_a$ with 
$$
F_{\ell_0}(r_a)-F_{\ell_0}(x_1)=(a-1)+\epsilon,\,F_{\ell_0}(s_a)-F_{\ell_0}(x_1)=a-\epsilon,\text{ and }F_{\ell_0}(t_a)-F_{\ell_0}(x_1)=a+\epsilon.
$$
So, since $F_{\ell_0}$ is concave-up on  $[y_0,b_{\ell_0})$,
\begin{equation}\label{eq:ConcavityInequality}
\frac{1-2\epsilon}{s_a-r_a}=\frac{F_{\ell_0}(s_a)-F_{\ell_0}(r_a)}{s_a-r_a}\leq \frac{F_{\ell_0}(t_a)-F_{\ell_0}(r_a)}{t_a-s_a}=\frac{2\epsilon}{t_a-s_a}.
\end{equation}
Noting that $\epsilon\leq 1/4$, formula \eqref{eq:ConcavityInequality} implies that 
$$
2\epsilon(t_a-s_a)<(1-2\epsilon)(t_a-s_a)\leq 2\epsilon(s_a-r_a)
$$
and, so, 
\begin{multline*}
m\Big(\{x\in (y_0,b_{\ell_0})\,|\,\inf_{a\in\Z}|F_{\ell_0}(x)-F_{\ell_0}(x_1)-a|> \epsilon \}\Big)\\
    > \frac{1}{2} m\Big(\{x\in (y_0,b_{\ell_0})\,|\,F_{\ell_0}(x)-F_{\ell_0}(x_1)> \epsilon \}\Big)\geq \frac{b_{\ell_0}-a_{\ell_0}}{10}.
\end{multline*}
Thus,
\begin{multline*}
 m\Big(\{x\in (y_0,b_{\ell_0})\cap \mathcal J'_{\ell_0}\,|\,\inf_{a\in\Z}|F_{\ell_0}(x)-F_{\ell_0}(x_1)-a|> \epsilon \}\Big)\\
 \geq m\Big(\{x\in (y_0,b_{\ell_0})\,|\,\inf_{a\in\Z}|F_{\ell_0}(x)-F_{\ell_0}(x_1)-a|> \epsilon \}\Big)-
m((y_0,b_{\ell_0})\setminus \mathcal J'_{\ell_0})\\
>\frac{b_{\ell_0}-a_{\ell_0}}{10}-\frac{b_{\ell_0}-a_{\ell_0}}{10}=0.
\end{multline*}
Thus, there is an $x_2\in (y_0,b_{\ell_0})\cap \mathcal J'_{\ell_0}$ for which \eqref{eq:ConcaveRoof} holds.\\
\qedsymbol \textit{ The proof of formula \eqref{eq:FlatRoof}.}
Observe that ($\mathcal J_\ell$.4) together with the definition of $\mathcal J_{\ell_0}'$ imply that 
$x_j,T^{h_{\ell_0}}x_j\in \mathcal I_{\ell_0}\setminus E_{\ell_0}$ for each $j\in \{1,2\}$. Thus, by the definition of $E_{\ell_0}$,
\begin{multline*}
    |e^{2\pi i F_{\ell_0}(x_2)}-e^{2\pi i F_{\ell_0}(x_1)}|=|e^{2\pi i F_{\ell_0}(x_2)}\psi(x_2)-e^{2\pi i F_{\ell_0}(x_1)}\psi(x_2)|\\
    =|e^{2\pi i F_{\ell_0}(x_2)}\psi(x_2)+\Big(-\psi(T^{h_{\ell_0}}x_1)+e^{2\pi i F_{\ell_0}(x_1)}\psi(x_1)\Big)-e^{2\pi i F_{\ell_0}(x_1)}\psi(x_2)|\\
    \leq |\psi(T^{h_{\ell_0}}x_2)-\psi(T^{h_{\ell_0}}x_1)|+|e^{2\pi i F_{\ell_0}(x_1)}\Big(\psi(x_1)-\psi(x_2)\Big)|<\frac{4}{\ell_0+1}.
\end{multline*}
Noting that $\frac{4}{\ell_0+1}<\delta$, we see that \eqref{eq:FlatRoof} holds. We are done.
\end{proof}

\newpage

\appendix

\section{Proof of Proposition~\ref{p:psi-s-star}} \label{a:limit-trajectory}

In this section, we prove Proposition~\ref{p:psi-s-star}, which is a key technical tool used in Section~\ref{s:flows2}. 
We use the notation introduced in Sections~\ref{s:flows1} and~\ref{s:flows2}. 
Let $\gamma$ be the trajectory segment from the statement of the proposition:
$$
\gamma=\{\phi_t(x_0)\,|\,t\in[0,T]\},
\qquad x_0 \in \mm \setminus \mm_*, \; T>0;
$$
it is a closed subset of $\mm \setminus \mm_*$.
As $(\psi_s)$ is defined on the open set $\mathcal M\setminus \mm_*$, for every $p\in\mathcal M\setminus\mm_*$ there is an $\varepsilon>0$ such that for every $s\in(-\varepsilon,\varepsilon)$ the point $\psi_s(p)$ is well defined.
However, as each fixed point of $(\phi_t)$ is a singularity of $(\psi_s)$, it may happen that for some $s_1>0$ the point $\psi_{s_1}(p)$ is undefined (as we will later see, in this case $\lim_{s\rightarrow s_1^-}\psi_s(p)\in\mm_*$).
Set 
\begin{equation}\label{eq:DefnS_*}
s_* = \sup \{ s > 0 \,|\, \psi_s(\phi_{[0,T]}(x_0)) \subset \mm \setminus \mm_*\}
\end{equation}
and suppose that $s_*$ is finite, otherwise we have nothing to prove.
Consider the map $F:[s,s_*)\times [0,T]\rightarrow \mathcal M$ defined by the rule 
$$
F(s, t) = \psi_s(\phi_t(x_0)).
$$

\begin{proposition}\label{B.thm:ExtensionOfF}
The function $F:[0,s_*)\times [0,T]\rightarrow\mathcal M$ has a (unique) continuous extension $\overline F:[0,s_*]\times [0,T]\rightarrow\mathcal M$. 
\end{proposition}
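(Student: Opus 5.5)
We will build the extension by showing that, for each fixed $t\in[0,T]$, the curve $s\mapsto F(s,t)$ has a limit as $s\to s_*^-$, and then that this limit is uniform in $t$, so that the limit function $\overline F(s_*,\cdot)$ is continuous and glues continuously to $F$ on $[0,s_*)\times[0,T]$. Uniqueness is automatic, since $[0,s_*)\times[0,T]$ is dense in $[0,s_*]\times[0,T]$.

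The first step is a uniform speed bound. Since $\frac{dH}{ds}=1$ along $(\psi_s)$ and $Y$ is orthogonal to $X$ with $\omega(X,Y)=1$, we have $|Y|=1/|X|$ in the Riemannian metric. This blows up near $\mm_*$, so we will instead use the time-reparametrization given by the local Hamiltonian $H$ itself. Near a regular point the orbit $s\mapsto\psi_s(p)$ is a gradient-like line of $H$ (indeed $Y=\nabla H/|\nabla H|^2$ up to the metric identification). Its Euclidean length over an $H$-increment $\Delta s$ is $\int |\nabla H|^{-1}\,dH$, which can be infinite, so length alone does not give convergence. We therefore argue by accumulation sets instead.

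Fix $t$ and let $L_t$ be the set of accumulation points of $F(s,t)$ as $s\to s_*^-$. By compactness $L_t$ is nonempty and connected. Suppose $L_t$ contains a point $q\notin\mm_*$. In a small $\phi$-flowbox around $q$ the values of $H$ along $\psi_s(p)$ increase exactly at unit rate, and the flowbox is foliated by $\psi$-segments crossing it with an $H$-increment bounded below. Hence the orbit cannot accumulate at $q$ without passing through, and it passes through only once since $s<s_*<\infty$. Moreover, the flowbox extends $\psi_s$ past $s_*$ near $q$. So if $L_t\not\subset\mm_*$, the limit exists, equals $q$, and $\psi_{s_*}(\phi_t x_0)$ is well defined. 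If instead $L_t\subset\mm_*$, then since $\mm_*$ is finite and $L_t$ is connected, $L_t$ is a single fixed point. In both cases the pointwise limit $\overline F(s_*,t)$ exists.

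The main obstacle is continuity in $t$, i.e.\ uniformity of this convergence. At points $t_0$ where $\overline F(s_*,t_0)\notin\mm_*$, continuity follows from the flowbox argument, because $\psi_{s_*}$ is defined and continuous on a neighborhood of $\phi_{t_0}(x_0)$. The delicate points are those $t_0$ mapped to a fixed point $z$. There I would work in a small ball $B_r(z)$ on which a single-valued $H$ exists with $H(z)=0$. All curves $F(s,\cdot)$ are $\phi$-trajectory segments, on which $H$ is constant equal to $H(x_0)+s$. Thus $F(s,t)$ near $z$ lies on the level set $\{H=H(x_0)+s\}$, and as $s\to s_*$ this value tends to $H(z)=0$. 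For $t$ near $t_0$ and $s$ near $s_*$ the point $F(s,t)$ stays in the part of that level set that is connected (along the trajectory) to $F(s,t_0)$, which lies close to $z$. Bounding how far along a level set one can go while remaining in $B_r(z)$ then requires the local structure of level sets near an isolated critical point, namely finitely many sectors. This is where I expect most technical effort, and I would first try to avoid it by an equicontinuity argument: $\phi_t$ is uniformly continuous, and $\psi_s\circ\phi_t=\phi_{\tau(s,t)}\circ\psi_s$ for a time change $\tau$, so a bound on trajectory time between nearby points would give equicontinuity of the family $t\mapsto F(s,t)$. Arzel\`a–Ascoli together with the pointwise limit then yields uniform convergence, and hence the continuous extension $\overline F$.
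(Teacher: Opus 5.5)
Your pointwise step is sound; it is essentially the remark the paper makes right after this proposition. Continuity at those $t_0$ with $\overline F(s_*,t_0)\notin\mm_*$ is also fine.

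\textbf{The gap.} The gap is the step you flag as the main obstacle: joint continuity of the extension at $(s_*,t_0)$ when $F(s,t_0)\to z\in\mm_*$, i.e.\ uniformity in $t$ of the convergence. Neither of your two routes is carried out.
\begin{itemize}
\item The equicontinuity route fails as stated. Write $F(s,t)=\phi_{\tau(s,t)}(F(s,0))$. There is no bound on the $\phi$-time between $F(s,t)$ and $F(s,t')$ for nearby $t,t'$. For instance, suppose the limit curve runs into a saddle $z$ along a stable separatrix. Then for every fixed $\eta>0$ the $\phi$-time from $F(s,t_0-\eta)$ to $F(s,t_0)$ tends to $+\infty$ as $s\to s_*^-$. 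This is because $|X|\le L\,d(\cdot,z)$ near $z$, so getting from distance $r$ to distance $\rho$ from $z$ takes time at least $L^{-1}\ln(r/\rho)$. Nearby points stay close despite the long time only because the orbit is slow and confined near $z$. That is a spatial trapping fact, not a time bound.
\item The sector route would need a local description of level sets at a possibly degenerate $C^\infty$ saddle. This is not available at this stage, since only finitely many fixed points and separatrices are assumed, and it is not needed.
\end{itemize}

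\textbf{The missing idea.} It is the one you discarded when you noted that the $\psi$-length can be infinite. You only need to control the length of $\psi$-orbits \emph{outside} small balls around the finitely many fixed points.

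Let $\mm_r$ be the $r$-neighbourhood of $\mm_*$. Choose $\varepsilon$ so small that $\mm_{\varepsilon/2}$ is a union of balls pairwise at least $\varepsilon/2$ apart. Put $R=\sup_{\mm\setminus\mm_{\varepsilon/4}}|v_\psi|<\infty$ and $\delta=\varepsilon/(4R)$.

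Take any $t$ and any $s_1<s_2<s_*$ with $s_2-s_1<\delta$, and let $\beta$ be the $\psi$-segment from $F(s_1,t)$ to $F(s_2,t)$.
\begin{itemize}
\item Its length outside $\mm_{\varepsilon/4}$ is less than $\varepsilon/4$.
\item Hence it meets the closure of at most one ball $B$ of radius $\varepsilon/4$. If it meets none, the two endpoints are already within $\varepsilon/4$.
\item Otherwise, cut $\beta$ at its first and last points in $B$. This gives $d(F(s_1,t),F(s_2,t))<\varepsilon/4+\varepsilon/2+\varepsilon/4=\varepsilon$.
\end{itemize}

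This Cauchy estimate is uniform in $t$. So the continuous maps $F(s,\cdot)$ converge uniformly to a continuous limit as $s\to s_*^-$, which gives $\overline F$ directly. No pointwise case analysis and no local structure of the fixed points is required.
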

\begin{proof}
For $s < s_*$, denote $\lambda_s(t) = F(s, t)$, for a fixed $s$ it is a continuous function $\lambda_s: [0, T] \mapsto \mm$.
To prove the proposition, it is enough to prove that there exists a uniform limit of those functions
\begin{equation}\label{eq:DefnOfLambda_*}
G(t)=\lim_{s\rightarrow s_*^-}\lambda_s(t).
\end{equation}
In order to check this, it is enough to prove that $\lambda_s$ is Cauchy\footnote{A standard theorem from Calculus states that a Cauchy sequence of continuous functions must uniformly converge to a continuous function. We use a version of this result where the functions have values in the metric space $\mm$, and instead of a sequence $\psi_n$ we have a parametric family $\psi_s$; the proof is almost the same.}: for every $\varepsilon>0$ there is a $\delta>0$ such that for any $t\in [0,T]$ and any $s_1,s_2\in [0,s_*)$,
\begin{equation}\label{UniformCauchyCondition}
\text{if }|s_1-s_2|<\delta,\text{ then   }d(F(s_1,t), F(s_2,t))<\varepsilon,
\end{equation}
where $d$ is the distance on $\mm$.

To show the Cauchy condition, let us take a small $\varepsilon > 0$. Set $\mm_r$ to be the set of all points in $\mm$ that are $r$-close to $\mm_*$. Denote by $v_\psi$ the vector field associated to $\psi_s$ and put     \begin{equation}\label{eq:BoundOnPsiVectorFiel}
R = \sup_{\mm \setminus \mm_{\varepsilon/4}} |v_\psi|,
\qquad
\delta = \frac{\eps}{4R}.
\end{equation}
Without loss of generality, suppose $\eps$ is small enough to ensure that  $\mm_{\eps/2}$ is a union of disjoint balls, which are at least $\eps/2$-away from each other.\\
Pick now $t\in [0, T]$ and $s_1,s_2\in [0,s_*)$ with $s_1<s_2$ and $|s_1-s_2|<\delta$. We claim that the points $p_1=F(s_1, t)$ and $p_2=F(s_2, t)$ are $\eps$-close.\\ 
Indeed, set $\beta = \{\psi_s(p_1)\,|\,s\in [0,s_2-s_1]\}$ to be the trajectory segment of $\psi_s$ connecting $p_1$ with $p_2$: indeed, $p_2=\psi_{s_2-s_1}(p_1)$. By \eqref{eq:BoundOnPsiVectorFiel}, the sum of the lengths of all the segments forming the set $\beta\setminus\mathcal M_{\varepsilon/4}$  is at most $\varepsilon/4$. We now consider the following two cases separately:
        
\noindent 1. $\beta\cap\overline{ \mathcal  M_{\varepsilon/4}}=\varnothing$. In this case, the bound on the length of $\beta$ implies that $p_1$ and $p_2$ are $\varepsilon/4$-close. 

\noindent 2. $\beta\cap\overline{ \mathcal  M_{\varepsilon/4}}\neq \varnothing$. Because any two of the balls forming $\mathcal M_{\varepsilon/4}$ are at least $\varepsilon/2$-away, the trajectory $\beta$ can only have a non-empty intersection with the closure of one of these balls; we denote the closure of this ball with the radius $\varepsilon/4$ by $B$. For $j=1,2$, we pick a "pair" $q_j\in \beta \cap B$ for the point $p_j$ in the following way: if $p_j\in B$, we set $q_j=p_j$; if $p_j\not\in B$, we take $q_j\in \beta\cap B$ to be the endpoint of the connected segment of $\beta \setminus B$ linking $p_j$ with $B$. 
We now see by applying the triangle inequality that
\[
    d(p_1,p_2)\leq d(p_1,q_1)+d(q_1,q_2)+d(q_2,p_2)
    < \varepsilon/4 + \varepsilon/2 + \varepsilon/4
    = \varepsilon,
\]
which proves the Cauchy property, and thus concludes the proof. 
\end{proof}
\begin{remark}
    Let $s_*$ and $G$ be as in formula \eqref{eq:DefnOfLambda_*}. Note that by our choice of $s_*$ there exists a $t_*\in [0,T]$ for which $\psi_{s_*}$ is not defined at the point $p=\phi_{t_*}(x_0)$. We claim that for any such $t_*$, $G(t_*)\in\mm_*$. Indeed, suppose for contradiction that $G(t_*)\not\in\mm_*$ and let $U\subset\mm\setminus \mm_*$ denote a $\psi_s$-flow box around $G(t_*)$. Noting that $\lim_{s\rightarrow s_*^-}\psi_s(p)=G(t_*)$ can hold only if the $\psi_s$-trajectory $\{\psi_s(p)\,|\,s\in [0,s_*)\}$ stops somewhere inside of $U$, we see by the continuity of $\psi_s$ in $U$ that $G(t_*)=\psi_{s_*}(p)$. A contradiction. 
\end{remark}

Let us now deduce Proposition~\ref{p:psi-s-star} from the continuity of $\overline F$. As $\gamma \subset \mm \setminus \mm_*$,
injectivity of $\phi_t: [0, T] \mapsto \gamma$ can only fail when $\gamma$ is a periodic orbit and $T$ is greater or equal to its period.  
Then we can change $T$ to be equal to the period, this will not change $\gamma$, but the injectivity assumption will be almost achieved with one exception $\phi_0(x_0) = \phi_T(x_0)$. We continue this appendix with a proof of Proposition~\ref{p:psi-s-star} for the case when injectivity fully holds. The periodic case can be easily reduced to the injective case -- for example, by dividing $\gamma$ into two halves where injectivity holds.

As $\phi_t: [0, T] \mapsto \gamma$ is a bijection, for any $x \in \gamma$ there is a unique $t(x)$ with $\phi_{t(x)}(x_0)=x$.
We can then set $\overline \psi_{s}(x) := \overline F(s, t(x))$, where $s \in [0, s_*]$ and $x \in \gamma$.
We then have a continuous map 
\[
\overline \psi_s(x) : [0, s_*] \times \gamma \mapsto \mm
\]
that coincides with $\psi_s$ when $s < s_*$.
Set $\gamma_* \subset \gamma$ to be the set of all $x$ with $\overline \psi_{s_*}(x) \in \mm_*$. 
Then, $\overline \psi_{s_*}$ coincides with $\psi_{s_*}$ on $\gamma \setminus \gamma_*$.
Note that $\gamma_*$ is not empty by the choice of $s_*$.

\begin{lemma} \label{l:very-obvious-lemma}
Let $\alpha$ be an open piece of $\gamma$ that lies in $\gamma \setminus \gamma_*$: $\alpha=\phi_{(a, b)}(x_0) \subset \gamma \setminus \gamma_*$. Suppose that $\alpha$ ends at a point $x_* = \phi_b(x_0)$ such that $x_*\in \gamma_*$.
Then, $\overline \psi_{s_*}(\alpha)$ is the final segment of a stable separatrix of a saddle, and this saddle is $\overline \psi_{s_*}(x_*)$.
\end{lemma}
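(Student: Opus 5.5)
We need to show that $\overline\psi_{s_*}(\alpha)$ is a trajectory piece ending at the fixed point $p_*:=\overline\psi_{s_*}(x_*)\in\mm_*$, and that $p_*$ is a saddle. The plan has three steps.

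\textbf{Step 1: $\overline\psi_{s_*}(\alpha)$ is a trajectory segment.} On $\alpha\subset\gamma\setminus\gamma_*$ the map $\overline\psi_{s_*}$ coincides with $\psi_{s_*}$, which is defined there. For every compact subarc $\phi_{[a',b']}(x_0)\subset\alpha$ I apply the remark following the definition of $(\psi_s)$: the image of a trajectory segment under $\psi_s$, when defined, is again a trajectory segment. This uses that $\psi_s$ commutes with the trajectory structure, since $dH/ds=1$ moves level sets to level sets. Hence $\beta:=\overline\psi_{s_*}(\alpha)$ is a connected piece of a single $(\phi_t)$-trajectory, contained in $\mm\setminus\mm_*$. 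Moreover, $H$ is constant along it: locally $H(\psi_{s_*}x)=H(x)+s_*$.

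\textbf{Step 2: $\beta$ has infinite forward time.} By continuity of $\overline F$ (Proposition~\ref{B.thm:ExtensionOfF}), $\overline\psi_{s_*}(\phi_t(x_0))\to p_*$ as $t\to b^-$, so $\beta$ accumulates at the fixed point $p_*$. Parametrize $\beta$ by the flow time, $\beta=\phi_{[0,\tau)}(y)$, oriented in the direction of $\phi_t$. The key point is orientation: $\psi_s$ maps $\phi$-trajectories to $\phi$-trajectories preserving the time direction, because $X$ is transported by $\psi_s$ up to a positive factor for $s<s_*$, and this persists in the limit along $\alpha$. Thus the endpoint $x_*$ corresponds to the forward end of $\beta$.

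Next I rule out $\tau<\infty$. If $\tau$ were finite, then $\phi_\tau(y)$ would be a point of the trajectory, hence not fixed; but the forward limit of $\beta$ is $p_*\in\mm_*$, and a trajectory cannot reach a fixed point in finite time. Therefore $\tau=\infty$ and $\lim_{t\to\infty}\phi_t(y)=p_*$. This says exactly that $\beta$ is the final segment of a stable separatrix ending at $p_*$.

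\textbf{Step 3: $p_*$ is a saddle.} This follows from the observation in the paper that a stable separatrix can only end at a saddle. The reason is that a fixed point where $H$ has a local extremum is surrounded by closed level curves, which are periodic orbits, so no non-constant trajectory tends to it.

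\textbf{Main obstacle.} The delicate part is the orientation and continuity claim in Step 2: that the time direction along $\beta$ matches that of $\alpha$, and that $\beta$ converges to $p_*$ at its forward end rather than oscillating. I would handle this by working in a small ball around $p_*$. There, $H(\beta)=H(p_*)$ is the critical value, since $H$ is continuous and $\beta$ accumulates at $p_*$. Hence $\beta$ lies in the critical level set near $p_*$, which consists of finitely many separatrices by the standing finiteness assumption. A connected trajectory piece in that set accumulating at $p_*$ must be one of these separatrix arcs, which yields monotone convergence.
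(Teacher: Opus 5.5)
Your proof is correct and follows the paper's route: $\psi_{s_*}$ maps compact subarcs of $\alpha$ to trajectory segments, so $\overline\psi_{s_*}(\alpha)$ is a connected piece of a single trajectory; continuity of $\overline F$ makes it converge at the $x_*$ end to the fixed point $\overline\psi_{s_*}(x_*)$; and a stable separatrix can only end at a saddle. Your orientation check and your exclusion of finite arrival time make explicit what the paper leaves implicit (the orientation check is valid because $dH(D\psi_s X)=d(H\circ\psi_s)(X)=dH(X)=0$, so $D\psi_s X=\lambda X$ with $\lambda>0$ by continuity from $s=0$), whereas your final ``main obstacle'' paragraph is superfluous: convergence already follows from continuity of $\overline F$, and the claim that the critical level set near $p_*$ consists only of separatrices would need an argument beyond the finiteness assumption.
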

\begin{proof}
We have $\overline \psi_{s_*}(\alpha) = \psi_{s_*}(\alpha)$. 
As $\frac{dH}{ds} = 1$, the map $\psi_s$ maps small pieces of trajectories of $(\varphi_t)$ into small pieces of trajectories of $(\varphi_t)$. If we have a closed trajectory piece $\beta = \varphi_{[c, d]}(x)$ such that both $\beta$ and $\psi_{s_*}(\beta)$ lie in $\mm \setminus \mm_*$, a compactness argument shows that $\psi_{s_*}(\beta)$ is a trajectory segment. As any two points on $\alpha$ can be connected by a closed trajectory segment, this means that $\psi_{s_*}(\alpha)$ is an open and connected piece of a single trajectory. 
As $\overline \psi_s$ is continuous, we have \[
\lim_{x \to x_*, \; x \in \alpha} \psi_{s_*}(x) = 
\lim_{x \to x_*, \; x \in \alpha} \overline \psi_{s_*}(x) = 
\overline \psi_{s_*}(x_*) \in \mm_*.
\]
Hence, $\overline \psi_{s_*}(\alpha)$ is the final segment of a stable separatrix, and $\overline \psi_{s_*}(x_*)$ is therefore a saddle. 
\end{proof}

\begin{corollary}
$\overline \psi_{s_*}(\gamma)$ is a fixed point or a separatrix path.    
\end{corollary}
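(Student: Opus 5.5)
The plan is to decompose $\gamma$ according to the closed set $\gamma_*$ and read off the structure of $\overline\psi_{s_*}(\gamma)$ piece by piece. Since $\gamma_*\subset\gamma$ is closed and nonempty, $\gamma\setminus\gamma_*$ is a union of relatively open arcs $\alpha$. Continuity of $\overline\psi_{s_*}$ together with $\overline\psi_{s_*}(\gamma_*)\subset\mm_*$ shows that $\overline\psi_{s_*}$ maps each connected component of $\gamma_*$ to a single fixed point, because a connected set is mapped to a connected subset of the finite set $\mm_*$. The first step is to show there are only finitely many such components, equivalently finitely many arcs $\alpha$.

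For the finiteness step, take any arc $\alpha=\phi_{(a,b)}(x_0)$ of $\gamma\setminus\gamma_*$ with both endpoints in $\gamma_*$. By Lemma~\ref{l:very-obvious-lemma} and its time-reversed analogue, $\overline\psi_{s_*}(\alpha)$ is a whole trajectory that ends at a saddle and begins at a saddle, so it is a separatrix connection. Distinct arcs of $\gamma$ map to disjoint subsets of $\mm\setminus\mm_*$, since $\psi_{s_*}$ is injective where it is defined, and an open piece of $\gamma$ can only be mapped onto an open piece of a trajectory. Hence distinct interior arcs give distinct separatrix connections. There are only finitely many separatrices, so there are finitely many such arcs. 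The two extreme arcs, which contain $x_0$ or $\phi_T(x_0)$ when these lie outside $\gamma_*$, are handled by the same lemma applied only at their one endpoint in $\gamma_*$. Each is mapped onto a closed piece of a stable separatrix $\phi_{[0,\infty)}(a')$ or of an unstable separatrix $\phi_{(-\infty,0]}(b')$, where $a'=\psi_{s_*}(x_0)$ and $b'=\psi_{s_*}(\phi_T(x_0))$.

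With finiteness in hand, $\gamma$ becomes an ordered concatenation: alternating components of $\gamma_*$ and arcs of $\gamma\setminus\gamma_*$, ordered along the time parameter. Mapping them in order gives an alternating chain. It runs from a starting point $a'$, or a saddle if $x_0\in\gamma_*$, through a stable piece into a saddle $s_1$, then along connections $s_1\to s_2\to\cdots$, and finally along an unstable piece to $b'$. The chain is consistent at each component of $\gamma_*$, because Lemma~\ref{l:very-obvious-lemma} identifies the saddle at which the incoming arc ends and the outgoing arc begins as the same point $\overline\psi_{s_*}(\text{component})$. This is exactly a separatrix path in the sense of Section~\ref{s:orthogonal}. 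In the degenerate case $\gamma_*=\gamma$, the whole image is connected and lies in $\mm_*$, so it is a single fixed point.

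The main obstacle is the injectivity and openness claim in the finiteness step. One must ensure that two different arcs cannot map into the same separatrix connection, for instance overlapping pieces of it. This follows because $\psi_{s_*}$ is a restriction of the time-$s_*$ map of a flow on an open set, hence injective, and because the image of an arc with both endpoints in $\gamma_*$ is a full trajectory. So two such arcs with images on the same trajectory would have equal images, contradicting injectivity. I would also check that the Hausdorff limit statement of Proposition~\ref{p:psi-s-star} follows from the uniform convergence established in Proposition~\ref{B.thm:ExtensionOfF}.
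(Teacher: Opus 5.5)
Your proposal is correct and follows the paper's proof essentially step for step. Arcs of $\gamma\setminus\gamma_*$ bounded on both sides by $\gamma_*$ become distinct separatrix connections via Lemma~\ref{l:very-obvious-lemma} and injectivity of $\psi_{s_*}$, finiteness of separatrices then bounds their number, components of $\gamma_*$ collapse to single fixed points (saddles when adjacent to an arc), and concatenation gives a separatrix path or a single fixed point. Your added justification that distinct arcs cannot share a connection, because each image is the whole connection, makes explicit a step the paper leaves implicit.
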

\begin{proof}
The open set $\gamma \setminus \gamma_*$ is a union of at most countably many open trajectory pieces, which we denote by $\alpha_i$.
Each $\alpha_i$ except at most two of them is bounded by two points of $\gamma_*$. If this is the case, by Lemma~\ref{l:very-obvious-lemma} (and its symmetric version with inverse time $t$) the set $\overline \psi_{s_*}(\alpha_i)$ is a full separatrix connection between saddles. As $\psi_{s_*}$ is injective on $\gamma \setminus \gamma_*$, different $\alpha_i$ must be mapped into different separatrix connections. As there are only finitely many separatrices, we deduce that $\{\alpha_i\}$ is actually a finite set. 
Then, $\gamma_*$ is a finite union of closed intervals (possibly consisting of a single point) that we call $\beta_j$.
As $\overline \psi_{s_*}(\gamma_*) \subset \mm_*$, for each $j$ the set $\overline \psi_{s_*}(\beta_j)$ is a single point by continuity, and this point is a fixed point of $(\phi_t)$.
If $\gamma_* = \gamma$, then $\overline \psi_{s_*}(\gamma)$ is a single fixed point. If $\gamma_* \ne \gamma$, then each interval $\beta_j$ has an adjacent interval $\alpha_i$, and thus all $\beta_j$ must be mapped into saddles of $(\phi_t)$.
The intervals $\alpha_i$ between them are mapped into separatrices connecting them, thus forming a separatrix path.     
\end{proof}

\noindent The corollary above implies Proposition~\ref{p:psi-s-star}, because when $s \to s_*^{-}$, we have $\psi_s(\gamma) = \overline \psi_s(\gamma) \to \overline \psi_{s_*}(\gamma)$ by the continuity of $\overline \psi_s$.

\section{Proof of Lemma~\ref{l:integral-computation}} \label{s:integrals-puiseux}

We first need to prove the following auxiliary lemma.
\begin{lemma} \label{l:integral-auxiliary}
    Let $f(x, y)$ be an analytic function, and let $a(x) < b(x)$ be analytic functions defined in a neighborhood of zero. 
    Then for any integer $n>0$ the integral
    \[
    I(h) =  \int_{\{0 < x < h^\frac 1 n, \; a(x) < y < b(x)\}} f(x, y) \; dx dy
    \]
    is given on $(0, h_0)$ by a converging \lp series, where $h_0 > 0$ is small enough.
\end{lemma}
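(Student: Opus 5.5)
Since $a(x)<b(x)$ and all data are analytic near the origin, I will first integrate out $y$ exactly and reduce to a one-variable integral. Set
\[
g(x)=\int_{a(x)}^{b(x)} f(x,y)\,dy .
\]
If $F(x,y)$ is an analytic antiderivative of $f$ in $y$ (e.g.\ $F(x,y)=\int_0^y f(x,u)\,du$, analytic near the origin), then $g(x)=F(x,b(x))-F(x,a(x))$. This is a difference of compositions of analytic functions, so $g$ is real analytic on a neighbourhood $(-r,r)$ of $0$. Fubini then gives, for $h^{1/n}<r$,
\[
I(h)=\int_0^{h^{1/n}} g(x)\,dx .
\]

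Next I expand $g(x)=\sum_{k\ge0} g_k x^k$, which converges absolutely and uniformly on $[0,\rho]$ for any $\rho<r$. Integrating term by term on $[0,h^{1/n}]$ is allowed by uniform convergence and gives
\[
I(h)=\sum_{k\ge0}\frac{g_k}{k+1}\,h^{(k+1)/n}.
\]
This is a log-Puiseux series in the sense of Definition~\ref{d:lp}, with denominator $n$, $k_0=1$, all $b_k=0$, and $a_{j}=g_{j-1}/j$. It converges for $h\in(0,h_0)$ whenever $h_0^{1/n}<r$, because its radius of convergence in the variable $h^{1/n}$ is at least that of $g$. So the lemma in fact produces an honest Puiseux series; logarithms do not appear at this stage and only arise later in Lemma~\ref{l:integral-computation}, from integrating monomials against $x^ny^m<h$.

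The only point that needs care is the analyticity of $g$, and with it the choice of $h_0$. The difficulty is that $f$ is analytic only near the origin, while the region $a(x)<y<b(x)$ must stay inside its domain. I will handle this by assuming, as is implicit in the statement, that $f$ is analytic on a neighbourhood of the compact set $\{(x,y): 0\le x\le \rho,\ a(x)\le y\le b(x)\}$ for some $\rho>0$. With that in place, $F$ is analytic on a complex neighbourhood of this set, $g$ extends holomorphically to a complex disc around $[0,\rho]$, and $h_0$ is chosen so that $h_0^{1/n}$ is smaller than the radius of the power series of $g$ at $0$. Everything else in the argument is routine.
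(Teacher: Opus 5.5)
Your proof is correct and is essentially the paper's argument: the paper also integrates out $y$ using an analytic antiderivative $F$ of $f$ in $y$, then writes $I(h)=G(h^{1/n})-G(0)$ for an analytic antiderivative $G$ of $x\mapsto F(x,b(x))-F(x,a(x))$, which is the same as your term-by-term integration of the Taylor series of $g$. Your remark that $f$ must be analytic on a neighbourhood of the region $\{0\le x\le\rho,\ a(x)\le y\le b(x)\}$ makes explicit an assumption the paper leaves implicit.
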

\begin{proof}
Take any analytic antiderivative $F(x, y)$ of $f$ w.r.t. $y$: $\frac{\partial}{\partial y}F = f$.
Integrating over $y$ gives
\[
I(h) = \int_0^{h^{1/n}} F(b(x)) - F(a(x)) \; dx.
\]
As the integrand is analytic, it has an analytic antiderivative $G(x)$, and so $I(h) = G(h^{1/n}) - G(0)$ is a \lp series (without any logarithms).
\end{proof}

\begin{proof}[Proof of Lemma~\ref{l:integral-computation}]
The case $m=0$ is covered by Lemma~\ref{l:integral-auxiliary} by taking $a(x) = 0$ and $b(x) = Y(x)$; the inequality $x < X(y)$ is redundant for small enough $h$. So we now assume $m > 0$. Take $\delta > 0$ that is less then $Y(0)$ and $X(0)$. Then for small enough $h$ the domain of integration
\[
x^n y^m < h, \; 0 < x < X(y), \; 0 < y < Y(x)
\]
splits into three parts:
\begin{enumerate}
    \item $x^n y^m < h, \; \delta \le y < Y(x), \; 0 < x < \delta$,
    \item $x^n y^m < h, \; \delta \le x < X(y), \; 0 < y < \delta$, 
    \item $x^n y^m < h, \; 0 < x, y < \delta$.
\end{enumerate}
The integral of $f$ over the first part can be reduced to Lemma~\ref{l:integral-auxiliary} by the coordinate change $x_1 = y^{m/n}x$, $y_1 = y$, which is analytic because $y \ge \delta$ and transforms the inequality $x^n y^m < h$ into $x_1^n < h$. Similarly, the integral over the second part can be dealt with by the coordinate change $x_1 = x$, $y_1 = x^{n/m}y$. It remains to consider the integral $I_3$ over the third part, for which we apply the linear scaling $x = \delta \tilde x$, $y = \delta \tilde y$: 
\[
I_3(h) = \int_{\{x^n y^m < h, \; 0 < x, y < \delta\}} f(x, y) \; dx dy
= \delta^2 \int_{\{\tilde x^n \tilde y^m < \tilde h, \; 0 < \tilde x, \tilde y < 1\}} g(\tilde x, \tilde y) \; d\tilde x d \tilde y,
\]
where $\tilde h = \delta^{-n-m} h$ and $g(\tilde x, \tilde y) = f(\delta \tilde x, \delta \tilde y)$.

Decomposing $g(\tilde x, \tilde y) = \sum_{k, l} \alpha_{k, l} \tilde x^k \tilde y^l$ into Taylor series\footnote{If the series does not converge on the whole domain, we decrease $\delta$}, we get $I_3 = \delta^2 \sum_{k, l} \alpha_{k, l} B_{k, l}$ with 
\[
B_{k, l}(\tilde h) = \int_{\{\tilde x^n \tilde y^m < \tilde h, \; 0 < \tilde x, \tilde y < 1\}} \tilde x^k \tilde y^l \; d\tilde x d\tilde y.
\]
Let $\tilde x_*(\tilde h, \tilde y) = \tilde h^{\frac 1 n}\tilde y^{- \frac m n}$ denote the solution of $\tilde x_*^n \tilde y^m = \tilde h$ and $\tilde y_*(\tilde h) = \tilde h^{\frac 1 m}$ denote the solution of $\tilde x_*(\tilde h, \tilde y_*) = 1$.
We have
\begin{align*}
\begin{split}
B_{k, l}(\tilde h) &= 
\int_{0 < \tilde x, \tilde y < 1, \; \tilde x^n \tilde y^m < \tilde h} \tilde x^k \tilde y^l \; d\tilde x d\tilde y
= \tilde y_* + \int_{\tilde y_*}^1 \left( \tilde y^l \int_0^{\tilde x_*} \tilde x^k  \; d\tilde x \right) d\tilde y \\
& = \tilde h^{\frac 1 m} + \frac{1}{k+1} \int_{\tilde y_*}^1 \tilde y^l \tilde h^{\frac {k+1}{n}} \tilde y^{-\frac{m(k+1)}{n}} d\tilde y\\
& = \tilde h^{\frac 1 m} +  
\frac{1}{(k+1)(l+1-\frac{m(k+1)}{n})} 
\tilde h^{\frac {k+1}{n}} (1 - \tilde y_*^{l+1-\frac{m(k+1)}{n}}) \\
& = \tilde h^{\frac 1 m}
+\frac{1}{(k+1)(l+1-\frac{m(k+1)}{n})} \left(
    \tilde h^{\frac {k+1}{n}} - \tilde h^{\frac{l+1}{m}}
\right),  
\end{split}    
\end{align*}
assuming $\frac {k+1} n \ne \frac{l+1} m$. If $\frac {k+1} n = \frac{l+1} m$, we get
\[
\tilde J_{k, l} = \tilde h^{\frac 1 m} + \frac{1}{m(k+1)}\tilde h^{\frac{k+1}n} |\ln \tilde h|.
\]
Hence, $I_3(h)$ is given by a convergent \lp series.
\end{proof}

\textit{Acknowledgments.} The authors would like to thank Bassam Fayad for several discussions on the topic.
Rigoberto Zelada is supported by EPSRC through Joel Moreira's Frontier Research Guarantee grant, ref. EP/Y014030/1 and would like to thank the Brin Mathematics Research Center for partially funding his visit to the Universisty of Maryland on  2025  when a large portion of the  current paper was completed.

Adam Kanigowski\\
\textsc{Department of Mathematics,  University of Maryland, College Park, MD 20742, USA} and  \textsc{Faculty of Mathematics and Computer Science, Jagiellonian University, Lojasiewicza 6, Krakow, Poland}\par\nopagebreak
\noindent
\href{mailto:akanigow@umd.edu}
{\texttt{akanigow@umd.edu}}

\medskip 

\noindent
 Alexey Okunev\\
\textsc{Department of Mathematics,  University of Maryland, College Park, MD 20742, USA}\par\nopagebreak
\noindent
\href{mailto:aokunev@umd.edu}
{\texttt{aokunev@umd.edu}}

\medskip

\noindent
Rigoberto Zelada\\
\textsc{Mathematics Institute. University of Warwick, Coventry, CV4 7AL, UK}\par\nopagebreak
\noindent
\href{mailto:rigoberto.zelada@warwick.ac.uk}
{\texttt{rigoberto.zelada@warwick.ac.uk}}


\begin{thebibliography}{99}


\bibitem{AGV} V.\ Arnold, S.\ Gusein-Zade, A.\ Varchenko,
{\em Singularities of Differentiable Maps: Volume II Monodromy and Asymptotic Integrals}, Monographs in Mathematics 83, Birkhauser (1988).

\bibitem{Ati} M. Atiyah, {\em Resolution of singularities and division of distributions}, Communications on pure and applied mathematics 23.2 (1970): 145-150.
%
 \bibitem{AF} A. Avila, G. Forni, {\em Weak mixing for interval exchange transformations and translation flows}, Ann. of Math. 165 (2007), 637--664.
\bibitem{ChFKU} J.\ Chaika, K.\ Fraczek, A.\ Kanigowski, C.\ Ulcigrai,
{\em Singularity of the spectrum for smooth area-preserving flows in genus two and translation surfaces well approximated by cylinders}, Comm. Math. Phys. 381 (2021), no.3, 1369–1407.
%
\bibitem{ChW} J.\ Chaika, A.\ Wright, {\em A smooth mixing flow on a surface with non-degenerate fixed points}, J. Amer. Math. Soc. 32 (2019), 81--117.
%
\bibitem{CFr} J-P. Conze, K. Fraczek,
{\em Cocycles over interval exchange transformations and multivalued Hamiltonian flows}, Adv. Math. 226 (2011), no. 5, 4373--4428.
%
\bibitem{CornfeldErgodicTheory}
 I. P. Cornfeld, S. V.\ Fomin,  Ya. G.\ Sina\u i,
{\em Ergodic theory}, Grundlehren der mathematischen Wissenschaften ,
    245, (1982), Springer-Verlag, New York. Translated from the Russian by A. B. Sosinski\u i
%
 \bibitem{FKZ} B. Fayad, A. Kanigowski, R. Zelada, {\em A non-mixing Arnold flow on a surface}, Adv. Math. 465 (2025), 1-54
%
\bibitem{FFK} B. Fayad, G. Forni, A. Kanigowski, {\em Lebesgue spectrum of countable multiplicity for conservative flows on
the torus}, J. Am. Math. Soc. 34 (2021), 747--813.
%
\bibitem{FaKa} B. Fayad and A. Kanigowski, {\em On multiple mixing for a class of conservative surface flows},
Inv. Math. 203 (2) (2016), 555--614.
%
\bibitem{FKU}K. Fraczek, A. Kanigowski, C. Ulcigrai,  {\em Singularity of the spectrum of typical minimal smooth area-preserving flows in any genus}, arXiv:2505.13193
%
\bibitem{Green} M. Greenblatt, {\em Resolution of singularities, asymptotic expansions of integrals and related phenomena}, Journal d'Analyse Mathématique 111.1 (2010): 221-245.
%
\bibitem{Hir} H. Hironaka, {\em Resolution of singularities of an algebraic variety over a field of characteristic zero}, Ann.
of Math., Vol. 79, 1964, pp. 109-326.
%
\bibitem{Jean} P. Jeanquartier, {\em Developpement asymptotique de la distribution de Dirac attache une fonction analytique}, C. R. Acad. Sci. Paris, Ser. A-B 201 (1970), A1159–A1161.
%
\bibitem{KLU} A. Kanigowski, M. Lema\'nczyk, C. Ulcigrai, {\em On disjointness properties of some parabolic flows}, Invent. Math.
221 (2020), 1-111.
%
\bibitem{Kat} A.\ Katok, {\em Interval exchange transformations and some special flows are not mixing}, Israel J. Math. 35 (1980), 301--310.
%
\bibitem{Kat73} A. Katok, {\em Invariant measures of flows on oriented surfaces}, Doklady Akademii Nauk, Russian Academy of Sciences, 211 4 (1973), 775--778.
%
\bibitem{Koch1} A. V. Kochergin, {\em Mixing in special flows over a shifting of segments and in smooth flows on surfaces},
Mat. Sb., 96 138 (1975), 471--502.
 \bibitem{Koch2} A. V. Kochergin, {\em Non-degenerate fixed points and mixing in flows on a 2-torus}, Matematicheskii
Sbornik, 1948, (2003), 83-112 (Translated in: Sb. Math., 194:1195-1224).
%
\bibitem{Kocc}A.\ Kochergin, {\em Well-aproximable angles and mixing for flows on $\T^2$ with nonsingular fixed points},
 Electronic research announcements of the American Mathematical Society, Vol. 10, 113--121 (October 26, 2004)
 \bibitem{Kolg} A. N. Kolmogorov, {\em On dynamical systems with an integral invariant on the torus}, Doklady Akad. Nauk SSSR 93 (1953), 763--766
\bibitem{Lev} G. Levitt, {\em Feuillettages des surfaces}, thesis, 1983.
%
\bibitem{Loes} F. Loeser, {\em Volume de tubes autour de singularities}, Duke Math. J. 53 (1986), 443–455.
%
\bibitem{Mai}A. Mayer, {\em Trajectories on the closed orientable surfaces}, Rec. Math. [Mat. Sbornik] N.S., 12(54) (1943), 71--84.
\bibitem{NZ} I.\ Nikolaev,  E.\ Zhuzhoma, {\em Flows on 2-dimensional manifolds}, Lecture Notes in Mathematics, 1705, (1999), Springer-Verlag, New York.
\bibitem{Ravotti} D. Ravotti, {\em Quantitative mixing for locally Hamiltonian flows with saddle loops on compact surfaces}, Ann.
H. Poincar\'e 18 (2017), 3815--3861.
%
\bibitem{KS} Ya.G. Sinai, K.M. Khanin, {\em Mixing for some classes of special flows over rotations of the circle}, Funktsionalnyi Analiz i Ego Prilozheniya, 26 3 (1992):1 21 (Translated in: Functional Analysis and its Applications, 26:3:155-169, 1992).
%
\bibitem{Sklo} M. D. Shklover, {\em On dynamical systems on the torus with continuous spectrum}, Izv. Vuzov 10 (1967), 113--124.
%
\bibitem{Ulc2} C. Ulcigrai, Mixing of asymmetric logarithmic suspension flows over interval exchange transformations, Ergod. Th. Dyn. Sys. 27 (2007), 991--1035.
%
\bibitem{Ulc} C.\ Ulcigrai, {\em Weak mixing for logarithmic flows over interval exchange transformations}, J.
Mod. Dynam. 3 (2009),35--49.

\bibitem{Ulc1} C. Ulcigrai, {\em Absence of mixing in area-preserving flows on surfaces}, (2011) Annals of Mathematics 173 (2011),1743--1778

%
\bibitem{viana2006} M. Viana, {\em Ergodic theory of interval exchange maps},  Revista Matem{\'a}tica Complutense 19(1) (2006), 7--100.
%
\bibitem{Zor94} A. Zorich, {\em Hamiltonian flows of multivalued hamiltonians on closed orientable surfaces}, Preprint, Max-Planck Institut fur Mathematik, Bonn (1994)
%
\bibitem{Zor} A. Zorich, {\em How do the leaves of a closed 1-form wind around a surface?}, in Pseudoperiodic topology, vol. 197 of Amer.Math. Soc. Transl. Ser. 2, Amer. Math. Soc., Providence, RI, 1999,135--178.
\end{thebibliography}
\end{document}